\numberwithin{equation}{section}
\newtheorem{Thm}[equation]{Theorem}
\newtheorem{Prop}[equation]{Proposition}
\newtheorem{Lem}[equation]{Lemma}
\theoremstyle{definition}
\newtheorem{Def}[equation]{Definition}
\newtheorem{Exa}[equation]{Example}
\newtheorem{Rmk}[equation]{Remark}
\begin{document}

\title [Weyl Group Multiple Dirichlet Series]
{Weyl Group Multiple Dirichlet Series \\ for Symmetrizable Kac-Moody Root Systems}
\author[Kyu-Hwan Lee]{Kyu-Hwan Lee}
\address{Department of
Mathematics, University of Connecticut, Storrs, CT 06269, U.S.A.}
\email{khlee@math.uconn.edu}
\author[Yichao Zhang]{Yichao Zhang}
\address{Department of
Mathematics, University of Connecticut, Storrs, CT 06269, U.S.A.}
\email{yichao.zhang@uconn.edu}

\begin{abstract}
Weyl group multiple Dirichlet series, introduced by Brubaker, Bump, Chinta, Friedberg and Hoffstein, are expected to be Whittaker coefficients of Eisenstein series on metaplectic groups. Chinta and Gunnells constructed these multiple Dirichlet series for all the finite root systems using the method of averaging a Weyl group action on the field of rational functions.
In this paper, we generalize Chinta and Gunnells' work and construct Weyl group multiple Dirichlet series for the root systems associated with symmetrizable Kac-Moody algebras, and establish their functional equations and meromorphic continuation.
\end{abstract}

\maketitle

\section*{Introduction}

Weyl group multiple Dirichlet series were introduced in the paper \cite{BBCFH} by Brubaker, Bump, Chinta, Friedberg and Hoffstein, and have been studied in their subsequent works \cite{BBF, BBFbk, BBFH}.  These multiple Dirichlet series
are defined for a finite root system $\Phi$ of rank $r$ and a number field $F$ containing the $2n$-th roots of unity, and unify many examples in number theory that have been studied previously on a case-by-case basis, and are expected to be Whittaker coefficients of Eisenstein series on metaplectic groups. This expectation is now called {\em Eisenstein conjecture}, and the conjecture has been proven for the root systems of type $A_r$  \cite{BBFcr}. Moreover, the theory of Weyl group multiple Dirichlet series can be applied to the moments problem of the Riemann zeta function and Dirichlet $L$-functions. A nice survey on this subject can be found in \cite{Bump}.

It is remarkable that there are two distinct constructions of these multiple Dirichlet series. In the work of Brubaker, Bump and Friedberg \cite{BBFbk, BBFcr}, the local coefficients are defined using the data from Kashiwara's crystal graph \cite{Kash}. More precisely, each local coefficient is given as a sum of $G(v)$ over the crystals $v$ of the same weight, where $G(v)$ is a product of Gauss sums that are determined by the crystal graph. The construction has been completed for most of the classical root systems.

The second construction is due to Chinta and Gunnells \cite{CG, CGco}. They defined a new Weyl group action on the field of rational functions in several variables. The action involves Gauss sums and works for all the finite root systems in a coherent way. By taking an average of the Weyl group action, they obtained a deformed Weyl character and defined the local coefficients  using the deformed character.
The two constructions are expected to be equivalent, and the equivalence in type $A$ has been established \cite{BBFcr, CO, McN} .

Since Kac-Moody root systems naturally generalize finite root systems, it was pointed out by Brubaker, Bump and Friedberg \cite{BBFbk, BBFcr} that multiple Dirichlet series can be constructed for any symmetrizable Kac-Moody root systems.
Indeed, in this paper, we generalize  Chinta and Gunnells' construction to the root systems associated with symmetrizable Kac-Moody algebras. So Weyl groups are infinite, in general, and a root can have multiplicity bigger than one. Still, the results of this paper show that the Kac-Moody multiple Dirichlet series have standard properties: absolute convergence, functional equations and meromorphic continuation. 

Nevertheless, the construction in the Kac-Moody case requires new ideas. To generalize the action of the Weyl group, we have to separate the imaginary roots from the real roots and work on a certain sublattice of the root lattice. Although it may be that the field of Laurent series is more natural for the space of the action, it is easily seen that it is not closed under the action. So it is necessary to consider the space of formal distributions. The issue of convergence tells us that the whole space of formal distributions is not closed under the action either. By carefully choosing a subspace of the space of formal distributions, we define the Weyl group action and generalize Chinta and Gunnells's definition in the finite case. This enables us to define the average function of the Weyl group action,  and we use the function to define the local coefficients of our Dirichlet series. Not suprisingly, the issue of absolute convergence arises and becomes crucial at other places. Several lemmas are proved to resolve the absolute convergence at those places.  Finally,  we provide the details in the proof of the meromorphic continuation and functional equations of our multiple Dirichlet series. In particular, we show how to obtain an overlap between regions of analyticity before we apply Bochner's Theorem. It turns out that the continuation is not to all of $\mathfrak h$ but to a convex subcone in the {\em Tits cone} as a consequence of geometric properties of the action of the Weyl group on the Cartan subalgebra. 

It is expected that our multiple Dirichlet series would be related to a Whittaker function up to a normalizing factor in the affine case. However, in the indefinite case, the contribution coming from the imaginary roots can be much more complicated for a Whittaker function, and it is beyond our comprehension at the present. 
 
 Similarly, the multiple Dirichlet series considered in this paper seems to be relevant to the moment problem in the affine case $D_4^{(1)}$. In a recent paper \cite{BD}, Bucur and Diaconu considered the fourth moment of quadratic Dirichlet $L$-functions over rational function fields. They constructed a multiple Dirichlet series, where the group of functional equations is the affine Weyl group $D_4^{(1)}$. They adopted the same averaging method as in this paper, i.e. Chinta and Gunnells's method, and used a deformation of Weyl-Kac denominator function. One can see the similarity between their construction and ours. (See the remark at the end of this paper for more details.) 

This paper consists of five sections. In the first section, we fix notations for Kac-Moody root systems and for Hilbert symbols, power residue symbols and Gauss sums. In Section 2, we show that the Weyl group action on the field of rational functions, defined by Chinta and Gunnells, extends to the case of Kac-Moody root systems and to the set of formal distributions, and we consider a deformed Weyl-Kac character. In Section 3, the local coefficients  of the multiple Dirichlet series are defined using the deformed character, and some estimations for the size of the local coefficients are made. In the next section, we review results and computations in the rank one case and make preparation for a proof of functional equations. In the last section, we collect the results of the previous sections, define Weyl group multiple Dirichlet series from the local coefficients via twisted multiplicativity, and prove functional equations and meromorphic continuation of the multiple Dirichlet series.

\subsection*{Acknowledgments}
The authors are grateful to B. Brubaker, A. Bucur, D. Bump, G. Chinta, A. Diaconu, S. Friedberg,  and P. Gunnells for helpful discussions and suggestions.

\section{Preliminaries}

We refer the reader to \cite{Kac} for a basic theory of Kac-Moody algebras.
Let $A=(a_{ij})_{i,j=1}^r$ be an $r \times r$ symmetrizable generalized Cartan matrix of rank $l$, and $(\mathfrak h, \Delta, \Delta^\vee)$ be a realization of $A$, where $\Delta=\{ \alpha_1, ... , \alpha_r \} \subset \mathfrak h^*$ and  $\Delta^\vee=\{ h_1, ... , h_r \} \subset \mathfrak h$ such that $\alpha_j(h_i)  = a_{ij}$,  $i,j=1, ..., r$. Let $\mathfrak g(A)$ be the symmetrizable Kac-Moody algebra associated to $(\mathfrak h, \Delta, \Delta^\vee)$. Then we have $\dim \mathfrak h =2r -l$. We denote by $\Phi$ the set of roots of $\frak g(A)$ and have $\Phi = \Phi_+ \cup \Phi_-$ where $\Phi_+$ (resp. $\Phi_-$) is the set of positive (resp. negative) roots.  Denote by $\Phi^{\rm{re}}$ (resp. $\Phi^{\rm{im}}$) the set of real (resp. imaginary) roots, and put $\Phi^{\rm{re}}_{\pm} =\Phi^{\rm{re}} \cap \Phi_{\pm}$  and
$\Phi^{\rm{im}}_{\pm} =\Phi^{\rm{im}} \cap \Phi_{\pm}$. We fix a decomposition \begin{equation} \label{eqn-decom-matrix}  A= \mathrm{diag} (\epsilon_1, \cdots , \epsilon_r ) B, \end{equation} where $\epsilon_i$ are positive rational numbers and $B=(b_{ij})$ is a symmetric half-integral matrix, i.e. $b_{ij}=b_{ji}$, $b_{ii} \in \mathbb Z$ and $b_{ij} \in \frac 1 2 \mathbb Z$. We will write $b_i = b_{ii}$.
As in Chapter 2 of \cite{Kac}, we define a standard symmetric bilinear form $(\ , \ )$ on $\mathfrak h^*$  such that $(\alpha_i, \alpha_j)= b_{ij}$ for $i,j =1, ... , r$. 

We extend the sets $\Delta$ and $\Delta^\vee$, and choose  bases \[\Delta \cup \{ \delta_{k} \ | \ k=1, ..., r- l \} \quad \text{ and } \quad \Delta^\vee \cup \{ d_{k} \ | \ k=1, ..., r- l \}\] for $\mathfrak h^*$ and $\mathfrak h$, respectively, such that
$\alpha_j(d_{k}) = 0 \text{ or }1$, $\delta_{k}(h_j)= 0 \text{ or }1$ and $\delta_{k} (d_{k'}) = 0$ for $j=1, ..., r$ and $k, k'=1, ..., r-l$.
Let $P^\vee$ be the $\mathbb Z$-span of the basis $\Delta^\vee \cup \{ d_{k} \ | \ k=1, ..., r- l \}$, and let $\frak h_{\mathbb R} = \mathbb R \otimes P^\vee \subset \frak h$. We set $P= \{ \lambda \in \mathfrak h^* | \lambda(P^\vee) \subseteq \mathbb Z \}$, and $P_+= \{ \lambda \in \mathfrak h^* | \lambda(P^\vee) \subseteq \mathbb Z_{\ge 0} \}$. 
Define $\omega_i \in \frak h^*$ $(i=1, ... , r)$ by $\omega_i(h_j)=\delta_{ij}$, $\omega_i (d_{k})=0$, $j=1, ... , r$, $k=1, ..., r- l$ and put $\rho = \sum_{i=1}^r \omega_i$.
Similarly, we define $\omega^\vee_i \in \frak h$ $(i=1, ... , r)$ by $\alpha_j(\omega^\vee_i)=\delta_{ij}$, 
$\delta_{k}(\omega^\vee_i)=0$, $j=1, ... , r$,  $k=1, ..., r- l$ and put $\rho^\vee = \sum_{i=1}^r \omega^\vee_i $.
Define $Q= \bigoplus_{i=1}^r \mathbb Z \alpha_i$ and $Q_+= \bigoplus_{i=1}^r (\mathbb Z_{\ge 0}) \alpha_i$. 
We have the usual ordering on $P$ (and on $Q$) given by $\lambda \geq \mu \Leftrightarrow \lambda - \mu \in Q_+$.  For $\beta \in Q$, we write $\beta = k_1 \alpha_1 + \cdots + k_r \alpha_r$ and define 
$d(\beta) = \beta(\rho^\vee)=k_1 + \cdots + k_r$.

Let $W$ be the Weyl group of $\mathfrak g(A)$ generated by the simple reflections $\sigma_i \in GL(\mathfrak h^*)$. We have the standard actions of $W$ on $\frak h$ and on $\mathfrak{h}^*$.
For $w \in W$, we let $\Phi(w) = \Phi_+ \cap w^{-1} \Phi_- \subseteq \Phi_+^{\rm{re}}$.  We denote by $\ell(w)$ the length of $w$, and define $\mathrm{sgn}(w) = (-1)^{\ell(w)}$. If $\ell(\sigma_i w) = \ell(w)+1$ then \begin{equation} \label{eqn-phi}  \Phi(\sigma_i w) = \Phi(w) \cup \{ w^{-1} \alpha_i \}, \end{equation} and if $\ell(w \sigma_i) = \ell(w)+1$ then \begin{equation} \label{eqn-phi-1}  \Phi(w \sigma_i) = \sigma_i (\Phi(w)) \cup \{ \alpha_i \}. \end{equation}

\medskip

Let $n \ge 1$ be an integer and let $F$ be an algebraic number field that contains the group $\mu_{2n}$ of
$2n$-th roots of unity. Let $S$ be a finite set of places of $F$ containing the archimedean ones, all those which are
ramified over $\mathbb Q$ and enough others so that the ring $\mathfrak o_S$ of $S$-integers is a principal
ideal domain. We embed $\mathfrak o_S$ into $F_S:=\prod_{v\in S} F_v$ along the diagonal. We choose a nontrivial additive character $\psi$ of $F_S$ such that $\psi(x \mathfrak o_S)=1$ if and only if $x\in \mathfrak o_S$.  Let $S_\infty$ be the set of archimedean places in $S$, and $S_{\mathrm{fin}}$ be the set of the nonarchimedean places so that  $S= S_\infty \cup S_{\mathrm{fin}}$. We denote
$F_\infty =\prod_{v \in S_\infty} F_v$ and $F_{\mathrm{fin}} =\prod_{v \in S_{\mathrm{fin}}} F_v$. Then $F_S=F_\infty \times F_{\mathrm{fin}}$. Let $(x,y)_S = \prod_{v \in S} (x_v,y_v)_v$ be the $S$-Hilbert
symbol, where we take the same convention on $(\ , \ )_v$ as in \cite{BBF}, i.e. it is the inverse of the symbol
used in \cite{Neu}. If $c, d$ are coprime elements of $\mathfrak o_S$, let $(\frac c d )$ denote the $n$-th power residue symbol. Then we have the reciprocity law $\left ( \frac c d \right ) = (d,c)_S \left ( \frac d c \right )$. We fix an isomorphism $\epsilon : \mu_n \rightarrow \{ x \in \mathbb C^\times \ | \ x^n=1 \}$ and will suppress this isomorphism from the notation.
If $t$ is any positive integer and $a, c \in \mathfrak o_S$, we define the Gauss sum \[ g(a,c;t) = \sum_{d \ \mathrm{mod}\  c} \left ( \frac d c \right )^t \psi \left (\frac {ad} c \right ). \]

For $\mathbf{x, y} \in  (F_S^\times)^r$ and for each $i$, we define
\begin{equation} \label{eqn-prev} (\mathbf{x,y})^B_{S,i} = (x_i, y_i)^{b_i}_S \ \prod_{j>i} (x_i, y_j)_S^{2 b_{ij}}, \end{equation} where
$\mathbf{x}=(x_1, \cdots , x_r)$ and $\mathbf{y}=(y_1, \cdots , y_r)$, and set
$(\mathbf{x,y})^B_{S} = \prod_{i=1}^r (\mathbf x,  \mathbf y)^{B}_{S,i}$.
We also define \[\xi_B  ( \mathbf{x}, \mathbf{y}  ) = \prod_{i=1}^r \left ( \frac {x_i} {y_i} \right )^{b_i} \left ( \frac {y_i} {x_i} \right )^{b_i} \ \prod_{i<j}  \left ( \frac {x_i} {y_j} \right)^{2 b_{ij}} \left ( \frac {y_i} {x_j} \right)^{2 b_{ij}},  \]   \[ \left [ \frac {\mathbf{x}} {\mathbf{y}} \right ]^B = \prod_{i=1}^r \left ( \frac {x_i} {y_i} \right )^{b_i}  \qquad \text{ and } \qquad \left [ \frac {\mathbf{x}} {\mathbf{y}} \right ]^{-B} = \prod_{i=1}^r \left ( \frac {x_i} {y_i} \right )^{-b_i} \]  when $\mathbf{x, y} \in (F_S^\times)^r \cap (\frak o_S)^r$.
Let $\Omega = \frak o_S^\times F_S^{\times, n}$ where $F_S^{\times, n}$ is the subgroup of $n$-th powers in $F_S^\times$, and let $\mathcal{M}_B(\Omega)$ be the space of functions $\Psi : (F_S^\times)^r \rightarrow \mathbb C$ such that
$\Psi(\mathbf {e c})= (\mathbf e, \mathbf c)_S^B \ \Psi (\mathbf c)$ when $\mathbf e \in \Omega^r $ and $\mathbf c  \in (F^\times_S)^r$. If $r=1$ and $B=(t)$ we simply write $\mathcal{M}_B(\Omega) = \mathcal{M}_t(\Omega)$.

\section{Weyl Group Action on Formal Distributions}

In this section, we generalize the Weyl group action on the field of rational functions, defined by Chinta and Gunnells, to the case of Kac-Moody root systems and to a subspace of formal distributions. Since the subspace contains all the monomials, we will be able to define $f|_w(\mathbf x)$ for a general formal distribution $f$ and a Weyl group element $w$ whenever the resulting expression is absolutely convergent. In particular, the Weyl group action on the deformed Weyl-Kac character will be well-defined.

\medskip

Let $q$ be a positive integer $\ge 2$. We consider a collection of complex numbers $\gamma(i) \in \mathbb C$, indexed by the integers modulo $n$, and such that $\gamma(0)=-1$ and $\gamma(i) \gamma(-i) = 1/q \qquad \text{if } \ i \not\equiv 0 \mod n$. We also define \[ m(\alpha) = \left\{
\begin{array}{cc}
\frac n {\mathrm{gcd}(n, (\alpha, \alpha) )}  &  \text{ if } \alpha \in \Phi^\text{re}, \\
n  &   \text{ if } \alpha\in \Phi^{\text{im}}. \\
\end{array}\right.
\]

Let $\mathcal{A}= \mathbb C[Q]$ be the group algebra of the lattice $Q$. An element $f \in \mathcal{A}$ can be written as $f=\sum_{\beta \in Q} c(\beta) \, \mathbf x^\beta$ ($c(\beta) \in \mathbb C$) with almost all $c(\beta)$ zero. We identify $\mathcal{A}$ with $\mathbb C[x_1^{\pm 1}, \cdots , x_r^{\pm 1}]$ via $\mathbf x^{\alpha_i} \mapsto x_i$.  We also let $\mathcal{B}= \mathbb C [[x_1, \cdots, x_r ]]$ be the ring of power series. More generally, let $\mathcal{E}= \mathbb C [[x_1^{\pm}, \cdots, x_r^{\pm} ]]$ be the set of formal distributions, which we identify with the set of elements $\sum_{\beta \in Q} c(\beta) \, \mathbf x^\beta$, $c(\beta) \in \mathbb C$,
so that we have $\mathcal A \subset \mathcal E$ and $\mathcal{B} \subset \mathcal {E}$.

Assume $f_i=\sum_{\beta\in Q}c_i(\beta)\, \mathbf x^\beta\in \mathcal{E}$ ($1\leq i\leq k$).  We say the product $f_1\cdots f_k$ is defined if for any $\gamma\in Q$,  the sum
\[C(\gamma)=\sum_{\substack{\beta_1+\cdots+\beta_k=\gamma \\ \beta_i\in Q, 1\leq i\leq k}} c_1(\beta_1)\cdots c_k(\beta_k)\]
is absolutely convergent. If this is the case, we define
\[f_1\cdots f_k=\sum_{\gamma\in Q} C(\gamma)\, \mathbf x^\gamma.\] It follows that if a product is defined in $\mathcal{E}$, it is independent of the order of its factors.
For example, we have \[ \left ( \sum_{k \in \mathbb Z} \mathbf x^{ k \alpha_i} \right ) \left ( \sum_{k \ge 0} \frac {\mathbf x^{ k \alpha_i} }{2^k} \right ) = 2 \sum_{k \in \mathbb Z} \mathbf x^{ k \alpha_i} .\]  However, the following product is not defined:
\[ \left ( \sum_{k \in \mathbb Z} \mathbf x^{ k \alpha_i} \right ) (1-\mathbf x^{\alpha_i}) \left ( \sum_{k \ge 0}  {\mathbf x^{ k \alpha_i} }  \right )  .\]
The use of formal distributions are common in the theory vertex operator algebras. For more details on formal calculus, see, e.g. \cite[Chapter 2]{LeLi}.

We shall say that $f\in\mathcal E$ is {\em invertible} if
$f\in \mathcal B[x_1^{-1},\cdots, x_r^{-1}]\subset\mathcal E$,
and $f$  is invertible in the ring $\mathcal B[x_1^{-1},\cdots, x_r^{-1}]$. Its inverse in this ring will be denoted $f^{-1}$. For example, $1-x_i^{-1}$ is invertible, and its inverse is $\sum_{k=1}^\infty -x_i^k$ (not $\sum_{k=0}^\infty x_i^{-k}$).
Let $\{ f_i (\mathbf x) \}_{i \in I}$ be a family of formal distributions. We define the sum $\sum_{i \in I} f_i(\mathbf x) $ if the coefficient of each $\mathbf x^\beta$ in the sum is an absolutely-convergent series of complex numbers.

Let $\mathcal{E}'$ be the subspace of $\mathcal{E}$ defined by
\[\mathcal{E}'=\{f=\sum_\beta c(\beta)\, \mathbf x^\beta\in\mathcal{E} \ |\ |c(\beta)|\ll q^{d(\beta)}\}.\] Here
$|c(\beta)|\ll q^{d(\beta)}$ means that there exists a positive real number $A$ such that $|c(\beta)|\leq Aq^{d(\beta)}$ for any $\beta\in Q$. Obviously, this subspace contains all the monomials; this is the  subspace where the action of the Weyl group $W$ is always convergent.
For an element $f\in\mathcal E$, denote
\[|f|(\mathbf x)=\sum_{\beta\in Q} |c(\beta)|\ \mathbf x^\beta, \quad \text{if } f(\mathbf x)=\sum_{\beta\in Q} c(\beta)\ \mathbf x^\beta.\]
Trivially, $f\in\mathcal E'$ if and only if $|f|\in\mathcal E'$.
Obviously $\mathcal A\subset \mathcal E'$. We write $\mathcal{B}'=\mathcal{B}\cap\mathcal{E}'$.
Given $f= \sum_{\beta \in Q} c(\beta) \, \mathbf x^\beta \in \mathcal{E}$, we say $\beta\in Q$ is a {\em lower bound} for $f$ if $c(\gamma) \neq 0$ implies $\beta \le \gamma$.
It is easy to see that $f$ has a lower bound if and only if $f\in \mathcal B[x_1^{-1}, \cdots, x_r^{-1}]$.

Let $Q' \subseteq Q$ be the sublattice of $Q$ generated by $m(\alpha)\alpha$ ($\alpha\in \Phi$), namely
\[Q'=\text{span}_\mathbb{Z}\{m(\alpha)\alpha \ | \ \alpha\in\Phi\}=\text{span}_\mathbb{Z}\{m(\alpha)\alpha \ | \ \alpha\in\Phi^\text{re}\},\] by the definition of $m(\alpha)$. It is easy to see that  $Q'$  lies between $nQ$ and $Q$ and is a sublattice of $Q$. It is also not hard to see that it is invariant under the action of $W$ on $Q$, by noting that $m(w \alpha)=m(\alpha)$ and that $\Phi^{\text{re}}$ is invariant under $W$. Let $\nu : Q \rightarrow Q/Q'$ be the projection and define the subspaces $\mathcal{E}'_\beta:=\{f\in \mathcal{E}' \ |\ \nu(\text{supp}(f))\subset\{\beta\}\}  \subset \mathcal E'$ for $\beta\in Q/Q'$. Similarly $\mathcal{B}'_\beta=\mathcal{B}\cap \mathcal{E}'_\beta$, hence $\mathcal{B}'_0$ and $\mathcal{E}'_0$ are defined, and
$\mathcal{E}'=\bigoplus_{\beta\in Q/Q'} \mathcal{E}'_\beta$.
We shall need a lemma on $Q'$.

\begin{Lem} \label{lem-lattice}
(1) For any $1\leq i, j\leq r$, we have \[\frac{2m(\alpha_i)(\alpha_i,\alpha_j)}{m(\alpha_j)(\alpha_j,\alpha_j)}\in\mathbb{Z}.\]

(2) The lattice $Q_1=\text{span}_\mathbb{Z}\{m(\alpha_i)\alpha_i : 1\leq i\leq r\}$ is W-invariant; hence $Q'=Q_1$.

(3) For any $\beta\in Q'$ and $1\leq i\leq r$, $\beta(h_i)\in m(\alpha_i)\mathbb{Z}$ .
\end{Lem}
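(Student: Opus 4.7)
The strategy is to establish part (1) as the key arithmetic fact and deduce (2) and (3) as formal consequences. For (1), I rewrite the quotient using $(\alpha_i,\alpha_j) = b_{ij}$ and $a_{ji} = 2b_{ij}/b_j$, getting
\[
\frac{2\, m(\alpha_i)(\alpha_i,\alpha_j)}{m(\alpha_j)(\alpha_j,\alpha_j)} \;=\; \frac{m(\alpha_i)}{m(\alpha_j)}\, a_{ji} \;=\; \frac{\mathrm{gcd}(n,b_j)}{\mathrm{gcd}(n,b_i)}\, a_{ji}.
\]
So integrality reduces to $\mathrm{gcd}(n,b_i) \mid \mathrm{gcd}(n,b_j)\, a_{ji}$, and this follows from
\[
\mathrm{gcd}(n,b_j)\, a_{ji} \;=\; \mathrm{gcd}\bigl(n a_{ji},\, b_j a_{ji}\bigr) \;=\; \mathrm{gcd}\bigl(n a_{ji},\, 2b_{ij}\bigr),
\]
since $\mathrm{gcd}(n,b_i)$ divides $n$ (hence $n a_{ji}$) and divides $b_i$, which in turn divides $2b_{ij}$ by the integrality of $a_{ji} = 2b_{ij}/b_i$.

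For (2), I would verify $W$-invariance of $Q_1$ by checking each simple reflection. Since $\sigma_k(\alpha_j) = \alpha_j - a_{kj}\alpha_k$,
\[
\sigma_k\bigl(m(\alpha_j)\alpha_j\bigr) \;=\; m(\alpha_j)\alpha_j \;-\; \frac{m(\alpha_j)\, a_{kj}}{m(\alpha_k)}\,\bigl(m(\alpha_k)\alpha_k\bigr),
\]
and the coefficient is an integer by part (1) with $(i,j)$ replaced by $(j,k)$, so $\sigma_k(Q_1) \subseteq Q_1$. The equality $Q' = Q_1$ then follows: $Q_1 \subseteq Q'$ is immediate, and conversely every real root has the form $\alpha = w\alpha_i$ with $m(\alpha) = m(\alpha_i)$ (by $W$-invariance of the bilinear form), so $m(\alpha)\alpha = w\bigl(m(\alpha_i)\alpha_i\bigr) \in w(Q_1) = Q_1$, using the description of $Q'$ recorded just before the lemma.

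For (3), using $Q' = Q_1$ from (2), I write $\beta = \sum_j c_j\, m(\alpha_j)\alpha_j$ with $c_j \in \mathbb{Z}$, so that $\beta(h_i) = \sum_j c_j\, m(\alpha_j)\, a_{ij}$. The divisibility $m(\alpha_i) \mid m(\alpha_j)\, a_{ij}$ for every $j$ is exactly (1) applied with $i$ and $j$ interchanged, so $\beta(h_i) \in m(\alpha_i)\mathbb{Z}$. The only real obstacle is the arithmetic identity in (1); once one realizes that $b_j a_{ji} = 2b_{ij}$ converts the mixed expression into a $\mathrm{gcd}$ that $\mathrm{gcd}(n,b_i)$ obviously divides, the rest is routine bookkeeping.
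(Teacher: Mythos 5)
Your proof is correct, and parts (2) and (3) proceed exactly as in the paper: the paper likewise reduces (2) to the integrality of the coefficient of $m(\alpha_j)\alpha_j$ in $\sigma_j(m(\alpha_i)\alpha_i)$ via part (1), and reduces (3) to part (1) applied to the generators $m(\alpha_j)\alpha_j$ of $Q_1=Q'$. Where you genuinely diverge is in part (1). The paper argues prime by prime: fixing $p$ and writing $t_i=v_p(b_i)$, $t_j=v_p(b_j)$, $s=v_p(n)$, it reduces the claim to the inequality $t_j-\min\{t_j,s\}\le v_p(2b_{ij})-\min\{t_i,s\}$ and checks the two cases $t_j\le s$ and $t_j>s$. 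You instead package the same arithmetic into the single global identity $\gcd(n,b_j)\,|a_{ji}|=\gcd(n|a_{ji}|,\,2|b_{ij}|)$ and observe that $\gcd(n,b_i)$ divides both entries (it divides $n$, and it divides $b_i\mid 2b_{ij}$). This is a clean, valuation-free route that buys a slightly shorter argument with no case split; the paper's valuation argument is perhaps more transparent about exactly where the hypothesis $b_i\mid 2b_{ij}$, $b_j\mid 2b_{ij}$ enters. One small slip: in your final clause you write ``the integrality of $a_{ji}=2b_{ij}/b_i$,'' but $2b_{ij}/b_i=a_{ij}$ (it is $a_{ji}=2b_{ij}/b_j$ that you correctly used earlier); the conclusion $b_i\mid 2b_{ij}$ is unaffected since $a_{ij}\in\mathbb{Z}$. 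Also, strictly speaking $\gcd(n,b_j)\,a_{ji}$ may be negative while a gcd is nonnegative, so the identity should be read up to sign, which is harmless for divisibility.
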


\begin{proof}
(1) We fix any prime $p$ and denote $v_p$ the valuation at $p$. Assume
$v_p((\alpha_i, \alpha_i))=t_i$, $v_p((\alpha_j, \alpha_j))=t_j$, and $v_p(n)=s$. Since all quantities are integers, we have $t_i, t_j, s\in\mathbb{Z}_{\geq 0}$. Now
\[v_p(m(\alpha_i))=s-\min\{t_i,s\} \quad \text{and}\quad v_p(m(\alpha_j))=s-\min\{t_j,s\} .\]
Since $2(\alpha_i,\alpha_j)$ is divisible by both $(\alpha_i,\alpha_i)$ and $(\alpha_j,\alpha_j)$, we have
$v_p(2(\alpha_i,\alpha_j)):=k\geq \max\{t_i,t_j\}$. What we are trying to show is
$t_j-\min\{t_j,s\}\leq k-\min\{t_i,s\}$, which is obvious by dividing it into two cases: $t_j\leq s$ and $t_j>s$.

(2) It suffices to prove that $\sigma_j(m(\alpha_i)\alpha_i)\in Q_1$ for any $i, j$.  It holds trivially if $i=j$. Now assume $i\neq j$. By explicit calculations,
\[\sigma_j(m(\alpha_i)\alpha_i)=m(\alpha_i)\alpha_i-\frac{2m(\alpha_i)(\alpha_i,\alpha_j)}{m(\alpha_j)(\alpha_j,\alpha_j)} m(\alpha_j)\alpha_j,\]
which is in $Q_1$ by Part (1). Hence $Q_1$ is W-invariant, and by definition of  real roots, $Q'=Q_1$.

(3) It is enough to show this for generators of $Q'$. Since $Q_1=Q'$, we need only to prove it for the generators of $Q_1$, in which case this is nothing but Part (1).
\end{proof}

We write $\mathbf{x}=(x_1, ... , x_r)$, and define a change-of-variable formula by
$(\sigma_i \mathbf{x})_j = (qx_i)^{- a_{ij}} x_j$ for a simple reflection $\sigma_i \in W$, where $A=(a_{ij})$ is the generalized Cartan matrix. One can check that this definition extends to the whole group $W$. For $\beta = \sum k_i \alpha_i \in Q$ and $w \mathbf x = (y_1, ... , y_r)$, $w \in W$, we define $(w \mathbf x)^\beta = y_1^{k_1} \cdots y_r^{k_r}$.  Then  we have \begin{equation} \label{eqn-inv} (w \mathbf{x})^\beta =q^{d(w^{-1}\beta -\beta)} \mathbf{x}^{w^{-1} \beta} \quad \text{ for } w \in W .  \end{equation} In particular, we obtain $(\sigma_i \mathbf x)^\beta = (q x_i)^{- \beta(h_i)} \mathbf x^{\beta}$.

In the rest of this section, we fix $\lambda \in P_+$.
We define a shifted action of $W$ on $Q$ (treating $Q$ as a set) by \[ \sigma_i \cdot \beta = \sigma_i (\beta -\lambda -\rho)+\lambda +\rho, \qquad \beta \in Q, \quad i=1,..., r. \] For any $\beta \in Q$, we set \[ \mu_{i,\lambda}(\beta) = \mu_i(\beta)=(\lambda +\rho -\beta)(h_i), \qquad i=1, ..., r.\] Then we have \begin{eqnarray} &&\label{eqn-see} \mu_i(\beta) = \mu_i(0) - \beta(h_i), \quad  \sigma_i \cdot \beta = \beta + \mu_i(\beta) \alpha_i = \sigma_i \beta +\mu_i (0) \alpha_i, \quad \mu_i(\sigma_i \cdot \beta) = - \mu_i(\beta), \\ &&
 \quad \sigma_i\cdot (\beta+\gamma)=\sigma_i\cdot \beta+\sigma_i\gamma, \quad \text{ and } \quad \mu_i(\beta+\gamma)=\mu_i(\beta)-\gamma(h_i) .\nonumber\end{eqnarray}

Now we define the action of $W$ on $\mathcal{E}'$. First, fix a generator $\sigma_i \in W$ and put $m=m(\alpha_i)$ for the moment. For an integer $k$, we denote by $[k]_{m}$ the largest multiple of $m$ that is smaller than or equal to $k$. We define, for any $\beta \in Q$, \[ \mathcal{P}_{\beta, i} (\mathbf x) = (q \mathbf x^{\alpha_i})^{[\mu_i(\beta)]_m}  (1 - 1/q)\sum_{k=0}^\infty (q^{m-1}\mathbf x^{m\alpha_i})^k, \quad \text{ and}\] \[ \mathcal{Q}_{\beta, i}(\mathbf x) =   \gamma(b_i \mu_i(\beta) ) q^{\mu_i(\beta)} (1- (q \mathbf x^{\alpha_i})^{-m})\sum_{k=0}^\infty (q^{m-1}\mathbf x^{m\alpha_i})^k .\] Note that $ \mathcal{P}_{\beta, i} (\mathbf x)$ and $\mathcal{Q}_{\beta, i}(\mathbf x)$ belong to $\mathcal E'_0$.

\begin{Rmk}
The definitions of $\mathcal{P}_{\beta, i}$ and $\mathcal{Q}_{\beta, i}$ are slightly different from those in \cite{CGco}, but the definition of the action of $\sigma_i$ will be the same as in  \cite{CGco}.
\end{Rmk}

\begin{Def} \label{def-action}
For  $\beta \in Q$ and each $i=1, ... , r$, we define
\begin{equation*}  (\mathbf{x}^\beta |_\lambda \sigma_i)(\mathbf x) =  \mathcal{P}_{\beta, i}(\mathbf x) \mathbf{x}^\beta  + \mathcal{Q}_{\beta, i}(\mathbf x)   \mathbf{x}^{\sigma_i \cdot \beta} . \end{equation*} Moreover, we extend it to $\mathcal E'$ by
\[\left.\left(\sum_{\beta\in Q}c(\beta)\, \mathbf x^\beta\right)\right|_\lambda\sigma_i(\mathbf x)=\sum_{\beta\in Q}c(\beta)\, (\mathbf x^\beta|_\lambda\sigma_i)(\mathbf x).\] Finally, if $f\in \mathcal E'$ and $w=\sigma_{i_1}\cdots\sigma_{i_k}\in W$, we define
\[f|_\lambda w (\mathbf x)=f|_\lambda\sigma_{i_1}|_\lambda\sigma_{i_2}\cdots |_\lambda\sigma_{i_k}(\mathbf x).\]
\end{Def}

We will show that the above definition indeed defines an action of $W$ on $\mathcal E'$. In particular, $\left(\mathbf x^\beta |_\lambda w\right)(\mathbf x)$ will be well-defined, and the following definition has no ambiguity:

\begin{Def} \label{def-dis}
If $f(\mathbf x)=\sum_{\beta\in Q} c(\beta)\mathbf x^\beta \in \mathcal E$, then for any $w\in W$, we define
\[f|_\lambda w(\mathbf x)=\sum_{\beta\in Q} c(\beta)\left(\mathbf x^\beta |_\lambda w\right)(\mathbf x),\]
provided the sum on the right-hand side is absolutely convergent. 
\end{Def}

Now we begin to prove that the action of $W$ on $\mathcal E'$ is well-defined.
It is easy to see that for any $\beta$ and $i$, $\mathbf x^\beta |_\lambda\sigma_i(\mathbf x)\in \mathcal E'$. However, this is not enough for our purpose and we need to prove more:

\begin{Lem}\label{lem-act-1}
If $f\in\mathcal E'$, so is $f|_\lambda\sigma_i$ for any $i=1, \dots, r$. 
\end{Lem}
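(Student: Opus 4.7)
The strategy is a direct coefficient estimate. Write $f=\sum_\beta c(\beta)\mathbf x^\beta$ with $|c(\beta)|\le Aq^{d(\beta)}$, set $m=m(\alpha_i)$, and expand both $\mathcal P_{\beta,i}(\mathbf x)\mathbf x^\beta$ and $\mathcal Q_{\beta,i}(\mathbf x)\mathbf x^{\sigma_i\cdot\beta}$ as explicit monomial series in $\mathbf x^{\alpha_i}$ by unfolding the geometric factor $\sum_{k\ge 0}(q^{m-1}\mathbf x^{m\alpha_i})^k$. Then for each fixed target weight $\gamma\in Q$, I would read off the contribution of each source $\beta$ to the coefficient of $\mathbf x^\gamma$ in $f|_\lambda\sigma_i$ and bound the resulting infinite sum, uniformly in $\gamma$, by a constant multiple of $q^{d(\gamma)}$.

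Parametrising $\beta=\gamma+s\alpha_i$, one has $d(\beta)=d(\gamma)+s$ and $\mu_i(\beta)=\mu_i(\gamma)-2s$. The $\mathcal P$-piece contributes only when $s$ is a multiple of $m$ satisfying $s\ge m\lfloor\mu_i(\gamma)/m\rfloor$, and after collecting the $q$-exponents the bound $|c(\beta)|\le Aq^{d(\gamma)+s}$ combines with the coefficient $(1-1/q)q^{[\mu_i(\beta)]_m/m-(m-1)s/m}$ to give an estimate of the shape $A(1-1/q)q^{d(\gamma)+\mu_i(\gamma)/m-s/m}$; summing the geometric series in $s/m$ produces at most a constant times $q^{d(\gamma)}$. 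Symmetrically, expanding $(1-(q\mathbf x^{\alpha_i})^{-m})\sum_k(q^{m-1}\mathbf x^{m\alpha_i})^k$ shows that the $\mathcal Q$-piece contributes exactly for $s\in\{\mu_i(\gamma)-m\}\cup\{\mu_i(\gamma)+mk:k\ge 0\}$; using that the finitely many constants $\gamma(\,\cdot\,)$ are bounded and invoking a parallel exponent cancellation, each term for $s=\mu_i(\gamma)+mk$ is dominated by a constant times $q^{d(\gamma)-k}$, while the isolated $s=\mu_i(\gamma)-m$ term is dominated by a constant times $q^{d(\gamma)}$.

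Adding the two pieces shows that the sum defining each coefficient of $f|_\lambda\sigma_i$ converges absolutely and is dominated, uniformly in $\gamma\in Q$, by a fixed multiple of $q^{d(\gamma)}$, so $f|_\lambda\sigma_i\in\mathcal E'$. The main technical obstacle is the exponent bookkeeping: the factors $q^{[\mu_i(\beta)]_m}$, $q^{\mu_i(\beta)}$ and $q^{(m-1)k}$ built into $\mathcal P_{\beta,i}$ and $\mathcal Q_{\beta,i}$ are calibrated precisely so that the growth $q^{d(\beta)}=q^{d(\gamma)+s}$ as $s\to+\infty$ is neutralised, leaving a convergent geometric series; pinning down the admissible ranges of $s$ from the $[\,\cdot\,]_m$-truncation in $\mathcal P_{\beta,i}$ and from the support of $\mathcal Q_{\beta,i}$, and verifying that the resulting geometric sums are bounded independently of $\gamma$, is where the care is required.
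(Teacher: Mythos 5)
Your proposal is correct and follows essentially the same route as the paper: a direct coefficient estimate for each fixed $\gamma\in Q$, splitting the contributions into the $\mathcal P$-piece and the two parts of the $\mathcal Q$-piece, and using the bound $|c(\beta)|\ll q^{d(\beta)}$ together with the calibrated exponents in $\mathcal P_{\beta,i}$ and $\mathcal Q_{\beta,i}$ to reduce each coefficient to a geometric series dominated by $q^{d(\gamma)}$ uniformly in $\gamma$. The only difference is cosmetic: you parametrise the sources by the shift $s$ with $\beta=\gamma+s\alpha_i$, whereas the paper indexes the same contributions by pairs $(k,\beta)$ in the sets $I_1(\gamma),I_2(\gamma),I_3(\gamma)$ and arrives at the identical bound $\sum_{k\ge 0}q^{d(\gamma)-k}\ll q^{d(\gamma)}$.
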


\begin{proof}
We need to show that in the expression of $f|_\lambda\sigma_i(\mathbf x)$, all coefficients converge absolutely, and the resulting distribution is in $\mathcal E'$.

Put $m=m(\alpha_i)$ and assume $f(\mathbf x)=\sum_{\beta\in Q}c(\beta)\, \mathbf x^\beta$. By definition,
\begin{eqnarray*}
f|_\lambda\sigma_i(\mathbf x) &=& \sum_{\beta} c(\beta) \mathbf x^\beta (1-q^{-1}) (q\mathbf x^{\alpha_i})^{[\mu_i(\beta)]_m}\sum_{k=0}^\infty(q^{m-1}\mathbf x^{m\alpha_i})^k\\
&+& \sum_{\beta} c(\beta) \mathbf x^{\sigma_i\cdot\beta} \gamma(b_i\mu_i(\beta)) q^{\mu_i(\beta)}\sum_{k=0}^\infty(q^{m-1}\mathbf x^{m\alpha_i})^k\\
&+& (-1)\sum_{\beta} c(\beta) \mathbf x^{\sigma_i\cdot\beta} \gamma(b_i\mu_i(\beta)) q^{\mu_i(\beta)}(q\mathbf x^{\alpha_i})^{-m}\sum_{k=0}^\infty(q^{m-1}\mathbf x^{m\alpha_i})^k.\end{eqnarray*}

For any $\gamma\in Q$, denote
\begin{eqnarray*}
I_1(\gamma) &=& \{(k,\beta) \ |\  \beta+([\mu_i(\beta)]_m+mk)\alpha_i=\gamma, k\in\mathbb{Z}_{\geq 0}, \beta \in Q\},\\
I_2(\gamma) &=& \{(k,\beta) \ |\  \sigma_i\cdot\beta+mk\alpha_i=\gamma, k\in\mathbb{Z}_{\geq 0}, \beta \in Q\},\\
I_3(\gamma) &=& \{(k,\beta) \ |\  \sigma_i\cdot\beta+m(k-1)\alpha_i=\gamma, k\in\mathbb{Z}_{\geq 0}, \beta \in Q\}.
\end{eqnarray*}
It is easy to see that in each set, $k$ and $\beta$ determine each other. Then the coefficient of $\mathbf x^\gamma$ in $f|_\lambda\sigma_i(\mathbf x)$ can be written as
$c_1(\gamma)+c_2(\gamma)+c_3(\gamma)$, where
\begin{eqnarray*}
c_1(\gamma)&=&\sum_{(k,\beta)\in I_1(\gamma)} c(\beta)(1-q^{-1})q^{[\mu_i(\beta)]_m}q^{(m-1)k},\\
c_2(\gamma)&=&\sum_{(k,\beta)\in I_2(\gamma)} c(\beta)\gamma(b_i\mu_i(\beta)) q^{\mu_i(\beta)}q^{(m-1)k},\\
c_3(\gamma)&=&\sum_{(k,\beta)\in I_3(\gamma)} (-1)c(\beta)\gamma(b_i\mu_i(\beta)) q^{-m}q^{\mu_i(\beta)}q^{(m-1)k}.
\end{eqnarray*}
We denote $c_j'(\gamma)$, for $j=1,2,3$, the one obtained from $c_j(\gamma)$ by replacing all terms in the sum with their absolute values.

We prove now $c_j'(\gamma)<\infty$ and $c_j'(\gamma)\ll q^{d(\gamma)}$ simultaneously. Here the former gives $f|_\lambda\sigma(\mathbf x)\in \mathcal E$ and the latter implies it also belongs to $\mathcal E'$, since $|c_j(\gamma)|\leq c_j'(\gamma)$.

If $j=1$ and $(k,\beta)\in I_1(\gamma)$, we have
$\beta+([\mu_i(\beta)]_m+mk)\alpha_i=\gamma$. Since we have $|c(\beta)|\ll q^{d(\beta)}=q^{d(\gamma)-[\mu_i(\beta)]_m-mk}$, we obtain
\[c_1'(\gamma)\ll \sum_{k=0}^\infty q^{d(\gamma)-k}\ll q^{d(\gamma)}.\] Similarly, we can deal with the case $j=2$ and $j=3$. 
\end{proof}

\begin{Rmk} 
The proof of Lemma \ref{lem-act-1} actually shows that  the $k$-multiple sum for the coefficient of $\mathbf x^\gamma$ in $f|_\lambda\sigma_{i_1}|_\lambda\sigma_{i_2}\cdots |_\lambda\sigma_{i_k}(\mathbf x)$ is absolutely convergent  for each $\gamma$. To see this,  we first replace $f$ with $|f|$, and at each step of applying one simple reflection, we replace all $c_j(\gamma)$'s by $c_j'(\gamma)$'s as in the proof. At each step we have an element in $\mathcal E'$ with all coefficients positive, and in the end we have an element in $\mathcal E'$, say $g$.
On the other hand, the sum for any coefficient in any of the resulting products should be absolutely bounded by the corresponding coefficient in  $g$, hence absolutely convergent. 
\end{Rmk}

\begin{Lem}\label{lem-act-3}
(1) Let $w\in W$. Then $f(\mathbf x)\in \mathcal E'$ if and only if $f(w\mathbf x)\in \mathcal E'$; and $f(\mathbf x)\in \mathcal E'_0$ if and only if $f(w\mathbf x)\in \mathcal E'_0$.

(2) Let $f_1\in \mathcal E'_0$, $f_2\in \mathcal E'$ and assume $|f_1|\, |f_2|\in \mathcal E'$. Then for any $i$, we have
\begin{equation} \label{eqn-wq}
(f_1f_2)|_\lambda\sigma_i(\mathbf x)=f_1(\sigma_i\mathbf x) (f_2|_\lambda\sigma_i)(\mathbf x).
\end{equation} In particular, this holds if $f_1$ or $f_2$ is a monomial.
\end{Lem}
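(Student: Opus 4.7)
For Part (1), I plan a direct reindexing of the coefficient series. Writing $f(\mathbf x) = \sum_\beta c(\beta)\mathbf x^\beta$ and using the identity $(w\mathbf x)^\beta = q^{d(w^{-1}\beta - \beta)}\mathbf x^{w^{-1}\beta}$ from (2.4), the substitution $\eta = w^{-1}\beta$ expresses the coefficient of $\mathbf x^\eta$ in $f(w\mathbf x)$ as $c(w\eta)\,q^{d(\eta) - d(w\eta)}$. The hypothesis $|c(w\eta)| \le A\,q^{d(w\eta)}$ then forces this new coefficient to be bounded by $A\,q^{d(\eta)}$, so $f(w\mathbf x) \in \mathcal E'$; interchanging the roles of $w$ and $w^{-1}$ gives the converse. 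For the $\mathcal E'_0$ assertion, $\mathrm{supp}(f(w\mathbf x)) = w^{-1}\mathrm{supp}(f)$, so the claim reduces to $w^{-1}Q' = Q'$, i.e.\ the $W$-invariance in Lemma \ref{lem-lattice}(2).

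For Part (2), the central step I would carry out first is the monomial-level identity
\[
(\mathbf x^{\beta+\eta})|_\lambda \sigma_i(\mathbf x) = (\sigma_i\mathbf x)^\beta \cdot (\mathbf x^{\eta})|_\lambda \sigma_i(\mathbf x), \qquad \beta \in Q', \ \eta \in Q.
\]
Set $m = m(\alpha_i)$. Using $\mu_i(\beta+\eta) = \mu_i(\eta) - \beta(h_i)$ from \eqref{eqn-see} together with Lemma \ref{lem-lattice}(3), which gives $\beta(h_i) \in m\mathbb Z$, I obtain $[\mu_i(\beta+\eta)]_m = [\mu_i(\eta)]_m - \beta(h_i)$ and so a factor $(q\mathbf x^{\alpha_i})^{-\beta(h_i)}$ pulls out cleanly from $\mathcal P_{\beta+\eta,i}$. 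To extract the corresponding factor from $\mathcal Q_{\beta+\eta,i}$, I additionally need $\gamma(b_i\,\mu_i(\beta+\eta)) = \gamma(b_i\,\mu_i(\eta))$, which amounts to showing $n \mid b_i \beta(h_i)$; this holds because $b_i\,m(\alpha_i) = \mathrm{lcm}(b_i, n)$ is a multiple of $n$. Combining these reductions with $\sigma_i \cdot (\beta+\eta) = \sigma_i\beta + \sigma_i\cdot\eta$ and $(\sigma_i\mathbf x)^\beta = q^{-\beta(h_i)}\mathbf x^{\sigma_i\beta}$ makes the identity drop out.

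Summing this identity weighted by $c_1(\beta)\,c_2(\eta)$ gives, formally,
\[
(f_1 f_2)|_\lambda \sigma_i(\mathbf x) = \sum_{\beta, \eta} c_1(\beta)c_2(\eta)\,(\sigma_i\mathbf x)^\beta\,(\mathbf x^\eta)|_\lambda \sigma_i(\mathbf x) = f_1(\sigma_i\mathbf x)\cdot (f_2|_\lambda \sigma_i)(\mathbf x).
\]
The main obstacle I foresee is justifying this coefficient-by-coefficient rearrangement, since neither $\mathcal E'$ nor the space of products defined in $\mathcal E$ is automatically closed under the indicated reordering. My plan is to majorize by passing to absolute values: the monomial identity applied to $|f_1|$ and $|f_2|$ shows that the double sum contributing to the coefficient of $\mathbf x^\delta$ in the right-hand side is dominated term-by-term by the coefficient of $\mathbf x^\delta$ in $(|f_1|\,|f_2|)|_\lambda \sigma_i$, which by the hypothesis $|f_1|\,|f_2| \in \mathcal E'$ and Lemma \ref{lem-act-1} lies in $\mathcal E'$ and hence has absolutely convergent coefficients. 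Fubini on the resulting absolutely convergent nonnegative double series then legitimizes the rearrangement. The ``in particular'' clause requires no extra argument, since $|f_1|\,|f_2| \in \mathcal E'$ is automatic whenever one of $f_1, f_2$ is a monomial.
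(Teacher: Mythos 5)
Your proposal is correct and follows essentially the same route as the paper: Part (1) by reindexing the coefficients via \eqref{eqn-inv} and invoking the $W$-invariance of $Q'$ from Lemma \ref{lem-lattice}(2), and Part (2) by first establishing the monomial identity using $\beta(h_i)\in m(\alpha_i)\mathbb Z$ and $n\mid b_i m(\alpha_i)$, then passing to the general case by dominating the rearranged double sum by the coefficients of $(|f_1|\,|f_2|)|_\lambda\sigma_i$, which lies in $\mathcal E'$ by Lemma \ref{lem-act-1}. No substantive differences from the paper's argument.
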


\begin{proof}
For (1), we only need to prove one direction from each statement. By the definition of change of variable, if $f(\mathbf x)=\sum_\beta c(\beta)\mathbf x^\beta$,
\[f(w\mathbf x):=\sum_\beta c_w(\beta)\mathbf x^\beta=\sum_\beta c(w\beta)q^{d(\beta-w\beta)}\mathbf x^\beta.\]
Then the coefficients of $f(w\mathbf x)$
\[|c_w(\beta)|=|c(w\beta)|q^{d(\beta-w\beta)}\ll q^{d(w\beta)}q^{d(\beta-w\beta)}=q^{d(\beta)}.\] Hence the first statement is true. The second statement is now obvious, since $Q'$ is invariant under the action of $W$.

For (2), we first prove the case when $f_1$ is a monomial. Let $m=m(\alpha_i)$. The assumption $|f_1|\, |f_2|\in \mathcal E'$ is valid and both sides of \eqref{eqn-wq} are defined. By the definition of the action of $\sigma_i$, it suffices to assume that $f_2$ is also a monomial. Let $f_1(\mathbf x)=\mathbf x^\beta$, $\beta\in Q'$ and $f_2(\mathbf x)=\mathbf x^\alpha$. Then by Lemma \ref{lem-lattice}, (\ref{eqn-see}) and the fact that $n|b_im$, 
it is easy to see that \[ \begin{array}{lcl} \sigma_i \cdot (\beta + \alpha) = \sigma_i \beta+ \sigma_i \cdot \alpha, & \quad& \mu_i(\beta+\alpha) = \mu_i(\alpha) - \beta(h_i), \\ \gamma(b_i \mu_i(\beta+\alpha)) = \gamma(b_i \mu_i(\alpha)), &\quad& [ \mu_i(\beta+\alpha)]_m = [\mu_i(\alpha)]_m - \beta(h_i).
\end{array}
\] Since we have  $(\sigma_i \mathbf x ) ^{\beta} = (q \mathbf x^{\alpha_i})^{-\beta(h_i)} \mathbf x^{\beta} = q^{-\beta(h_i)} \mathbf x^{\sigma_i \beta}$, we obtain
\begin{eqnarray*}
 (\mathbf x^{\beta+\alpha} |_\lambda \sigma_i)(\mathbf x) &=& \mathcal{P}_{\beta+\alpha, i}(\mathbf x) \mathbf{x}^{\beta+\alpha}  + \mathcal{Q}_{\beta+\alpha, i}(\mathbf x)   \mathbf{x}^{\sigma_i \cdot (\beta+\alpha)} \\ &=&
(q \mathbf x^{\alpha_i})^{[\mu_i(\alpha)]_m - \beta(h_i)}  (1 - 1/q)\sum_{k=0}^\infty(q^{m-1}\mathbf x^{m\alpha_i})^k\mathbf x^{\beta + \alpha} \\ & & + \gamma(b_i \mu_i(\alpha) ) q^{\mu_i(\alpha)-\beta(h_i)}  (1- (q \mathbf x^{\alpha_i})^{-m})\sum_{k=0}^\infty(q^{m-1}\mathbf x^{m\alpha_i})^k \mathbf x^{\sigma_i \beta + \sigma_i \cdot \alpha} \\ &=& (\sigma_i \mathbf x)^{\beta} \mathcal{P}_{\alpha, i}(\mathbf x) \mathbf{x}^\alpha  + (\sigma_i \mathbf x)^{\beta}  \mathcal{Q}_{\alpha, i}(\mathbf x)   \mathbf{x}^{\sigma_i \cdot \alpha} \\ &=& (\sigma_i \mathbf x)^{\beta }  (\mathbf x^{\alpha} |_\lambda \sigma_i)(\mathbf x) .
\end{eqnarray*}

Let us consider the general case. Assume
\[f_1(\mathbf x)=\sum_{\beta\in Q'} c_1(\beta) \mathbf x^\beta, \quad f_2(\mathbf x)=\sum_{\beta\in Q} c_2(\beta) \mathbf x^\beta, \quad \text{and}\quad (f_1f_2)(\mathbf x)=\sum_{\beta\in Q} c(\beta) \mathbf x^\beta.\] Formally,
\begin{eqnarray*}
(f_1f_2)|_\lambda\sigma_i (\mathbf x) &=& \sum_{\beta\in Q} c(\beta) (\mathbf x^\beta|_\lambda\sigma_i)(\mathbf x)\\
&=& \sum_{\beta_1,\beta_2 \in Q}c_1(\beta_1)c_2(\beta_2)(\mathbf x^{\beta_1}\mathbf x^{\beta_2})|_\lambda\sigma_i(\mathbf x)\\
&=& \sum_{\beta_1,\beta_2 \in Q}c_1(\beta_1)c_2(\beta_2)\mathbf (\sigma_i \mathbf x)^{\beta_1}\mathbf x^{\beta_2}|_\lambda\sigma_i(\mathbf x)\\
&=& f_1(\sigma_i\mathbf x) f_2|_\lambda\sigma_i(\mathbf x),
\end{eqnarray*}
by the monomial case we just proved. What is missing in the calculation above is the well-definedness of the product in the last line. However, by the assumption that $|f_1|\, |f_2|\in\mathcal E'$ and elementary calculations, we can see that all coefficients in this product are absolutely bounded by the corresponding coefficients in $(|f_1|\, |f_2|)|_\lambda\sigma_i(\mathbf x)$ which belongs to $\mathcal E'$ by  Lemma \ref{lem-act-1}. Hence the lemma follows.
\end{proof}

\begin{Prop} \label{prop-action}
Let $W$ be the Weyl group of the symmetrizable Kac-Moody root system $\Phi$ with symmetrization (\ref{eqn-decom-matrix}).
The action of $\sigma_i$'s on $\mathcal E'$ is compatible with the defining relation of $W$; that is, the action of $W$ is well-defined on $\mathcal E'$.
\end{Prop}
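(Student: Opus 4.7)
The plan is to verify that the operators $|_\lambda\sigma_i$ on $\mathcal E'$ satisfy the defining relations of the Coxeter group $W$: $\sigma_i|_\lambda\sigma_i=\mathrm{id}$ for every $i$, and, for each pair $i\neq j$ with $a_{ij}a_{ji}<4$ (equivalently $m_{ij}<\infty$), the braid relation of length $m_{ij}$; when $a_{ij}a_{ji}\ge 4$ there is no relation to verify. By the remark after Lemma \ref{lem-act-1}, every iterated operator $f\mapsto f|_\lambda\sigma_{i_1}|_\lambda\cdots|_\lambda\sigma_{i_k}$ is a well-defined $\mathbb C$-linear endomorphism of $\mathcal E'$, so each relation need only be checked on the monomial generators $\mathbf x^\beta$. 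Fix such a monomial and decompose $\beta=\beta'+\beta''$ with $\beta'\in Q'$ and $\beta''$ in a fixed transversal of $Q/Q'$. Since $\mathbf x^{\beta'}\in\mathcal E'_0$ is a monomial and $Q'$ is $W$-invariant (Lemma \ref{lem-lattice}(2)), iterating Lemma \ref{lem-act-3}(2) yields, for every word $w=\sigma_{i_1}\cdots\sigma_{i_k}$,
\[\mathbf x^\beta|_\lambda w\,(\mathbf x) = (w\mathbf x)^{\beta'}\cdot\bigl(\mathbf x^{\beta''}|_\lambda w\bigr)(\mathbf x).\]
The first factor is a pure change of variable coming from the standard geometric $W$-action $w\mapsto w\mathbf x$, which already satisfies every Coxeter relation. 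Hence a Coxeter relation holds on $\mathbf x^\beta$ if and only if it holds on $\mathbf x^{\beta''}$, and the verification reduces to checking each relation on a single such monomial.

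For $\sigma_i^2=\mathrm{id}$ I would expand $(\mathcal P_{\beta,i}\mathbf x^\beta+\mathcal Q_{\beta,i}\mathbf x^{\sigma_i\cdot\beta})|_\lambda\sigma_i$ using Lemma \ref{lem-act-3}(2), together with $\sigma_i\cdot(\sigma_i\cdot\beta)=\beta$ and $\mu_i(\sigma_i\cdot\beta)=-\mu_i(\beta)$ from \eqref{eqn-see}, the relation $\gamma(k)\gamma(-k)=1/q$ for $n\nmid k$, and the special case $\gamma(0)=-1$; the cross-terms cancel and the whole expression collapses to $\mathbf x^\beta$, reproducing the rank-one Chinta--Gunnells calculation. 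For a braid relation with $m_{ij}<\infty$, the pair $(\alpha_i,\alpha_j)$ generates a rank-two finite root subsystem of type $A_1\times A_1$, $A_2$, $B_2$, or $G_2$. Because $\mathcal P_{\beta,i}$ and $\mathcal Q_{\beta,i}$ depend on $\beta$ only through the single integer $\mu_i(\beta)$ and involve only the variable $x_i=\mathbf x^{\alpha_i}$, the iterated action of $\sigma_i,\sigma_j$ on $\mathbf x^\beta$ alters only the $\alpha_i$- and $\alpha_j$-exponents, while the remaining variables $x_k$ ($k\neq i,j$) ride along multiplicatively, their fixed exponents shifting $\mu_i(0)$ and $\mu_j(0)$ by constants. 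Term-by-term comparison thus reduces the braid identity on $\mathbf x^\beta$ to precisely Chinta--Gunnells's rank-two braid verification in \cite{CG}.

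The main obstacle is the bookkeeping in the rank-two step: one must confirm that the external exponents $k_l$ ($l\neq i,j$) in $\beta$ introduce no new $x_k$-dependence into $\mathcal P$ or $\mathcal Q$ beyond the constant shift of $\mu_i,\mu_j$, and that the change-of-variable contributions $(\sigma_i\mathbf x)_k=(qx_i)^{-a_{ik}}x_k$ for $k\neq i,j$ produce only monomial factors that match on both sides of the braid relation. Once this matching is verified, the finite-type braid computation of Chinta--Gunnells applies verbatim and the proposition follows.
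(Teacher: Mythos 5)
Your overall skeleton --- check $\sigma_i^2=1$ and the braid relations on monomials, reduce to the rank-two cases $a_{ij}a_{ji}<4$, and invoke Chinta--Gunnells's finite-type computation --- is the same as the paper's, and your use of Lemma \ref{lem-act-3}(2) to strip off a $Q'$-monomial factor is sound. But there is a genuine gap at the point where you say the finite-type computation ``applies verbatim.'' Chinta--Gunnells's identities among the $\mathcal P$'s and $\mathcal Q$'s are identities of \emph{rational functions}, proved in a localization $S^{-1}\mathcal A$ where $(1-q^{m-1}\mathbf x^{m\alpha_i})^{-1}$ is unambiguous. In $\mathcal E$ the same rational function has several inequivalent expansions, products of formal distributions need not be defined (the paper gives an explicit undefined product right after introducing $\mathcal E$), and naive transcription of a rational-function identity can fail: $\sum_{k\ge0}x^k+\sum_{k\ge0}x^{-k}$ is not the expansion of $\tfrac{1}{1-x}+\tfrac{1}{1-x^{-1}}=1$. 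Iterating the action for a braid relation of length up to $6$ produces products of several geometric series in the variables $(w\mathbf x)^{m_i\alpha_i}$, and you must (a) show that all these products are defined and remain in $\mathcal E'$, and (b) show that the specific expansions chosen in the definitions of $\mathcal P_{\beta,i}$ and $\mathcal Q_{\beta,i}$ are mutually coherent, so that an identity in $S^{-1}\mathcal A$ really does imply the corresponding identity of formal distributions.

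This is exactly the content the paper supplies and your proposal omits: it introduces the subring $\mathcal R=\mathcal A\left[\sum_{k\ge 0}(q^{m_i-1}(w\mathbf x)^{m_i\alpha_i})^k\right]\subset\mathcal E'$, proves by an explicit absolute-convergence estimate that every finite product in $\mathcal R$ is defined and bounded by $q^{d(\gamma)}$, and then uses the universal property of localization to obtain an injective ring homomorphism $S^{-1}\mathcal A\to\mathcal R$ through which all of Chinta--Gunnells's identities transfer. You would need this (or an equivalent direct convergence-and-cancellation argument carried out in each of the four rank-two cases) before your final sentence is justified. A minor point: the rank-two verification you want to cite is Theorem 3.2 of \cite{CGco}, not \cite{CG}.
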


\begin{proof}
The group $W$ is a Coxeter group. More precisely, $W$ is generated by $\sigma_i$ ($i=1, ..., r$) and the defining relations are $\sigma_i^2=1$, $(\sigma_i \sigma_j )^{m_{ij}}=1$ for $1\leq i\neq j\leq r$, where  $m_{ij} = 2  \text{ if } a_{ij}a_{ji} =0$;   $m_{ij} = 3  \text{ if } a_{ij}a_{ji} =1$;  $m_{ij} = 4  \text{ if } a_{ij}a_{ji} =2$;  $m_{ij} = 6 \text{ if } a_{ij}a_{ji} =3$;  $m_{ij} = \infty  \text{ if } a_{ij}a_{ji} \ge 4$.   Here $x^\infty=1$ for any $x$ by notational convention. Therefore, we need only to consider the following four cases:  \[ \left \{ \begin{array}{lll} a_{ij} =0, & a_{ji}=0, & b_i \text{ and } b_j \text{ are arbitrary;} \\ a_{ij} = -1, & a_{ji}= -1,  & b_i =b_j ;\\ a_{ij} = -1, & a_{ji}= -2 , & b_i =2b_j; \\ a_{ij} = -1, & a_{ji}= -3 , & b_i =3 b_j . \end{array} \right . \]

We need to verify the relation $\mathbf x^\beta |_\lambda \sigma_i^2 = \mathbf x^\beta$ for each $i$, and then we need to check in each of the four cases whether the action of generators $\sigma_i$ and $\sigma_j$ is compatible with the defining relation. It is obvious that we have a very similar situation as in the finite case (Theorem 3.2 \cite{CGco}). Roughly speaking, we have to verify the same identities on $\mathcal P$'s and $\mathcal Q$'s as Chinta and Gunnells did therein (up to a slight modification caused from our modification on the $\mathcal P$'s and $\mathcal Q$'s). We will make this precise and reduce our verification to theirs.

Recall the notation $\mathcal A=\mathbb{C}[Q]\subset \mathcal E'$. Consider the multiplicative set $S$ in $\mathcal A$ generated by the set $\{1-q^{m_i-1}(w\mathbf x)^{m_i\alpha_i}: w\in W, 1\leq i\leq r\}$, and let $S^{-1}\mathcal A$ be the corresponding localized ring, namely
\[S^{-1}\mathcal A=\mathcal A[(1-q^{m_i-1}(w\mathbf x)^{m_i\alpha_i})^{-1}: w\in W, 1\leq i\leq r].\]
On the other hand, we consider the following ring inside $\mathcal E'$:
\[\mathcal R=\mathcal A\left[\sum_{k=0}^\infty (q^{m_i-1}(w\mathbf x)^{m_i\alpha_i})^k: w\in W, 1\leq i\leq r\right].\]
Let us first verify that $\mathcal R$ is indeed a ring, that is, all finite products in $\mathcal R$ are defined and belong to $\mathcal E'$. The generators are all of the form
\[f_{\beta,i}(\mathbf x)=\sum_{k=0}^\infty q^{-k}q^{km_id(\beta)}\mathbf x^{km_i\beta}.\] Then a general finite product (i.e., a monomial in $\mathcal R$) is of the form
\[g(\mathbf x)f_{\beta_1,i_1}(\mathbf x)\cdots f_{\beta_l,i_l}(\mathbf x), \quad g(\mathbf x)=\sum_{\beta}c(\beta)\mathbf x^\beta\in \mathcal A.\]
The summation in the coefficient $C(\gamma)$ of $\mathbf x^\gamma$ in this product is absolutely bounded by
\begin{eqnarray*}
&&\sum_{\substack{\beta, k_1,\cdots, k_l \ge 0\\ \beta+k_1m_{i_1}\beta_1+\cdots+k_l m_{i_l}\beta_l=\gamma}} |c(\beta)|q^{-k_1-\cdots-k_l}q^{d(\gamma)-d(\beta)}\\
&=& \sum_{\beta}\sum_{\substack{k_1,\cdots,k_l \ge 0\\ k_1m_{i_1}\beta_1+\cdots+k_l m_{i_l}\beta_l=\gamma-\beta}} |c(\beta)| q^{-k_1-\cdots-k_l}q^{d(\gamma)-d(\beta)}\\
&\ll & \sum_{\beta}\sum_{k_1,\cdots,k_l \ge 0}|c(\beta)| q^{-k_1-\cdots-k_l}q^{d(\gamma)-d(\beta)}\\
&\ll & \sum_{\beta} |c(\beta)|q^{-d(\beta)} q^{d(\gamma)}\\
&\ll & q^{d(\gamma)}, 
\end{eqnarray*}
since $g \in \mathcal A$ and $c(\beta)=0$ for almost all $\beta$.
Therefore $\mathcal R$ is a ring. 

Consider now the inclusion $\mathcal A\hookrightarrow \mathcal R$. Since all elements in $S$ are invertible in $\mathcal R$ under such inclusion, there exists a unique injective ring homomorphism $S^{-1}\mathcal A\rightarrow \mathcal R$, which is compatible with the inclusion. (Actually, this is an isomorphism.) It follows that any identity in $S^{-1}\mathcal A$ has a counterpart in $\mathcal R$. In particular, all those identities of $\mathcal P$'s and $\mathcal Q$'s in $S^{-1}\mathcal A$, verified by Chinta and Gunnells, can be carried over here in $\mathcal R$. Note that their $\mathcal P$'s
and $\mathcal{Q}$'s in $S^{-1}\mathcal A$ have the images that are essentially equal to our $\mathcal P$'s and $\mathcal Q$'s in $\mathcal R$. (Notice that our minor modification on $\mathcal P$'s and $\mathcal Q$'s, which is just a redistribution of some monomials to make them belong to $\mathcal E'_0$, does not give rise to any problem.) 

Consequently, the compatibility of the $W$-action on monomials follows from that of the finite case and we refer the reader to Chinta and Gunnells' computations in Theorem 3.2 of \cite{CGco}.
\end{proof}

\begin{Rmk}
With the above theorem established, we recall Definition \ref{def-dis} and note that 
the formal distribution 
$(f|_\lambda w)(\mathbf x)$
is well-defined for any $w\in W$ and $f\in \mathcal E$
if it is absolutely convergent.  However, as mentioned in the introduction, this definition does not give an action  on the whole space $\mathcal E$.
\end{Rmk}

We denote the multiplicity of $\alpha \in \Phi$ by $\mathrm{mult}(\alpha)$ and define
\begin{eqnarray} \label{DDe}& &  \Delta (\mathbf x) = \prod_{\alpha \in \Phi_+} ( 1 - q^{m(\alpha) d(\alpha)} \mathbf x^{m(\alpha) \alpha})^{\mathrm{mult}(\alpha)},   \\ \text{and} & & 
D (\mathbf x) = \prod_{\alpha \in \Phi_+} ( 1 - q^{m(\alpha) d(\alpha)-1} \mathbf x^{m(\alpha) \alpha})^{\mathrm{mult}(\alpha)} .  \nonumber \end{eqnarray} Note that $\Delta(\mathbf x) , D( \mathbf x) \in \mathcal{B}_0$.

\begin{Lem}[Compare with Lemma 3.3 in \cite{CGco}] \label{lem-delta} \hfill
\begin{enumerate} \item The formal distribution $\Delta(w \mathbf x)$ is invertible in $\mathcal{E}$ for each $w \in W$, and we have
\[  \frac {\Delta(\mathbf x)}{ \Delta(w \mathbf x) } =  \mathrm{sgn} (w) q ^{ d(\beta)} \mathbf x^{\beta} ,\] where $\beta = \sum_{\alpha \in \Phi (w)} m(\alpha) \alpha$. 

\item Let $ j(w, \mathbf x) =  \Delta(\mathbf x) / \Delta(w \mathbf x)$. Then the function $j(w, \mathbf x)$ satisfies the cocycle relation
\[ j(ww', \mathbf x) = j(w, w' \mathbf x) j(w' , \mathbf x) \qquad \text{ for }w, w' \in W.\]
\end{enumerate}
\end{Lem}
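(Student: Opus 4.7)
The plan is to prove part (1) by induction on $\ell(w)$, handling the case $w=\sigma_i$ of a single simple reflection by direct computation, and then to derive part (2) from associativity of the change of variable $\mathbf x\mapsto w\mathbf x$. Once the explicit formula for $\Delta(\mathbf x)/\Delta(w\mathbf x)$ is established, invertibility of $\Delta(w\mathbf x)$ in $\mathcal E$ is immediate, since $\Delta(\mathbf x)\in\mathcal B$ has constant term $1$ (hence is invertible in $\mathcal B$) and the quotient is a signed Laurent monomial.

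For the base case $w=\sigma_i$, I would use the standard Kac-Moody facts that $\sigma_i$ sends $\alpha_i$ to $-\alpha_i$ and permutes $\Phi_+\setminus\{\alpha_i\}$ preserving both multiplicities and the values of $m(\cdot)$ (for imaginary roots, $W$ preserves $\Phi^{\mathrm{im}}_+$ with multiplicities; for real roots, $m(w\alpha)=m(\alpha)$). Applying (\ref{eqn-inv}) inside each factor gives
\[
1-q^{m(\alpha)d(\alpha)}(\sigma_i\mathbf x)^{m(\alpha)\alpha}=1-q^{m(\alpha)d(\sigma_i\alpha)}\mathbf x^{m(\alpha)\sigma_i\alpha},
\]
so that the factors with $\alpha\neq\alpha_i$ re-index exactly onto the corresponding factors of $\Delta(\mathbf x)$. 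The exceptional $\alpha_i$-factor becomes $1-c^{-1}$ where $c=q^{m(\alpha_i)d(\alpha_i)}\mathbf x^{m(\alpha_i)\alpha_i}$, and the identity $1-c^{-1}=-c^{-1}(1-c)$ converts this into $-c^{-1}$ times the original factor. Thus $\Delta(\sigma_i\mathbf x)=-c^{-1}\Delta(\mathbf x)$, giving $\Delta(\mathbf x)/\Delta(\sigma_i\mathbf x)=-c=\mathrm{sgn}(\sigma_i)\,q^{d(m(\alpha_i)\alpha_i)}\mathbf x^{m(\alpha_i)\alpha_i}$, which matches the claim since $\Phi(\sigma_i)=\{\alpha_i\}$.

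For the inductive step, given a reduced factorization $w=\sigma_i w'$ with $\ell(w)=\ell(w')+1$, I would decompose
\[
\frac{\Delta(\mathbf x)}{\Delta(w\mathbf x)}=\frac{\Delta(\mathbf x)}{\Delta(w'\mathbf x)}\cdot\frac{\Delta(w'\mathbf x)}{\Delta(\sigma_i(w'\mathbf x))},
\]
using the associativity $(\sigma_i w')\mathbf x=\sigma_i(w'\mathbf x)$. The first factor is handled by the inductive hypothesis, and the second by the base case applied with $\mathbf y=w'\mathbf x$, with the result then re-expressed in $\mathbf x$ via (\ref{eqn-inv}). Combining the two, and invoking (\ref{eqn-phi}) to identify $\Phi(w)=\Phi(w')\sqcup\{w'^{-1}\alpha_i\}$ (with $w'^{-1}\alpha_i\in\Phi^{\mathrm{re}}_+$ so that $m(w'^{-1}\alpha_i)=m(\alpha_i)$), reproduces $\beta=\sum_{\alpha\in\Phi(w)}m(\alpha)\alpha$ and the sign $\mathrm{sgn}(w)=-\mathrm{sgn}(w')$. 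For part (2), the same associativity gives
\[
j(ww',\mathbf x)=\frac{\Delta(\mathbf x)}{\Delta(w'\mathbf x)}\cdot\frac{\Delta(w'\mathbf x)}{\Delta(w(w'\mathbf x))}=j(w',\mathbf x)\,j(w,w'\mathbf x),
\]
where the second quotient is $j(w,\mathbf y)$ evaluated at $\mathbf y=w'\mathbf x$.

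The main point to handle carefully is the treatment of the (infinitely many) imaginary positive roots, whose multiplicities may exceed one: one must interpret $\Delta(w\mathbf x)$ as the formal distribution obtained by coefficient-wise substitution $\mathbf x^\gamma\mapsto(w\mathbf x)^\gamma$ in $\Delta(\mathbf x)=\sum_{\gamma\in Q_+}c(\gamma)\mathbf x^\gamma$, which is unambiguously an element of $\mathcal E$; the re-indexing of the infinite product under $\sigma_i$ is a bijection of multisets on $\Phi_+\setminus\{\alpha_i\}$, and the product manipulations in the base case take place in $\mathbf x^{-m(\alpha_i)\alpha_i}\mathcal B\subset\mathcal E$, where both sides are supported in the same $Q'$-coset and agree level by level.
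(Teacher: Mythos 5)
Your argument is correct and lands on the same formula, but it is organized differently from the paper's. The paper proves part (1) for an arbitrary $w$ in a single computation: it substitutes $\mathbf x \mapsto w\mathbf x$ into the defining product, uses (\ref{eqn-inv}) together with $m(w\alpha)=m(\alpha)$ and $\mathrm{mult}(w\alpha)=\mathrm{mult}(\alpha)=\mathrm{mult}(-\alpha)$ to re-index, splits the product over $\Phi_+$ according to whether $w^{-1}\alpha$ lies in $\Phi_+$ or $\Phi_-$, and flips the finitely many factors indexed by $\Phi(w)$ (all real, hence multiplicity one) via $1-c^{-1}=-c^{-1}(1-c)$; invertibility and the monomial formula drop out simultaneously, and part (2) is then dismissed as a straightforward consequence of the definition of $j$. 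You instead carry out exactly this re-indexing only for a simple reflection, using that $\sigma_i$ permutes $\Phi_+\setminus\{\alpha_i\}$, and recover general $w$ by induction on $\ell(w)$ through the cocycle decomposition, which obliges you to invoke (\ref{eqn-phi}) and the compatibility $(\sigma_i w')\mathbf x=\sigma_i(w'\mathbf x)$. The trade-off is clear: your base case is a genuinely rank-one verification and part (2) falls out of the same associativity with no extra work, whereas the paper's one-shot computation needs no reduced decompositions and makes the finiteness of the correction (namely $|\Phi(w)|=\ell(w)<\infty$, which is what guarantees invertibility in $\mathcal B[x_1^{-1},\dots,x_r^{-1}]$) visible at a glance. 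Your closing remarks on interpreting $\Delta(w\mathbf x)$ coefficient-wise and on the multiset bijection for imaginary roots address the only point where the infinite product could cause trouble, so I see no gap.
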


\begin{proof} (1)
Since $m(w \alpha) =m(\alpha)$ and $\mathrm{mult}(w \alpha) = \mathrm{mult}(\alpha)=\mathrm{mult}(-\alpha) $, we have
\begin{eqnarray*}
\Delta(w \mathbf x) &=& \prod_{\alpha \in \Phi_+} ( 1 - q^{m(\alpha) d(\alpha)} (w\mathbf x)^{m(\alpha) \alpha})^{\mathrm{mult}(\alpha)} \\ &=& \prod_{\alpha \in \Phi_+} ( 1 -  q^{ m(\alpha) d( w^{-1} \alpha )  } \mathbf x^{ m(\alpha)( w^{-1} \alpha ) } )^{\mathrm{mult}(\alpha)} \qquad (\text{using }(\ref{eqn-inv}))  \\ & =& \prod_{\alpha \in \Phi_+ \atop w^{-1} \alpha \in \Phi_-} ( 1 -  q^{ m(\alpha) d(w^{-1} \alpha)  } \mathbf x^{ m(\alpha)(w^{-1} \alpha ) } )^{\mathrm{mult}(\alpha) }  \\ & & \hskip 3cm \times \prod_{\alpha \in \Phi_+ \atop w^{-1} \alpha \in \Phi_+} ( 1 -  q^{ m(\alpha) d(w^{-1} \alpha)  } \mathbf x^{ m(\alpha)(w^{-1} \alpha ) } )^{\mathrm{mult}(\alpha)}  \\ & =& \prod_{\alpha \in \Phi_-  \atop  w \alpha \in \Phi_+ } ( 1 -  q^{ m(\alpha) d(\alpha)  } \mathbf x^{ m(\alpha) \alpha } )^{\mathrm{mult}(\alpha)} \prod_{  \alpha \in \Phi_+   \atop w \alpha \in \Phi_+ } ( 1 -  q^{ m(\alpha) d(\alpha) } \mathbf x^{ m(\alpha) \alpha } )^{\mathrm{mult}(\alpha)}
\\ & =& \prod_{\alpha \in \Phi(w) } ( 1 -  q^{ - m(\alpha) d(\alpha)  } \mathbf x^{-  m(\alpha) \alpha } )^{\mathrm{mult}(\alpha)} \prod_{  \alpha \in \Phi_+   \atop w\alpha \in \Phi_+ } ( 1 -  q^{ m(\alpha) d(\alpha) } \mathbf x^{ m(\alpha) \alpha } )^{\mathrm{mult}(\alpha)}.
\end{eqnarray*}
It follows from $\Phi(w) \subset \Phi^{\mathrm{re}}$ that $\mathrm{mult}(\alpha)=1$ for each $\alpha \in \Phi(w)$, and we obtain
\[ \Delta(w \mathbf x) = \left (  \prod_{\alpha \in \Phi(w)} - \, q^{ - m(\alpha) d(\alpha)  } \mathbf x^{- m(\alpha) \alpha } \right ) \Delta (\mathbf x).\]
Since $|\Phi(w)|= \ell(w) < \infty$, we see that $\Delta(w \mathbf x)$ is invertible in
${\mathcal E}$. Now we have
\[ \frac {\Delta(\mathbf x)}{\Delta ( w \mathbf x)} = \prod_{\alpha \in \Phi(w)} - \, q^{ m(\alpha) d(\alpha)  } \mathbf x^{m(\alpha) \alpha } = \mathrm{sgn} (w) q ^{ d(\beta)} \mathbf x^{\beta} ,\] where $\beta = \sum_{\alpha \in \Phi (w)} m(\alpha) \alpha$.

(2) It is straightforward to verify the identity, using the definition of $j$. It can also be proved directly using the part (1).
\end{proof}

We need the following lemma to show that some elements to be defined later belong to $\mathcal B$.

\begin{Lem} \label{lem-cal}
Let $\lambda \in P_+$ and $\beta \in Q$.
\begin{enumerate}
\item The function $(w \mathbf x)^{-\beta} j(w, \mathbf x) (\mathbf x^\beta |_\lambda w) (\mathbf x)$ is an element of $\mathcal{B}$ for $w \in W$.

\item For any $w\in W$, the lattice point
$\beta=\sum_{\gamma\in \Phi(w)} \gamma$
is a lower bound for $j(w,\mathbf x)(1|_\lambda w)(\mathbf x)$.
\end{enumerate}
\end{Lem}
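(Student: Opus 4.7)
The plan is to establish by induction on $\ell(w)$ the following strengthening that captures both parts simultaneously: for every $w \in W$ and every $\beta \in Q$, the series
\[ g_{w,\beta}(\mathbf x) \;:=\; (w\mathbf x)^{-\beta}\,j(w,\mathbf x)\,(\mathbf x^\beta|_\lambda w)(\mathbf x) \]
belongs to $\mathcal B$ and has $\sum_{\gamma\in\Phi(w)}\gamma$ as a lower bound. Part (1) is then the $\mathcal B$-membership, and Part (2) is the specialization $\beta=0$ (where $(w\mathbf x)^{-\beta}=1$). The base case $w=e$ is immediate: $g_{e,\beta}=1$ and $\Phi(e)=\emptyset$.

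For the inductive step, I will fix a simple reflection $\sigma_i$ with $\ell(\sigma_iw)=\ell(w)+1$, equivalently $w^{-1}\alpha_i\in\Phi^{\mathrm{re}}_+$; then $z:=(w\mathbf x)^{\alpha_i}=q^{d(w^{-1}\alpha_i-\alpha_i)}\mathbf x^{w^{-1}\alpha_i}$ is (up to a power of $q$) a monomial whose $\mathbf x$-exponent lies in $Q_+$. Using Lemma \ref{lem-act-3}(2) iteratively along a reduced expression of $w$ (valid since $\mathcal P_{\beta,i},\mathcal Q_{\beta,i}\in\mathcal E'_0$), together with the cocycle $j(\sigma_iw,\mathbf x)=j(\sigma_i,w\mathbf x)j(w,\mathbf x)=-q^{m_i}z^{m_i}j(w,\mathbf x)$ from Lemma \ref{lem-delta}(2), the shifted-action identity $\sigma_i\cdot\beta=\beta+\mu_i(\beta)\alpha_i$, and the change-of-variable relation $(\sigma_iw\mathbf x)^\beta=(qz)^{-\beta(h_i)}(w\mathbf x)^\beta$, a direct computation combined with the inductive hypothesis applied to both $\beta$ and $\sigma_i\cdot\beta$ yields, after simplification via $\mu_i(\beta)+\beta(h_i)=\mu_i(0)$,
\[ g_{\sigma_iw,\beta}=-q^{m_i+\beta(h_i)}\bigl[z^{m_i+\beta(h_i)}\mathcal P_{\beta,i}(w\mathbf x)\,g_{w,\beta}+z^{m_i+\mu_i(0)}\mathcal Q_{\beta,i}(w\mathbf x)\,g_{w,\sigma_i\cdot\beta}\bigr]. \]
Expanding the defining series for $\mathcal P_{\beta,i}(w\mathbf x)$ and $\mathcal Q_{\beta,i}(w\mathbf x)$ as Laurent series in $z$, the lowest $z$-power in $z^{m_i+\beta(h_i)}\mathcal P_{\beta,i}(w\mathbf x)$ is $m_i+\beta(h_i)+[\mu_i(\beta)]_{m_i}$, while that in $z^{m_i+\mu_i(0)}\mathcal Q_{\beta,i}(w\mathbf x)$ is $\mu_i(0)$; the elementary bound $[k]_{m_i}\ge k-m_i+1$ together with $\mu_i(0)\ge 1$ forces both to be at least $1$. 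Since $z$-powers translate into nonnegative integral multiples of $w^{-1}\alpha_i\in Q_+$, each prefactor lies in $\mathcal B$ with $\mathbf x$-lower bound $\ge w^{-1}\alpha_i$. Combined with the inductive lower bound on $g_{w,\beta}$ and $g_{w,\sigma_i\cdot\beta}$, each summand has lower bound $\ge w^{-1}\alpha_i+\sum_{\gamma\in\Phi(w)}\gamma=\sum_{\gamma\in\Phi(\sigma_iw)}\gamma$ by (\ref{eqn-phi}), closing the induction.

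The main obstacle I expect is the careful bookkeeping when the cocycle, the change of variables, and the shifted action are applied together, and in particular verifying that the negative $z$-powers inside $\mathcal Q_{\beta,i}(w\mathbf x)$ are truly compensated by the positive prefactor $z^{m_i+\mu_i(0)}$, so that the final expression belongs to $\mathcal B$ and not merely to $\mathcal B[x_1^{-1},\dots,x_r^{-1}]$. A subsidiary technical point is the justification that Lemma \ref{lem-act-3}(2) may be iterated through a reduced word; this will follow from the fact that $f_1(w'\mathbf x)$ remains in $\mathcal E'_0$ at each intermediate stage, by Lemma \ref{lem-act-3}(1). Once the single-reflection case is handled cleanly, the induction propagates without further complication.
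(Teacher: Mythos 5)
Your proof is correct and follows essentially the same route as the paper's: induction on $\ell(w)$, peeling off a simple reflection via the cocycle relation and Lemma \ref{lem-act-3}(2), and checking that the resulting monomial prefactors carry a strictly positive power of $(w\mathbf x)^{\alpha_i}$, hence a lower bound of at least $w^{-1}\alpha_i$. Your explicit strengthening of the inductive hypothesis (the lower bound $\sum_{\gamma\in\Phi(w)}\gamma$ for \emph{all} $\beta$, not just $\beta=0$) is exactly what is needed to close the induction for the $\mathcal Q$-term, and makes precise what the paper's "putting $\beta=0$" leaves implicit.
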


\begin{proof} We prove (1) and (2) simultaneously using induction.
If $w=1$, there is nothing to prove for both of (1) and (2).  

Assume that $\ell(\sigma_i w) = \ell(w)+1$. Then, using Lemma \ref{lem-delta} (2), we obtain
\begin{eqnarray*}
& & (\sigma_i w \mathbf x)^{-\beta} j(\sigma_i  w, \mathbf x) (\mathbf x^\beta |_\lambda \sigma_i w) (\mathbf x) \\ &=& (\sigma_i w \mathbf x)^{-\beta} j(\sigma_i  , w \mathbf x) j(w, \mathbf x) \left [  \left ( \mathcal{P}_{\beta, i}(\mathbf x) \mathbf{x}^\beta  + \mathcal{Q}_{\beta, i}(\mathbf x)   \mathbf{x}^{\sigma_i \cdot \beta} \right ) |_\lambda w \right ] \\  &=& - (\sigma_i w \mathbf x)^{-\beta} q^{m(\alpha_i)} ( w \mathbf x)^{m(\alpha_i) \alpha_i} j(w, \mathbf x) \left [ \mathcal{P}_{\beta, i}(w \mathbf x) ( \mathbf{x}^\beta |_\lambda w )  + \mathcal{Q}_{\beta, i}(w \mathbf x)  (  \mathbf{x}^{\sigma_i \cdot \beta}  |_\lambda w )  \right ] .
\end{eqnarray*}

We first consider the term having $\mathcal P$ factor. By induction, we have
$(w \mathbf x)^{-\beta} j(w, \mathbf x) (\mathbf x^\beta |_\lambda w) (\mathbf x) \in\mathcal{B}$ and we need only to consider \begin{eqnarray*}  & & (\sigma_i w \mathbf x)^{-\beta} ( w \mathbf x)^{m(\alpha_i) \alpha_i} ( w \mathbf x)^{[\mu_i(\beta)]_{m(\alpha_i)} \alpha_i} ( w \mathbf x)^{\beta} \\ &=& q^{\beta(h_i)}  (w \mathbf x)^{- \sigma_i \beta + m(\alpha_i) \alpha_i + [\mu_i(\beta)]_{m(\alpha_i)} \alpha_i +\beta } \\ &=& q^{\beta(h_i)}  (w \mathbf x)^{\beta (h_i) \alpha_i + m(\alpha_i) \alpha_i + [\mu_i(0) -\beta(h_i) ]_{m(\alpha_i)} \alpha_i } .
 \end{eqnarray*}
We see that \begin{eqnarray*}  & &  \beta (h_i)+ m(\alpha_i)  + [\mu_i(0) -\beta(h_i) ]_{m(\alpha_i)} \\ &=& m(\alpha_i) +  [\mu_i(0) -\beta(h_i) ]_{m(\alpha_i)} -(\mu_i(0) -\beta(h_i)) + \mu_i(0)   > \mu_i(0) .\end{eqnarray*}
Since $w^{-1} \alpha_i >0$ by (\ref{eqn-phi}) and $\mu_i(0)\geq 1$, we have proved that the term having $\mathcal P$ factor is an element of $\mathcal B$. Putting $\beta =0$, we also prove, by induction, that the term having $\mathcal P$ factor in this case has a lower bound 
\[w^{-1}\alpha_i+\sum_{\beta\in \Phi(w)}\beta=\sum_{\beta\in\Phi(\sigma_i w)}\beta.\]

Now we consider the term having $\mathcal Q$ factor. Again by induction, we need only to consider \[
(\sigma_i w \mathbf x)^{-\beta} ( w \mathbf x)^{m(\alpha_i) \alpha_i} ( w \mathbf x)^{- m(\alpha_i) \alpha_i} ( w \mathbf x)^{\sigma_i \cdot \beta} = q^{\beta(h_i)} ( w \mathbf x)^{- \sigma_i \beta+ \sigma_i \cdot \beta} = q^{\beta(h_i)} ( w \mathbf x)^{\mu_i(0) \alpha_i},  \] and see that the term is an element of $\mathcal B$ as well. Putting $\beta=0$ again, we have the same lower bound as above. This completes the induction.
\end{proof}

Since $\lambda \in P_+$, we have $\mu_i(0)>0$ and consider the sum
\[ s(\mathbf x,\lambda)=\sum_{w \in W} j(w, \mathbf x) (1 |_\lambda w) (\mathbf x). \] 

\begin{Lem}
$s(\mathbf x,\lambda)$ is an element of $\mathcal{B}$.
\end{Lem}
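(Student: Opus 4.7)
The plan is to combine the two parts of Lemma \ref{lem-cal} (specialized to $\beta = 0$) with the elementary fact that $\{w \in W : \ell(w) \leq N\}$ is finite for every $N$, and thereby show that only finitely many summands contribute to any given coefficient of $s(\mathbf x,\lambda)$.

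First, I would apply Lemma \ref{lem-cal}(1) with $\beta = 0$ to conclude that each individual term $j(w, \mathbf x)(1|_\lambda w)(\mathbf x)$ already belongs to $\mathcal{B}$, so only $\gamma \in Q_+$ can appear with nonzero coefficient in any one summand. Second, applying Lemma \ref{lem-cal}(2) with $\beta = 0$, the lattice point
\[ \beta_w := \sum_{\alpha \in \Phi(w)} \alpha \]
is a lower bound for the $w$-th summand. Consequently, for any fixed $\gamma \in Q_+$, only those $w \in W$ with $\beta_w \leq \gamma$ can contribute to the coefficient of $\mathbf x^\gamma$ in $s(\mathbf x, \lambda)$.

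Next, I would bound $\ell(w)$ in terms of $d(\gamma)$. Since $\Phi(w) \subseteq \Phi_+^{\mathrm{re}}$ consists of positive real roots, each $\alpha \in \Phi(w)$ satisfies $d(\alpha) \geq 1$, and $|\Phi(w)| = \ell(w)$; hence $d(\beta_w) \geq \ell(w)$. The condition $\beta_w \leq \gamma$ forces $d(\beta_w) \leq d(\gamma)$, so we obtain the uniform bound $\ell(w) \leq d(\gamma)$. Because $W$ is generated by the $r$ simple reflections, at most $r^{d(\gamma)}$ elements $w \in W$ have length at most $d(\gamma)$, so the coefficient of $\mathbf x^\gamma$ in $s(\mathbf x, \lambda)$ reduces to a finite sum. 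Thus $s(\mathbf x, \lambda)$ is a well-defined element of $\mathcal E$, and as each of the finitely many contributing summands lies in $\mathcal B$, the sum does too.

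The place that demands care is the reduction to finitely many summands per coefficient: it rests on the rather delicate lower-bound assertion in Lemma \ref{lem-cal}(2), whose proof proceeds by induction on length and tracks the exponent of $\mathbf x^{\alpha_i}$ contributed at each application of a simple reflection. Once that input is in hand, the present lemma is essentially a counting argument.
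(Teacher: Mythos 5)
Your proof is correct and follows essentially the same route as the paper: both reduce the claim to Lemma \ref{lem-cal}, using the lower bound $\sum_{\alpha \in \Phi(w)} \alpha$ for the $w$-th summand to show that only finitely many $w$ contribute to any fixed coefficient. You merely make explicit the quantitative step (that $d(\beta_w) \ge \ell(w)$ because each element of $\Phi(w) \subseteq \Phi_+^{\mathrm{re}}$ has depth at least $1$) which the paper leaves as ``clear.''
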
 
\begin{proof}
Lemma \ref{lem-cal} says that we only need to show that $s(\mathbf x,\lambda)$ is well-defined. Consider any $\gamma\in Q$.  It is clear from \ref{lem-cal} that if $\ell(w)$ is big enough, then our lower bound obtained there will be large enough to exclude the $\mathbf x^\gamma$ term in $j(w, \mathbf x) (1 |_\lambda w) (\mathbf x)$. In other words, only finitely many terms in $\mathbf s(\mathbf x,\lambda)$ contribute to the coefficient of $\mathbf x^\gamma$, hence the series is absolutely convergent. 
\end{proof}

Note that $\Delta (\mathbf x)$ is a unit in $\mathcal B$. We define \[ h( \mathbf x ; \lambda) =  \Delta(\mathbf x)^{-1}s(\mathbf x,\lambda)=\Delta(\mathbf x)^{-1} \sum_{w \in W} j(w, \mathbf x) (1 |_\lambda w) (\mathbf x) \in  \mathcal B \quad \text{ and }\] \[ N(\mathbf x; \lambda) = h(\mathbf x ; \lambda) D( \mathbf x) \in \mathcal B .\]

\begin{Rmk}
The function $h(\mathbf x ; \lambda)$ should be considered as a deformed Weyl-Kac character. More precisely, when $n=1$, the change of variables $q \mathbf x^{\alpha_i} \mapsto \mathbf z^{-\alpha_i}$ for each $i$ makes the function $\mathbf z^{\lambda} h(\mathbf x ; \lambda)$ the Weyl-Kac character of the irreducible representation $V(\lambda)$ of the Kac-Moody algebra $\mathfrak g(A)$. See \cite{Kac} for the details on Weyl-Kac characters.
\end{Rmk}

\begin{Prop}[See Theorem 3.5 in \cite{CGco}] \label{prop-inv} The distribution $(h |_\lambda w)(\mathbf x,\lambda)$ is well-defined and  $(h |_\lambda w)(\mathbf x,\lambda) = h(\mathbf x ; \lambda)$, for each $w \in W$. 
\end{Prop}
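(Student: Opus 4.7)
The plan is to reduce to the case of a single simple reflection $w = \sigma_i$ (the general case follows from Proposition \ref{prop-action}) and to prove the invariance first for $s(\mathbf x,\lambda) = \Delta(\mathbf x)\,h(\mathbf x;\lambda)$ before transferring it to $h$.

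The first step is to establish the intermediate identity
\[
(s|_\lambda \sigma_i)(\mathbf x) = j(\sigma_i, \mathbf x)^{-1}\, s(\mathbf x, \lambda).
\]
Starting from $s(\mathbf x, \lambda) = \sum_{w' \in W} j(w', \mathbf x)\,(1|_\lambda w')(\mathbf x)$, I would apply $|_\lambda \sigma_i$ term by term. Each $j(w', \mathbf x)$ is by Lemma \ref{lem-delta}(1) a single monomial whose exponent $\sum_{\alpha \in \Phi(w')} m(\alpha)\alpha$ lies in $Q'$, so $j(w',\cdot) \in \mathcal E'_0$; Lemma \ref{lem-act-3}(2) then pulls the monomial through the action:
\[
\bigl(j(w', \mathbf x)(1|_\lambda w')\bigr)|_\lambda \sigma_i (\mathbf x)
= j(w', \sigma_i \mathbf x)\,(1|_\lambda w'\sigma_i)(\mathbf x).
\]
The cocycle relation of Lemma \ref{lem-delta}(2) rewrites $j(w', \sigma_i \mathbf x) = j(w'\sigma_i, \mathbf x)\,j(\sigma_i, \mathbf x)^{-1}$, and after reindexing by $w'' = w'\sigma_i$ the common monomial factor $j(\sigma_i, \mathbf x)^{-1}$ pulls out and leaves $s(\mathbf x, \lambda)$.

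In the second step I would transfer the invariance from $s$ to $h$. Writing $s = \Delta \cdot h$ with $\Delta \in \mathcal E'_0$ and applying Lemma \ref{lem-act-3}(2) with $f_1 = \Delta$, $f_2 = h$,
\[
(s|_\lambda \sigma_i)(\mathbf x) = \Delta(\sigma_i \mathbf x)\,(h|_\lambda \sigma_i)(\mathbf x),
\]
the existence of the right-hand side furnishing simultaneously the well-definedness claim of Definition \ref{def-dis}. Combining with Step 1 and the identity $j(\sigma_i, \mathbf x) = \Delta(\mathbf x)/\Delta(\sigma_i \mathbf x)$ from Lemma \ref{lem-delta}(1) gives
\[
\Delta(\sigma_i \mathbf x)\,(h|_\lambda \sigma_i)(\mathbf x)
= \frac{\Delta(\sigma_i \mathbf x)}{\Delta(\mathbf x)}\, s(\mathbf x, \lambda)
= \Delta(\sigma_i \mathbf x)\, h(\mathbf x;\lambda),
\]
and since $\Delta(\sigma_i \mathbf x)$ is invertible in $\mathcal E$ by Lemma \ref{lem-delta}(1), cancellation yields the desired $(h|_\lambda \sigma_i)(\mathbf x) = h(\mathbf x;\lambda)$.

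The hard part will be the absolute-convergence bookkeeping underlying both steps. In Step 1 the exchange between the infinite $w'$-sum defining $s$ and the action $|_\lambda \sigma_i$ is not formal: one must check that the coefficient of each fixed $\mathbf x^\gamma$ in $(s|_\lambda \sigma_i)(\mathbf x)$, computed via Definition \ref{def-dis} from the monomial expansion of $s$, agrees with what the termwise manipulation produces. I would combine the growing lower bound $\sum_{\gamma\in\Phi(w')} \gamma$ for each summand $j(w', \mathbf x)(1|_\lambda w')(\mathbf x)$ provided by Lemma \ref{lem-cal}(2) with the coefficient bound $\ll q^{d(\cdot)}$ extracted from the proof of Lemma \ref{lem-act-1}; together these force that only finitely many $w'$ can contribute to the coefficient of $\mathbf x^\gamma$ at each stage, legitimizing the Fubini-type interchange. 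An analogous verification is needed in Step 2 to apply Lemma \ref{lem-act-3}(2) to the product $\Delta \cdot h$, where the key point is that $\Delta$ is supported on $Q'$ and its factors are indexed by positive roots in a way compatible with the same lower-bound estimates.
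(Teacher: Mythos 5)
Your overall architecture matches the paper's: prove the invariance of $s(\mathbf x,\lambda)=\sum_{w'}j(w',\mathbf x)(1|_\lambda w')(\mathbf x)$ first, via the cocycle relation and a justified interchange of the $w'$-sum with the action, and then transfer the invariance to $h$ through the $\Delta$-factor. Your Step 1 for a single $\sigma_i$, including the use of the lower bounds from Lemma \ref{lem-cal}(2) to legitimize the Fubini interchange, is essentially the paper's argument. However, two of your reductions invoke results whose hypotheses are not available, and both need repair.

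First, the opening claim that the general case reduces to $w=\sigma_i$ ``by Proposition \ref{prop-action}'' does not go through: that proposition only establishes compatibility of the action with the relations of $W$ on $\mathcal E'$, and neither $s$ nor $h$ is shown to lie in $\mathcal E'$ (they are only in $\mathcal B$). For such $f$, Definition \ref{def-dis} defines $(f|_\lambda w)$ directly as $\sum_\beta c(\beta)(\mathbf x^\beta|_\lambda w)$, \emph{not} as an iterate of simple reflections applied to $f$; identifying the two requires interchanging the $\beta$-sum with each successive $|_\lambda\sigma_i$, i.e.\ the same Fubini-type argument you carry out in Step 1, repeated at the level of $h$. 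The paper sidesteps this by running both steps for an arbitrary $w\in W$ at once. Second, and more seriously, in Step 2 you apply Lemma \ref{lem-act-3}(2) with $f_1=\Delta$, $f_2=h$; its hypotheses require $f_1\in\mathcal E'_0$, $f_2\in\mathcal E'$ and $|f_1|\,|f_2|\in\mathcal E'$, none of which is verified, and there is no reason to expect them: the coefficient of $\mathbf x^\beta$ in $|\Delta|$ is $q^{d(\beta)}$ times the number of decompositions of $\beta$ into roots $m(\alpha)\alpha$, which is unbounded for an infinite root system, so even $\Delta\in\mathcal E'_0$ fails. The paper goes in the opposite direction: it expands $h=\Delta(\mathbf x)^{-1}s(\mathbf x,\lambda)$ as a convolution, pulls each individual $Q'$-supported monomial of $\Delta(\mathbf x)^{-1}$ through the action using only the \emph{monomial} case of Lemma \ref{lem-act-3}(2) (which needs no hypothesis on the other factor beyond what is already known), and resums to get $(h|_\lambda w)=\Delta(w\mathbf x)^{-1}(s|_\lambda w)=h$. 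Reorganizing your Step 2 to factor $\Delta^{-1}$ out of $h$, rather than $\Delta$ out of $s$, closes this gap.
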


\begin{proof}
Fix any $w'\in W$, and we first prove that $(s|_\lambda w')(\mathbf x,\lambda)$ is well-defined, and $(s|_\lambda w')(\mathbf x,\lambda)=j(w',\mathbf x)^{-1}s(\mathbf x,\lambda)$. Indeed, if we write $\left(1|_\lambda w\right)(\mathbf x)=\sum_{\beta}c_w(\beta)\mathbf x^\beta$, then
\begin{eqnarray*}
(s|_\lambda w')(\mathbf x,\lambda)&=& \sum_{\beta}\sum_{w}c_w(\beta) j(w,w'\mathbf x) (\mathbf x^\beta|_\lambda w')(\mathbf x)\\
&=& \sum_{w}\sum_{\beta}c_w(\beta) j(w,w'\mathbf x) (\mathbf x^\beta|_\lambda w')(\mathbf x)\\
&=& j(w',\mathbf x)^{-1}\sum_{w} j(ww',\mathbf x) (1|_\lambda w w')(\mathbf x)\\
&=& j(w',\mathbf x)^{-1}s(\mathbf x, \lambda),
\end{eqnarray*}
where the switch of the two summations can be justified by absolute convergence as follows. 

From Lemma \ref{lem-cal}, we know that $j(w,\mathbf x)(1|_\lambda w)(\mathbf x)$ has a lower bound given by the sum of roots in $\Phi(w)$; namely, all the nonzero terms should have exponents bigger than such a sum of roots. Write such a bound as
$\beta(w)$, and we see that $d(\beta(w))\rightarrow\infty, \quad\text{as } \ell(w)\rightarrow\infty$.

Now we can prove the absolute convergence by showing that for each $\gamma\in Q$ there are at most finitely many $w\in W$ such that
\[ \sum_{\beta}c_w(\beta) j(w,w'\mathbf x) (\mathbf x^\beta|_\lambda w')(\mathbf x) = \left (j(w,\mathbf x)(1|_\lambda w)(\mathbf x)\right)|_\lambda w'\]
contributes to the coefficient of $\mathbf x^\gamma$. We know that $j(w,\mathbf x)$ is monomial supported on $Q'$, so
\[ \sum_{\beta}c_w(\beta) j(w,w'\mathbf x) (\mathbf x^\beta|_\lambda w')(\mathbf x) = j(w,w'\mathbf x)(1|_\lambda ww')(\mathbf x)=j(w',\mathbf x)^{-1}\left(j(ww',\mathbf x)(1|_\lambda ww')(\mathbf x)\right).\]
This distribution is bounded below by $\beta(ww')-\beta(w')$, and with $w'$ fixed we have
\[d(\beta(ww')-\beta(w'))\rightarrow\infty, \quad\text{as } \ell(w)\rightarrow\infty.\]
So if $\ell(w)$ is large, the contribution of
\[\left(j(w,\mathbf x)(1|_\lambda w)(\mathbf x)\right)|_\lambda w'\]
 to the  coefficient of $\mathbf x^\gamma$ is zero. Therefore the switching of the double sum is justified.

Let us prove the proposition. Assume that
\[\Delta(\mathbf x)^{-1}=\sum_{\beta\in Q'}a(\beta)\mathbf x^\beta \text{ \  and \ } s(\mathbf x, \lambda)=\sum_{\beta\in Q} c(\beta)\mathbf x^\beta.\] Then we have
\begin{eqnarray*}
(h |_\lambda w)(\mathbf x,\lambda) &=& \sum_{\gamma\in Q}\left(\sum_{\beta\in Q'}a(\beta)c(\gamma-\beta)\right) (\mathbf x^\gamma|_\lambda w)(\mathbf x)\\
&=& \sum_{\gamma\in Q}\left(\sum_{\beta\in Q'}a(\beta)c(\gamma-\beta)\right) (w\mathbf x)^\beta (\mathbf x^{\gamma-\beta}|_\lambda w)(\mathbf x)\\
&=& \Delta(w\mathbf x)^{-1} (s|_\lambda w)(\mathbf x,\lambda)= h(\mathbf x,\lambda),
\end{eqnarray*} and the convergence also follows from this. \end{proof}

Fix an simple root $\alpha_i$ and let $m=m(\alpha_i)$ for the time being. We write $N(\mathbf x; \lambda) = \sum_{\mu \in Q} a_{\mu} \mathbf x^{\mu}$. Given any $\beta \in Q$ , we set \[ S_{\beta, i} = \{ \beta + k m \alpha_i \ | \ k \in \mathbb Z \}, \] and define \[ N_{\beta, i} (\mathbf x)  = \sum_{\mu \in S_{\beta ,i }} a_\mu \mathbf x^\mu \in \mathcal{B} .\] Now choose $\beta \in Q$ and define \[ f_{\beta, i}(\mathbf x) = \begin{cases} \displaystyle{\frac {N_{\beta, i}(\mathbf x) - \gamma ( - b_i \mu_i(\beta)) (q \mathbf x^{\alpha_i})^{(- \mu_i (\beta))_m} N_{\sigma_i \cdot \beta, i}(\mathbf x) } { 1 - q^{m-1} \mathbf x^{m \alpha_i} } } & \text{if } m \nmid \mu_i(\beta); \\ \displaystyle{\frac { N_{\beta, i}(\mathbf x) }{ 1 -q^{m-1} \mathbf x^{m \alpha_i} } } & \text{otherwise}.
\end{cases}\] Here, as before, we denote by $(k)_m$ the remainder upon division of $k$ by $m$. By our convention, $1-q^{m-1}x_i^m$ is invertible in $\mathcal E$, with inverse $\sum_{k=0}^\infty q^{k(m-1)}x_i^{km}$.

\begin{Prop}[Compare with Theorem 3.6 in \cite{CGco}] \label{prop-frac}
We have
\[  {f_{\beta, i}( \sigma_i \mathbf x) } = \begin{cases} (q \mathbf x^{\alpha_i} )^{(\mu_i(\beta))_m - \mu_i(0) }  {f_{\beta, i}( \mathbf x) }& \text{if } m \nmid \mu_i(\beta); \\(q \mathbf x^{\alpha_i})^{m- \mu_i(0)} {f_{\beta, i}(  \mathbf x) }& \text{otherwise}.\end{cases} \]
\end{Prop}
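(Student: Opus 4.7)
The plan is to derive the transformation of $f_{\beta, i}$ from the $|_\lambda\sigma_i$-invariance of $h$ established in Proposition \ref{prop-inv}, by projecting the resulting identity along the $m\alpha_i\mathbb{Z}$-coset decomposition of $Q$ and then diagonalizing the induced linear system via $f_{\beta, i}$ and $f_{\sigma_i\cdot\beta, i}$.

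First, I would translate the invariance of $h$ into an identity for $N = hD$. Since $D \in \mathcal{B}_0 \subset \mathcal{E}'_0$, Lemma \ref{lem-act-3}(2) gives $(N|_\lambda \sigma_i)(\mathbf x) = D(\sigma_i \mathbf x)\,(h|_\lambda\sigma_i)(\mathbf x) = D(\sigma_i \mathbf x)\,h(\mathbf x) = (D(\sigma_i\mathbf x)/D(\mathbf x))N(\mathbf x)$. The computation of Lemma \ref{lem-delta}(1), adapted to $D$ and using that $\sigma_i$ separately permutes $\Phi^{\mathrm{re}}_+\setminus\{\alpha_i\}$ and $\Phi^{\mathrm{im}}_+$, shows that only the $\alpha_i$-factor of $D$ is affected, so that $D(\sigma_i \mathbf x)/D(\mathbf x) = (1 - q^{-m-1}\mathbf x^{-m\alpha_i})/(1 - q^{m-1}\mathbf x^{m\alpha_i})$. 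After clearing denominators this yields
\[ (1 - q^{m-1}\mathbf x^{m\alpha_i})(N|_\lambda\sigma_i)(\mathbf x) = (1 - q^{-m-1}\mathbf x^{-m\alpha_i})\,N(\mathbf x), \]
and since both parenthesized factors are supported in $m\alpha_i\mathbb{Z}$ I may project both sides onto the component supported in $S_{\beta, i}$ to obtain a slice identity I denote $(\star_\beta)$.

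To evaluate the $\beta$-slice of $(N|_\lambda\sigma_i)$, I unpack Definition \ref{def-action}: each monomial $\mathbf x^\mu$ contributes a $\mathcal{P}$-part supported in $S_{\mu, i}$ and a $\mathcal{Q}$-part supported in $S_{\sigma_i\cdot\mu, i}$, so $[N|_\lambda\sigma_i]_{\beta, i}$ collects $\mathcal{P}$-contributions from $N_{\beta, i}$ and $\mathcal{Q}$-contributions from $N_{\sigma_i\cdot\beta, i}$. Using the shift rule $\mathcal{P}_{\beta+km\alpha_i, i}(\mathbf x) = q^{-2km}\mathbf x^{-2km\alpha_i}\mathcal{P}_{\beta, i}(\mathbf x)$ and its $\mathcal{Q}$-analogue (where the $\gamma$-factor is preserved because $n \mid 2mb_i$ by the definition of $m = m(\alpha_i)$), each coset sum reassembles into $N_{\beta, i}(\sigma_i\mathbf x)$ or $N_{\sigma_i\cdot\beta, i}(\sigma_i\mathbf x)$ multiplied by explicit monomials and the geometric series $\sum_j(q^{m-1}\mathbf x^{m\alpha_i})^j$, which cancels the factor $(1 - q^{m-1}\mathbf x^{m\alpha_i})$ on the left of $(\star_\beta)$. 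In the case $m \nmid \mu_i(\beta)$, the same procedure applied to the $\sigma_i\cdot\beta$-slice yields a second identity $(\star_{\sigma_i\cdot\beta})$ in the same two unknowns; taking the linear combination of $(\star_\beta)$ and $(\star_{\sigma_i\cdot\beta})$ with coefficients $1$ and $-\gamma(-b_i\mu_i(\beta))(q\mathbf x^{\alpha_i})^{(-\mu_i(\beta))_m}$ collapses the right-hand sides into a multiple of the numerator of $f_{\beta, i}(\mathbf x)$, while the left-hand sides reassemble, after using that $\gamma(-b_i\mu_i(\beta))\gamma(b_i\mu_i(\beta)) \in \{1/q, 1\}$, into $(q\mathbf x^{\alpha_i})^{(\mu_i(\beta))_m - \mu_i(0)}$ times the numerator of $f_{\beta, i}(\sigma_i\mathbf x)$. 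The degenerate case $m \mid \mu_i(\beta)$ uses only $(\star_\beta)$ (since then $S_{\beta, i} = S_{\sigma_i\cdot\beta, i}$ and $N_{\beta, i} = N_{\sigma_i\cdot\beta, i}$) and yields directly the formula with exponent $m - \mu_i(0)$.

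The main obstacle is the algebraic bookkeeping in this slice computation: accurately tracking the interplay between the modular-arithmetic symbols $[\cdot]_m$ and $(\cdot)_m$, the Gauss-type coefficients $\gamma(\pm b_i\mu_i(\beta))$ (in particular the two subcases $n \mid b_i\mu_i(\beta)$ and $n \nmid b_i\mu_i(\beta)$), and the various shifts by powers of $q\mathbf x^{\alpha_i}$. Since the identity depends only on the $\alpha_i$-direction and reduces coset-by-coset to a statement about $\mathcal{P}$- and $\mathcal{Q}$-polynomials in the single variable $q\mathbf x^{\alpha_i}$, my plan is to match the calculation directly against the rank-one verification in the proof of Theorem 3.6 of \cite{CGco}; the minor redistribution of monomials used here to keep $\mathcal{P}_{\beta, i}, \mathcal{Q}_{\beta, i} \in \mathcal{E}'_0$ does not affect any of these identities.
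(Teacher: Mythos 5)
Your proposal is correct and follows essentially the same route as the paper: both derive the identity from the $|_\lambda\sigma_i$-invariance of $N(\mathbf x;\lambda)/(1-q^{m-1}\mathbf x^{m\alpha_i})$ (coming from Proposition \ref{prop-inv} together with the transformation of $D$ under $\mathbf x\mapsto\sigma_i\mathbf x$), restrict to the $m\alpha_i$-cosets $S_{\beta,i}$ and $S_{\sigma_i\cdot\beta,i}$, unwind the monomial action via the shift rules for $\mathcal P$ and $\mathcal Q$, and defer the final modular bookkeeping to the rank-one computation of \cite{CGco}. The only difference is organizational: the paper packages the two slices into the single invariant $F_{\beta,i}=(N_{\beta,i}+N_{\sigma_i\cdot\beta,i})/(1-q^{m-1}\mathbf x^{m\alpha_i})$, whereas you keep the slices separate and diagonalize the resulting $2\times 2$ system.
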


\begin{proof}
Assume that $m \nmid \mu_i(\beta)$. We write $N_{\beta, i} (\mathbf x) = \left ( \sum_{k \in \mathbb Z } a_{\beta + k m \alpha_i} \mathbf x^{k m \alpha_i} \right ) \mathbf x^\beta$ and
define \[ B_{\beta, i}(\mathbf x) =\sum_{k \in \mathbb Z } a_{\beta + k m \alpha_i} \mathbf x^{k m \alpha_i}  \in \mathcal{B}_0,\] so that we have $N_{\beta, i}(\mathbf x) = B_{\beta, i} (\mathbf x) \mathbf x^\beta$.
we also define \[ F_{\beta, i} (\mathbf x) = \frac{N_{\beta, i}(\mathbf x) + N_{\sigma_i \cdot \beta, i}(\mathbf x)}{1 - q^{m-1} \mathbf x^{m \alpha_i}} = \frac{B_{\beta, i}(\mathbf x) \mathbf x^\beta + B_{\sigma_i \cdot \beta, i}(\mathbf x) \mathbf x^{\sigma_i \cdot \beta} }{1 - q^{m-1} \mathbf x^{m \alpha_i}}.\] We obtain from Proposition \ref{prop-inv} that \[ \frac {N(\mathbf x ; \lambda) } {1 - q^{m-1} \mathbf x^{m \alpha_i} } \] is invariant under the action of $|_\lambda \sigma_i$. Indeed, the action is well-defined by the same argument as in the proof of Proposition \ref{prop-inv}. The invariance follows from that of $h(\mathbf x,\lambda)$ and the invariance of $D(\mathbf x)(1-q^{m-1}\mathbf x^{m\alpha_i})^{-1}$ under change of variable $\mathbf x\rightarrow \sigma_i\mathbf x$.
Then it implies that $F_{\beta , i}$ is invariant under $|_\lambda \sigma_i$. On the other hand, applying $\sigma_i$ to $F_{\beta, i}$, we get \[ F_{\beta, i}(\mathbf x) = (F_{\beta, i}|_\lambda \sigma_i)(\mathbf x) = \frac{B_{\beta, i}(\sigma_i \mathbf x ) (\mathbf x^\beta |_\lambda \sigma_i) (\mathbf x) + B_{\sigma_i \cdot \beta, i}(\sigma_i \mathbf x) \mathbf ( x^{\sigma_i \cdot \beta}|_\lambda \sigma_i)(\mathbf x) }{1 - q^{-m-1} \mathbf x^{-m \alpha_i}}. \] Using this, we compute further and obtain \[ f_{\beta, i}(\mathbf x) = \frac{B_{\beta, i}(\sigma_i \mathbf x ) (q \mathbf x^{\alpha_i} )^{[\mu_i(\beta)]_m} \mathbf x^\beta- B_{\sigma_i \cdot \beta, i}(\sigma_i \mathbf x) \gamma(-b_i \mu_i(\beta)) (q \mathbf x^{\alpha_i})^{-m - \mu_i(\beta)}
\mathbf x^{\sigma_i \cdot \beta} }{1 - q^{-m-1} \mathbf x^{-m \alpha_i}}. \] Now the assertion of the proposition follows from this.

The proof for the case that $m | \mu_i(\beta)$ is similar, and we omit the details.
\end{proof}

\section{The Coefficients $H$}

In this section we define the coefficients $H$ and find bounds for them. The coefficients $H$ will be the essential data in defining Weyl group multiple Dirichlet series.

\medskip

We specialize $\gamma(i)$ to be \[ \gamma(i) = \begin{cases} g(1, \varpi; i)/q & \text{if } i \not\equiv 0 \,(\mathrm{mod}\ n); \\ -1 & \text{otherwise} , \end{cases} \] where $\varpi$ is a prime in
$\mathfrak o_S$ and $q$ is the norm of $\varpi$ in $\mathfrak o_S$. We define \[ \varpi^\beta_Q = (\varpi^{k_1}, \cdots , \varpi^{k_r}) \in (\mathfrak o_S)^r \quad \text{ and } \quad \varpi^\lambda_P = (\varpi^{l_1}, \cdots , \varpi^{l_r}) \in (\mathfrak o_S)^r \] where $\beta = \sum_{i=1}^r k_i \alpha_i \in Q_+$ and $\lambda = \sum_{i=1}^r l_i \omega_i \in P_+$.  Denote the $\mathbf x^\beta$-coefficient of $N(\mathbf x; \lambda)$ by \begin{equation} \label{eqn-h1} H(\varpi_Q^\beta; \varpi_P^\lambda).\end{equation}

We fix a generator $\varpi$ for each prime ideal of $\mathfrak o_S$.
For $\mathbf a = (a_1, \cdots , a_r) \in (\mathfrak o_S)^r$, we have decompositions \begin{equation} \label{eqn-decom} \mathbf a = \mathbf u \prod_{\varpi} \varpi^{\beta_\varpi}_Q = \mathbf u \prod_{\varpi } \varpi^{\lambda_\varpi}_P , \qquad \mathbf u \in \left ( \mathfrak o_S^\times \right )^r,  \end{equation} with $\beta_\varpi \in Q_+$ and $\lambda_\varpi \in P_+$ for each prime $\varpi$. We define $H(\mathbf c; \mathbf m)$ for any $\mathbf c, \mathbf m \in  (\mathfrak o_S)^r  \cap (F_S^\times)^r$ in what follows. First we set $H(\mathbf u; \mathbf m)=1$ for $\mathbf u \in \left ( \mathfrak o_S^\times \right )^r$.
 If we have $\mathrm{gcd}(c_1 \cdots c_r, c'_1 \cdots c'_r) =1$ for $\mathbf c=(c_1, \cdots, c_r)$  and $\mathbf c'=(c'_1, \cdots, c'_r)$, we require that the twisted multiplicativity should hold:
\begin{equation} \label{eqn-h2} H(\mathbf{cc'}; \mathbf m) = \xi_B ( \mathbf c , \mathbf c'  ) H(\mathbf{c}; \mathbf m) H(\mathbf{c'}; \mathbf m).\end{equation} We also require the relation \begin{equation} \label{eqn-h3} H(\mathbf c; \mathbf{mm'}) = \left [ \frac{\mathbf m'}{\mathbf c} \right ]^{-B} H(\mathbf c ; \mathbf m) \end{equation} if $\mathrm{gcd}(c_1 \cdots c_r, m'_1 \cdots m'_r) =1$ for $\mathbf c=(c_1, \cdots, c_r)$ and $\mathbf m'=(m'_1, \cdots, m'_r)$. Now note that we have defined the coefficients $H(\mathbf c; \mathbf m)$ for any $\mathbf c, \mathbf m \in (\mathfrak o_S)^r  \cap (F_S^\times)^r$.

We fix $q$ for the time being. Let $\frak h_{\mathbb R} = \mathbb R \otimes P^\vee \subset \mathfrak h$. For $\mathbf s \in \frak h$, we write $\mathbf s = \frak{Re}(\mathbf s)+ \sqrt{-1} \ \frak{Im}(\mathbf s)$ with $\frak{Re}(\mathbf s), \frak{Im} (\mathbf s) \in \frak h_{\mathbb R}$.
We define the evaluation map $EV_q : \mathcal{E} \times \mathfrak h \rightarrow \mathbb C$ by
\[ EV_q \left ( \sum_\beta c(\beta) \mathbf x^\beta, \mathbf s \right ) = \sum_\beta c(\beta) q^{-\beta(\mathbf s)}, \] whenever it is convergent. Similarly, we define $|EV|_q : \mathcal{E} \times \mathfrak h \rightarrow \mathbb C$ by
\[ |EV|_q \left ( \sum_\beta c(\beta) \mathbf x^\beta, \mathbf s \right ) = \sum_\beta |c(\beta) q^{-\beta(\mathbf s)}| = \sum_\beta |c(\beta) | q^{-\beta(\frak{Re}(\mathbf s))} , \] whenever it is convergent.

\begin{Prop} \label{prop-abc}
Let $\beta \in Q$ and $w \in W$, and suppose that
\begin{equation} \label{eqn-important}
 \mathfrak{Re}(\alpha_i(\mathbf s)) >1 \qquad \text{ for each } i =1, ..., r.
\end{equation}
Then we have
\[ |EV|_q \left ( j(w, \mathbf x) (\mathbf x^\beta |_\lambda w) ,  \mathbf s \right )  \le 3^{\ell(w)} q^{-d(\beta)}q^{(w^{-1} \cdot \beta )(\rho^\vee - \mathfrak {Re} (\mathbf s))} .\]
\end{Prop}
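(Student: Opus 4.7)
The proof proceeds by induction on $\ell(w)$. The base case $w=1$ is immediate: $j(1,\mathbf x)=1$, $\mathbf x^\beta|_\lambda 1 = \mathbf x^\beta$, and $1\cdot\beta=\beta$, so both sides reduce to $q^{-\beta(\mathfrak{Re}(\mathbf s))}$ via $d(\beta)=\beta(\rho^\vee)$.

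For the inductive step, suppose the bound holds for $w$ and take $\sigma_i w$ with $\ell(\sigma_i w)=\ell(w)+1$, so that by (\ref{eqn-phi}) the root $w^{-1}\alpha_i$ is positive. By Lemma \ref{lem-delta}(2),
\[ j(\sigma_i w,\mathbf x)(\mathbf x^\beta|_\lambda\sigma_i w)(\mathbf x)=j(\sigma_i,w\mathbf x)\,j(w,\mathbf x)\,\bigl((\mathbf x^\beta|_\lambda\sigma_i)|_\lambda w\bigr)(\mathbf x), \]
with $\mathbf x^\beta|_\lambda\sigma_i = \mathcal{P}_{\beta,i}(\mathbf x)\mathbf x^\beta + \mathcal{Q}_{\beta,i}(\mathbf x)\mathbf x^{\sigma_i\cdot\beta}$. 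Expanding the right side as a sum of monomials $\sum_\gamma c_\gamma\mathbf x^\gamma$ and applying the inductive hypothesis to each $|EV|_q(j(w,\mathbf x)(\mathbf x^\gamma|_\lambda w),\mathbf s)$ reduces the estimate to a weighted sum indexed by $\gamma$, with overall factor $|EV|_q(j(\sigma_i,w\mathbf x),\mathbf s)=q^{m_i E}$ where $E:=(w^{-1}\alpha_i)(\rho^\vee-\mathfrak{Re}(\mathbf s))$ (using $d(\alpha)=\alpha(\rho^\vee)$). The key move is to regroup $\mathcal{Q}_{\beta,i}(\mathbf x)\mathbf x^{\sigma_i\cdot\beta}$ as
\[ \gamma(b_i\mu_i(\beta))q^{\mu_i(\beta)}(1-1/q)\sum_{k\ge 0}q^{k(m-1)}\mathbf x^{km\alpha_i+\sigma_i\cdot\beta}-\gamma(b_i\mu_i(\beta))q^{\mu_i(\beta)-m}\mathbf x^{\sigma_i\cdot\beta-m\alpha_i}, \]
so that the monomial expansion of $\mathbf x^\beta|_\lambda\sigma_i$ falls into three pieces: (A) the $\mathcal{P}$-sum over $k\ge 0$, (B) the regrouped $\mathcal{Q}$-sum over $k\ge 0$, and (C) a single ``boundary'' monomial.

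The crucial input is $E<0$: since $w^{-1}\alpha_i$ is a positive real root, writing it as a non-negative integer combination of simple roots and using $\mathfrak{Re}(\alpha_j(\mathbf s))>1=\alpha_j(\rho^\vee)$ gives strict negativity, which in turn guarantees convergence of the geometric series $\sum_{k\ge 0}q^{k(mE-1)}=(1-q^{mE-1})^{-1}$. Using the identity $(\sigma_i w)^{-1}\cdot\beta = w^{-1}\cdot\beta+\mu_i(\beta)\,w^{-1}\alpha_i$, the contribution of each piece to $|EV|_q$, divided by the target bound $q^{-d(\beta)}q^{((\sigma_i w)^{-1}\cdot\beta)(\rho^\vee-\mathfrak{Re}(\mathbf s))}$, simplifies to $(1-1/q)y^{m-r}/(1-y^m/q)$ for piece (A) (with $r=(\mu_i(\beta))_m\in[0,m-1]$), to $(1-1/q)y^m/(1-y^m/q)$ for (B), and exactly to $1$ for (C), where $y=q^E\in(0,1)$. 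The first two are bounded by $1$ via the elementary inequality $(q-1)y^{m-r}\le q-y^m$. Each piece is therefore bounded by $3^{\ell(w)}q^{-d(\beta)}q^{((\sigma_i w)^{-1}\cdot\beta)(\rho^\vee-\mathfrak{Re}(\mathbf s))}$; summing the three pieces produces the factor $3\cdot 3^{\ell(w)}=3^{\ell(\sigma_i w)}$, completing the induction. The main technical delicacy is the regrouping of $\mathcal{Q}_{\beta,i}\mathbf x^{\sigma_i\cdot\beta}$: treating its two halves as independent geometric sums would each contribute a factor up to $q/(q-1)$ and destroy the clean threefold bound.
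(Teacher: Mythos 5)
Your argument is correct and is essentially the paper's proof: the same induction on $\ell(w)$ via the cocycle relation, the same use of $w^{-1}\alpha_i\in\Phi_+$ to get $(w^{-1}\alpha_i)(\rho^\vee-\mathfrak{Re}(\mathbf s))<0$, and the identical regrouping $(1-u^{-m})(1-u^m/q)^{-1}=-u^{-m}+(1-1/q)(1-u^m/q)^{-1}$ of the $\mathcal{Q}$-factor. The only (cosmetic) difference is bookkeeping: the paper keeps $\mathcal{P}_{\beta,i}(w\mathbf x)$ and $\mathcal{Q}_{\beta,i}(w\mathbf x)$ as multiplicative prefactors and applies the inductive hypothesis to just $\mathbf x^\beta$ and $\mathbf x^{\sigma_i\cdot\beta}$, obtaining $3=1+2$, whereas you apply it monomial by monomial and obtain $3=1+1+1$.
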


\begin{proof} We may assume that $\mathbf s$ is real, i.e. $\mathbf s =\mathfrak{Re}(\mathbf s)$.
We use induction on the length of $w$. If $w=1$ then
\[ \left | EV \right |_q \left ( j(1, \mathbf x) (\mathbf x^\beta |_\lambda 1) , \mathbf s \right ) = \left | EV \right |_q (\mathbf x^\beta) = q^{-\beta(\mathbf s)} . \]
Now assume that $\ell(\sigma_j w) = \ell(w)+1$ for $w \in W$ and write $m=m(\alpha_j)$. We see from (\ref{eqn-phi}) that $w^{-1} \alpha_j$ is a positive root, and we get $w^{-1}\alpha_j(\rho^\vee - \mathbf s) <0$ from the assumption.
We consider
\begin{eqnarray}
\left | EV \right |_q \left ( \mathcal P_{\beta, j}(w \mathbf x) , \mathbf s \right ) &=& \left | EV  \right |_q \left ( \left ( q (w \mathbf x)^{\alpha_j} \right )^{[\mu_j(\beta)]_m} \frac{1 - 1/q}{1- \left ( q (w \mathbf x)^{\alpha_j} \right )^m /q } , \ \mathbf s \right ) \nonumber \\ &=& \left ( q^{ d(w^{-1} \alpha_j)} q^{-w^{-1}(\alpha_j)(\mathbf s)} \right )^{ [\mu_j (\beta)]_m} \frac {1 - 1/q}{1- \left ( q^{d(w^{-1} \alpha_j)} q^{-w^{-1}(\alpha_j)(\mathbf s) } \right )^m /q } \nonumber  \\ &=& \left ( q^{ w^{-1} \alpha_j(\rho^\vee - \mathbf s) } \right )^{ [\mu_j (\beta)]_m} \frac {1 - 1/q}{1- \left ( q^{w^{-1} \alpha_j(\rho^\vee - \mathbf s) } \right )^m /q } \nonumber \\ & \le & q^{[\mu_j (\beta)]_m  w^{-1} \alpha_j(\rho^\vee - \mathbf s) } , \label{eqn-well}
\end{eqnarray}
and
\begin{eqnarray}
\left | EV \right |_q \left ( \mathcal Q_{\beta, j}(w \mathbf x) , \mathbf s \right ) &\le& \left | EV \right |_q \left (q^{\mu_j(\beta)} \frac{1 - \left ( q (w \mathbf x)^{\alpha_j} \right )^{-m}}{1- \left ( q (w \mathbf x)^{\alpha_j} \right )^m /q } , \ \mathbf s \right ) \nonumber   \\ &=&  q^{\mu_j(\beta)} \left | EV \right |_q \left (   - \left ( q (w \mathbf x)^{\alpha_j} \right )^{-m} +   \frac{1 - 1/q }{1- \left ( q (w \mathbf x)^{\alpha_j} \right )^m /q } , \ \mathbf s \right ) \nonumber     \\   & \le & q^{\mu_j(\beta)}\left (  q^{ -m \, w^{-1} \alpha_j(\rho^\vee - \mathbf s)}   + \frac{1-1/q}{1-  q^{m \,  w^{-1} \alpha_j(\rho^\vee - \mathbf s)}/q } \right )  \nonumber  \\ & = & q^{\mu_j (\beta) - m \, w^{-1} \alpha_j(\rho^\vee - \mathbf s) } \  \frac{1+ q^{ m \, w^{-1} \alpha_j(\rho^\vee - \mathbf s)} (1-2/q) }{1-  q^{ m \,  w^{-1} \alpha_j(\rho^\vee - \mathbf s)}/q }  \nonumber \\ & \le & 2 q^{\mu_j (\beta) - m \, w^{-1} \alpha_j(\rho^\vee - \mathbf s) } . \label{eqn-well-1}
\end{eqnarray}
Combining  (\ref{eqn-well}) and (\ref{eqn-well-1}) with the computation \begin{eqnarray*} j(\sigma_j w , \mathbf x ) (\mathbf x^\beta |_\lambda \sigma_j w) &=& j(\sigma_j , w \mathbf x) j(w , \mathbf x) \left [  \left ( \mathcal  P_{\beta , j} (\mathbf x) \mathbf x^\beta + \mathcal Q_{\beta, j} \mathbf x^{\sigma_j \cdot \beta} \right ) |_\lambda w \right ] \\ &=& - q^m (w \mathbf x)^{m \alpha_j} j(w, \mathbf x) \left [ \mathcal P_{\beta, j}(w \mathbf x) (\mathbf x^\beta |_\lambda w) + \mathcal Q_{\beta, j}(w \mathbf x) ( \mathbf x^{\sigma_j \cdot \beta}|_\lambda w) \right ],
\end{eqnarray*}
we obtain by induction
\begin{eqnarray*}
 & & \left | EV \right |_q \left ( j(\sigma_j w, \mathbf x) (\mathbf x^\beta |_\lambda \sigma_j w) , \mathbf s \right ) \\ & \le & q^{(m+ [\mu_j (\beta)]_m) w^{-1} \alpha_j(\rho^\vee - \mathbf s)}  \ \left | EV \right |_q \left (  j(w, \mathbf x)(\mathbf x^\beta |_\lambda w)  , \mathbf s \right )  \\ & & \phantom{LLLLLLLLLLL} + 2 q^{\mu_j(\beta)}  \ \left  | EV \right |_q \left (  j(w, \mathbf x)(\mathbf x^{\sigma_j \cdot \beta} |_\lambda w)  , \mathbf s \right )   \\ & \le & 3^{\ell(w)} q^{-d(\beta)}q^{(m+ [\mu_j (\beta)]_m)  w^{-1} \alpha_j(\rho^\vee - \mathbf s) +(w^{-1} \cdot \beta )(\rho^\vee - \mathbf s) } \\ & & \phantom{LLLLLLLLLLL}  + 2\cdot 3^{\ell(w)} q^{-d(\beta)}q^{(w^{-1} \cdot \sigma_j \cdot \beta ) (\rho^\vee - \mathbf s)}  .
\end{eqnarray*}
The exponent of $q$ in the first term becomes
\begin{eqnarray*} & & (m+ [\mu_j (\beta)]_m)  w^{-1} \alpha_j(\rho^\vee - \mathbf s) +(w^{-1} \cdot \beta )(\rho^\vee - \mathbf s) \\ & \leq & \mu_j(\beta)  w^{-1} \alpha_j(\rho^\vee - \mathbf s)+(w^{-1} \cdot \beta )(\rho^\vee - \mathbf s)\\ & =&\left ( (\sigma_j w)^{-1} \cdot \beta ) \right ) (\rho^\vee -\mathbf s)  .\end{eqnarray*}
The exponent of $q$ in the second term is
\[ (w^{-1} \cdot \sigma_j \cdot \beta ) (\rho^\vee - \mathbf s) = \left ( (\sigma_j w)^{-1} \cdot \beta ) \right ) (\rho^\vee -\mathbf s) .\]

Therefore,  we obtain
\[
\left | EV \right |_q \left ( j(\sigma_j w, \mathbf x) (\mathbf x^\beta |_\lambda \sigma_j w) , \mathbf s \right ) \le 3^{\ell(w)+1} q^{-d(\beta)}q^{\left ( (\sigma_j w)^{-1} \cdot \beta ) \right )(\rho^\vee - \mathbf s)}.
\]
\end{proof}

Given $\beta \in Q_+$, we define \[ N^{(\varpi)}_{\beta, i} (\mathbf x; \mathbf m) = \sum_{j \ge 0} H(\varpi_Q^{\beta + jm\alpha_i}; \mathbf m) \mathbf x^{\beta + jm\alpha_i} ,\] where $m= m(\alpha_i)$. Let $\lambda \in P_+$ be such that $\varpi^\lambda_P$ is the $\varpi$-factor in the decomposition of $\mathbf m$, i.e. $\lambda = \lambda_\varpi$ in  (\ref{eqn-decom}) . We write \[ \mathbf m' = \mathbf m / \varpi_P^\lambda = (m'_1, \cdots , m'_r) ,\] and set $\mu_i(\beta)=\mu_{i,\lambda}(\beta) = (\lambda + \rho -\beta)(h_i)$ as before. We put $q= |\varpi |$
and  define \[ f^{(\varpi)}_{\beta, i} (\mathbf x; \mathbf m) = \begin{cases} \displaystyle{\frac{N^{(\varpi)}_{\beta, i} (\mathbf x; \mathbf m) - q^{-1} g(m'_i, \varpi; -b_i \mu_i(\beta)) (q \mathbf x^{\alpha_i})^{(-\mu_i(\beta))_m} N^{(\varpi)}_{\sigma_i \cdot \beta, i} (\mathbf x; \mathbf m) } {1 - q^{m-1} \mathbf x^{m \alpha_i} } } & \text{if } \mu_i(\beta) \nmid m ; \\ \displaystyle{\frac{N^{(\varpi)}_{\beta, i} (\mathbf x; \mathbf m)  } {1 - q^{m-1} \mathbf x^{m \alpha_i} } } & \text{otherwise }.
\end{cases} \]

\begin{Thm} [Compare with Theorem 4.1 in \cite{CGco}] \label{thm-3-last}
We have
\[{f^{(\varpi)}_{\beta, i}( \sigma_i \mathbf x ; \mathbf m ) }= \begin{cases} (q \mathbf x^{\alpha_i} )^{ (\mu_i(\beta))_m - \mu_i(0)}  {f^{(\varpi)}_{\beta, i} (\mathbf x; \mathbf m ) } & \text{if } m \nmid \mu_i(\beta); \\(q \mathbf x^{\alpha_i})^{m-\mu_i(0) } {f^{(\varpi)}_{\beta, i} (\mathbf x; \mathbf m ) } & \text{otherwise}.\end{cases} \]
\end{Thm}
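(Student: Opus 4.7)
The plan is to reduce the identity to Proposition \ref{prop-frac} by showing that, when $\lambda\in P_+$ is chosen so that $\varpi_P^\lambda$ is the $\varpi$-factor of $\mathbf m$, the function $f^{(\varpi)}_{\beta,i}(\mathbf x;\mathbf m)$ differs from the function $f_{\beta,i}(\mathbf x)$ of Proposition \ref{prop-frac} (built from this same $\lambda$) only by a scalar independent of $\mathbf x$. Writing $\mathbf m=\varpi_P^\lambda\mathbf m'$ with $\gcd(m_1'\cdots m_r',\varpi)=1$, the twisted multiplicativity \eqref{eqn-h3} gives
\[H(\varpi_Q^{\gamma};\mathbf m)=\left[\frac{\mathbf m'}{\varpi_Q^{\gamma}}\right]^{-B}H(\varpi_Q^{\gamma};\varpi_P^\lambda),\qquad \gamma\in Q_+.\]
The key observation is that $\left[\mathbf m'/\varpi_Q^{\beta+jm\alpha_i}\right]^{-B}$ is \emph{constant} in $j\ge0$: the only $j$-dependent factor is $(m_i'/\varpi^{jm})^{-b_i}=(m_i'/\varpi)^{-jmb_i}$, which equals $1$ because $m=m(\alpha_i)=n/\gcd(n,b_i)$ forces $n\mid m b_i$. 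Hence $N^{(\varpi)}_{\beta,i}(\mathbf x;\mathbf m)=[\mathbf m'/\varpi_Q^\beta]^{-B}\,N_{\beta,i}(\mathbf x)$, where $N_{\beta,i}$ is the quantity of Proposition \ref{prop-frac} attached to $N(\mathbf x;\lambda)$. A parallel computation for the shifted progression yields
\[N^{(\varpi)}_{\sigma_i\cdot\beta,i}(\mathbf x;\mathbf m)=\left[\frac{\mathbf m'}{\varpi_Q^\beta}\right]^{-B}\left(\frac{m_i'}{\varpi}\right)^{-b_i\mu_i(\beta)}N_{\sigma_i\cdot\beta,i}(\mathbf x),\]
the extra residue factor arising from the shift $\sigma_i\cdot\beta-\beta=\mu_i(\beta)\alpha_i$.

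Next, the Gauss sum coefficient is handled by the standard transformation $g(a,c;t)=(a/c)^{-t}g(1,c;t)$ for $\gcd(a,c)=1$: when $m\nmid\mu_i(\beta)$ one verifies $n\nmid b_i\mu_i(\beta)$ (once more from $m=n/\gcd(n,b_i)$), so
\[q^{-1}g(m_i',\varpi;-b_i\mu_i(\beta))=\left(\frac{m_i'}{\varpi}\right)^{b_i\mu_i(\beta)}\gamma(-b_i\mu_i(\beta)).\]
Substituting into the definition of $f^{(\varpi)}_{\beta,i}(\mathbf x;\mathbf m)$, the two factors $(m_i'/\varpi)^{\pm b_i\mu_i(\beta)}$ cancel, and factoring out the common $[\mathbf m'/\varpi_Q^\beta]^{-B}$ one recognizes the numerator of $f_{\beta,i}(\mathbf x)$. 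Since the denominators agree, this proves
\[f^{(\varpi)}_{\beta,i}(\mathbf x;\mathbf m)=\left[\frac{\mathbf m'}{\varpi_Q^\beta}\right]^{-B}f_{\beta,i}(\mathbf x).\]
The case $m\mid\mu_i(\beta)$ is immediate, since only $N^{(\varpi)}_{\beta,i}$ appears in the definition.

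Because the scalar $[\mathbf m'/\varpi_Q^\beta]^{-B}$ does not involve $\mathbf x$, the desired transformation of $f^{(\varpi)}_{\beta,i}$ under $\mathbf x\mapsto\sigma_i\mathbf x$ is inherited directly from Proposition \ref{prop-frac}. The main subtlety I expect is the precise matching between the power residue symbols produced by twisted multiplicativity and those produced by the Gauss sum conversion; this matching hinges on the single arithmetic fact $n\mid b_i\,m(\alpha_i)$, which is exactly the compatibility built into the definitions of $m(\alpha)$ and of the lattice $Q'$ (cf.\ Lemma \ref{lem-lattice}). Once this cancellation is in place, the rest of the argument is purely formal.
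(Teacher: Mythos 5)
Your proposal is correct and follows essentially the same route as the paper: both reduce to Proposition \ref{prop-frac} by proving $N^{(\varpi)}_{\beta,i}(\mathbf x;\mathbf m)=[\mathbf m'/\varpi_Q^\beta]^{-B}N_{\beta,i}(\mathbf x)$ and $f^{(\varpi)}_{\beta,i}(\mathbf x;\mathbf m)=[\mathbf m'/\varpi_Q^\beta]^{-B}f_{\beta,i}(\mathbf x)$ via twisted multiplicativity, the vanishing of the $j$-dependent symbol from $n\mid m b_i$, and the Gauss sum transformation. Your explicit check that $m\nmid\mu_i(\beta)$ forces $n\nmid b_i\mu_i(\beta)$ (so that $\gamma(-b_i\mu_i(\beta))=q^{-1}g(1,\varpi;-b_i\mu_i(\beta))$) is a detail the paper leaves implicit, but otherwise the arguments coincide.
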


\begin{proof}

We first establish the following identities:
\begin{equation} \label{eqn-jjj}
 N^{(\varpi)}_{\beta, i} (\mathbf x; \mathbf m) = \left [ \frac{\mathbf m'}{\varpi^\beta_Q} \right ]^{-B} N_{\beta, i}( \mathbf x) \quad \text{ and } \quad f^{(\varpi)}_{\beta, i} (\mathbf x; \mathbf m) = \left [ \frac{\mathbf m'}{\varpi^\beta_Q} \right ]^{-B} f_{\beta, i}( \mathbf x) .
\end{equation}

From the twisted multiplicativity, we obtain  \begin{eqnarray*} H(\varpi_Q^{\beta +j m \alpha_i}; \mathbf m) &=& H(\varpi_Q^{\beta +j m \alpha_i};
 \varpi_P^\lambda \mathbf m')  \\ &=&   \left [ \frac{\mathbf m'}{\varpi^{\beta+jm\alpha_i}_Q} \right ]^{-B} H(\varpi_Q^{\beta +j m \alpha_i}; \varpi_P^\lambda ) \\ & =&   \left [ \frac{\mathbf m'}{\varpi^{\beta}_Q} \right ]^{-B} H(\varpi_Q^{\beta +j m \alpha_i}; \varpi_P^\lambda ) ,
\end{eqnarray*} since $\displaystyle{ \left ( \frac{ m'_i} {\varpi^{jm}} \right )^{-b_i} =\left ( \frac{ m'_i} {\varpi^{j}} \right )^{-b_i m } = 1 }$.
Then we have \[ N^{(\varpi)}_{\beta, i} (\mathbf x; \mathbf m) = \left [ \frac{\mathbf m'}{\varpi^\beta_Q} \right ]^{-B}
\sum_{j \ge 0} H(\varpi_Q^{\beta +j m \alpha_i}; \varpi_P^\lambda ) \mathbf x^{\beta + jm \alpha_i} = \left [ \frac{\mathbf m'}{\varpi^\beta_Q} \right ]^{-B} N_{\beta, i} (\mathbf x) .\]

Since we have $\sigma_i \cdot \beta = \beta +\mu_i(\beta) \alpha_i$ and  the multiplicativity of the power residue symbol, we obtain \[   N^{(\varpi)}_{\sigma_i \cdot \beta, i} (\mathbf x; \mathbf m) = \left [ \frac{\mathbf m'}{\varpi^{\sigma_i \cdot \beta}_Q} \right ]^{-B} N_{\sigma_i \cdot \beta, i} (\mathbf x) = \left [ \frac{\mathbf m'}{\varpi^{\beta}_Q} \right ]^{-B} \left ( \frac{m'_i}{\varpi } \right )^{- b_i \mu_i(\beta ) } N_{\sigma_i \cdot \beta, i} (\mathbf x) .\] On the other hand, \[ g(m'_i , \varpi ; - b_i \mu_i(\beta)) =  \left ( \frac{m'_i}{\varpi } \right )^{ b_i \mu_i(\beta) } g(1, \varpi; -b_i \mu_i(\beta)).\] Now it it straightforward to see that
\[f^{(\varpi)}_{\beta, i} (\mathbf x; \mathbf m) = \left [ \frac{\mathbf m'}{\varpi^\beta_Q} \right ]^{-B} f_{\beta, i}( \mathbf x) .\]

Thus we have proved the identities in (\ref{eqn-jjj}).
Now the theorem follows from (\ref{eqn-jjj}) and Proposition \ref{prop-frac}.
\end{proof}

\section{Rank-One Computations}

In this section, we gather results from \cite{BB}, \cite{BBF} and \cite{CGco} and make some computations as a preparation to obtain functional equations of the Weyl group multiple Dirichlet series in the next section.

\medskip

For $j \in \mathbb Z_{>0}$, $\Psi \in \mathcal{M}_j(\Omega)$ and $a \in \mathfrak o_S$, we define
\[ \mathcal{D}(s,a ; \Psi , j) = \sum_{0 \neq \, c \, \in \frak o_S/ \frak o_S^\times} g(a,c;j) \Psi(c) |c|^{-s} |a|^{s/2}  .\] This series is absolutely convergent for $\frak{Re}(s) > 3/2$.
Let $m = n/\mathrm{gcd}(n,j)$ and set
\[ G_m(s) = \left (  ( 2 \pi )^{-(m -1)( s-1) } \Gamma (m s-m) /\Gamma (s-1)  \right )^{ [F: \mathbb Q]/2}.\]
Define  \[ \mathcal{D}^*(s, a ; \Psi, j)= G_m(s) \zeta_F(m s -m+1) \mathcal{D}(s,a,\Psi, j),\]
where $\zeta_F$ is the Dedekind zeta function of $F$.
If $\Psi \in  \mathcal{M}_j(\Omega)$ and $\eta \in F^\times_S$, we define \[  \tilde{\Psi}_\eta(c) = (\eta , c)^j_S \Psi ( \eta^{-1} c^{-1} ).\] The following result is fundamental.

\begin{Thm} \cite{BB} \label{thm-pp}
The function $\mathcal{D}^*(s,a;\Psi, j)$ has a meromorphic continuation to $\mathbb C$ and is holomorphic except for possible simple poles at $s= 1 \pm 1/m$. Moreover, there exist $S$-Dirichlet polynomials
$P(s; a \eta, j) $  such that \[
\mathcal{D}^*(s,a; \Psi, j)=\sum_{ \eta \in F_S^\times / F_S^{\times, n} }P(s; a \eta, j) \mathcal{D}^*(2-s,a;\tilde{\Psi}_\eta,j).\]
\end{Thm}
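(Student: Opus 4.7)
The plan is to realize $\mathcal{D}^*(s,a;\Psi,j)$ as a Whittaker--Fourier coefficient of a metaplectic Eisenstein series on the $m$-fold cover of $GL_2$ over $F$, and then derive meromorphic continuation and the functional equation from the corresponding properties of the Eisenstein series. This is the classical Kubota--Patterson strategy as refined by Brubaker--Bump in the reference \cite{BB}, and in the rank-one case it is the cleanest route available.

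First I would construct an Eisenstein series $E(g,s,\Psi)$ induced from an appropriate quasi-character on the Borel of the $m$-fold metaplectic cover $\widetilde{GL_2}$, with ramified local data at the places in $S$ encoded by $\Psi \in \mathcal{M}_j(\Omega)$. Unfolding the $\psi$-Whittaker coefficient with character $x \mapsto \psi(ax)$ produces a sum over $c \in \mathfrak{o}_S/\mathfrak{o}_S^\times$ with summand $g(a,c;j)\Psi(c)|c|^{-s}$ coming from the Bruhat cell evaluated at a diagonal element $\mathrm{diag}(c,1)$; multiplying by $|a|^{s/2}$ to symmetrize recovers $\mathcal{D}(s,a;\Psi,j)$. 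The completion factors appear naturally from the unramified local Whittaker integrals: the product over primes outside $S$ of the local factors collapses, by a Kubota-type geometric series, to $\zeta_F(ms-m+1)^{-1}$, and the archimedean Whittaker integrals produce exactly $G_m(s)$. Hence $\mathcal{D}^*(s,a;\Psi,j)$ is, up to harmless normalizations, the completed Whittaker coefficient of $E(g,s,\Psi)$.

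Next, Langlands--Selberg theory gives meromorphic continuation of $E(g,s,\Psi)$ to all of $\mathbb{C}$, with poles exactly where the inducing character hits the edge of unitarity, which in the metaplectic $m$-cover occurs at $s = 1 \pm 1/m$; these are simple and transport to simple poles of $\mathcal{D}^*$. The standard intertwining operator yields $E(g,s,\Psi) = M(s)E(g,2-s,\Psi)$, and applying the Whittaker functional to both sides translates this into a functional equation for $\mathcal{D}^*$. The key metaplectic phenomenon is that the local intertwining operator acts on a finite-dimensional space of Whittaker functionals rather than by a scalar, which is what forces the right-hand side to be a \emph{sum} indexed by $\eta \in F_S^\times/F_S^{\times,n}$; the Hilbert-symbol twist in the definition of $\tilde{\Psi}_\eta$ is precisely what tracks how the Whittaker functional transforms under the intertwiner.

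The hard part will be the explicit calculation of the scattering matrix entries $P(s;a\eta,j)$. These are local integrals of Whittaker functionals at the (finitely many) places in $S$, and one must show that, after combining them, each entry is an $S$-Dirichlet polynomial, that the Hilbert-symbol factors assemble correctly into the twist defining $\tilde{\Psi}_\eta$, and that the indexing by $F_S^\times/F_S^{\times,n}$ is exactly what the local calculation produces. Verifying the precise symmetry $s \leftrightarrow 2-s$ requires careful bookkeeping between the archimedean $\Gamma$-factors hidden in $G_m(s)$ and the contribution of $\zeta_F(ms-m+1)$; any normalization slip shifts the location of the poles off $1 \pm 1/m$. I would handle this by treating the product $G_m(s)\zeta_F(ms-m+1)$ as a single completed $L$-factor and checking the reflection $s \mapsto 2-s$ at the level of local gamma factors place by place, exactly as in \cite{BB}.
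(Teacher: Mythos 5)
The paper does not prove this statement at all: it is imported verbatim from \cite{BB}, so there is no internal proof to compare against. Your sketch correctly reproduces the strategy of that reference (Whittaker coefficients of metaplectic Eisenstein series, Langlands--Selberg continuation, and the finite-dimensional space of Whittaker functionals forcing the sum over $\eta\in F_S^\times/F_S^{\times,n}$), though as written it is an outline that defers the scattering-matrix computations rather than a self-contained proof.
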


Let $\mathbf m =(m_1, \cdots , m_r) \in (\mathfrak o_S)^r \cap (F_S^\times)^r$ be fixed for the rest of this section. Let $\mathfrak A$ be the ring of Laurent polynomials in $|\varpi_v|^{s_i}$, $i=1, ..., r$, where $v$ runs over the places in $S_{\mathrm{fin}}$.
We define \begin{equation} \label{eqn-function-space} \frak{M}_B(\Omega)=\mathfrak{A} \otimes \mathcal{M}_B(\Omega) .\end{equation} Then it is natural to set $\frak{M}_t (\Omega)=\mathfrak{A} \otimes \mathcal{M}_t(\Omega)$ for $t \in \mathbb Z_{>0}$. We write \[ s_i= \alpha_i(\bf s)\qquad \text{ for } \mathbf s \in \frak h \]
and regard an element of $\frak{M}_B(\Omega)$ as a function on $\mathfrak h \times (F_S^\times)^r$.
Denote by $\iota$ the diagonal embedding: \[ \iota : F_S^\times \rightarrow (F_S^\times)^r, \qquad x \mapsto (x, x, \cdots , x) .\]
If $\Psi \in \frak M_B(\Omega)$ and $\mathbf a = (a_1, \cdots , a_r) \in ( \frak o_S /\frak o_S^\times)^r$, we define
\[ \Psi_i^{\mathbf a}(\mathbf s; c) = (\mathbf a, \iota(c))_{S,i}^B \ \Psi( \mathbf s; a_1, \cdots, a_i c , \cdots , a_r) ,\] where the notation $(\cdot, \cdot)^B_{S,i}$ is defined in (\ref{eqn-prev}).

\begin{Lem} \cite{BBF}
We have $\Psi_i^{\mathbf a} \in \frak M_{b_i}(\Omega)$.
\end{Lem}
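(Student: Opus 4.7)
The key observation is that $\mathfrak{M}_{b_i}(\Omega) = \mathfrak{A} \otimes \mathcal{M}_{b_i}(\Omega)$ carries the $\mathbf{s}$-dependence only through the $\mathfrak{A}$ tensor factor, which is untouched by the construction (since $\Psi^{\mathbf{a}}_i$ is built from $\Psi$ by multiplying by a Hilbert-symbol factor that is independent of $\mathbf{s}$ and by substituting in the $c$-slot). Writing $\Psi = \sum_k p_k(\mathbf{s})\,\phi_k$ with $p_k \in \mathfrak{A}$ and $\phi_k \in \mathcal{M}_B(\Omega)$, the construction is linear in this decomposition, so I would reduce to the following purely $c$-dependent statement: for any $\phi \in \mathcal{M}_B(\Omega)$, the function $\phi_i^{\mathbf{a}}$ defined by the same formula satisfies $\phi_i^{\mathbf{a}}(ec) = (e,c)_S^{b_i}\phi_i^{\mathbf{a}}(c)$ for all $e \in \Omega$ and $c \in F_S^\times$.

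Next, I would carry out this verification by a direct computation. Starting from
\[
\phi_i^{\mathbf{a}}(ec) = (\mathbf{a}, \iota(ec))_{S,i}^B\,\phi(a_1, \ldots, a_i ec, \ldots, a_r),
\]
I would split the Hilbert-symbol prefactor by bilinearity as $(\mathbf{a}, \iota(ec))_{S,i}^B = (\mathbf{a}, \iota(e))_{S,i}^B \cdot (\mathbf{a}, \iota(c))_{S,i}^B$, the latter recombining with the value of $\phi$ in due course to reproduce $\phi_i^{\mathbf{a}}(c)$. At the same time, I would factor the argument of $\phi$ as $\mathbf{e}' \cdot (a_1,\ldots,a_i c,\ldots,a_r)$, where $\mathbf{e}' \in \Omega^r$ has $e$ in coordinate $i$ and $1$ in all others. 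Applying the defining cocycle law $\phi(\mathbf{e}'\mathbf{c}) = (\mathbf{e}',\mathbf{c})_S^B\,\phi(\mathbf{c})$ and noting that only the $k=i$ summand in $(\mathbf{e}',\mathbf{c})_S^B = \prod_k (\mathbf{e}',\mathbf{c})_{S,k}^B$ survives (because $e'_k = 1$ for $k \neq i$), this factor collapses to $(e,a_i c)_S^{b_i}\prod_{j > i}(e,a_j)_S^{2b_{ij}}$.

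Assembling the two steps, the extra prefactor multiplying $\phi_i^{\mathbf{a}}(c)$ becomes
\[
(\mathbf{a}, \iota(e))_{S,i}^B \cdot (e, a_i c)_S^{b_i}\prod_{j > i}(e, a_j)_S^{2b_{ij}},
\]
and the goal is to simplify this to $(e,c)_S^{b_i}$. I would expand $(\mathbf{a}, \iota(e))_{S,i}^B = (a_i,e)_S^{b_i}\prod_{j>i}(a_i,e)_S^{2b_{ij}}$ and apply the skew-symmetry $(x,y)_S(y,x)_S = 1$ to cancel $(a_i,e)_S^{b_i}(e,a_i)_S^{b_i}$; the exponent pattern $b_i + 2\sum_{j>i}b_{ij}$ of $(\mathbf{a}, \iota(e))_{S,i}^B$ is exactly what is needed to annihilate the cross contributions $\prod_{j>i}(e,a_j)_S^{2b_{ij}}$, leaving the required $(e,c)_S^{b_i}$. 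The main obstacle in this plan is the index bookkeeping for the asymmetric pairing $(\cdot,\cdot)^B_{S,i}$; the design of this pairing (with the restriction $j > i$ and the factor of $2$) is precisely what makes the cancellation work, and the detailed calculation is the one carried out in \cite{BBF}.
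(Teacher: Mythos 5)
The paper offers no argument for this lemma at all---it is quoted verbatim from \cite{BBF}---so you are supplying a verification the authors omit, and it has to be judged on its own. Your reduction to the $\mathcal M_B(\Omega)$-part (the $\mathfrak A$-factor is inert) and your identification of the two Hilbert-symbol contributions are both correct: with $\mathbf e'$ the tuple having $e$ in slot $i$ and $1$ elsewhere, the paper's definitions give
\[
\frac{\Psi_i^{\mathbf a}(\mathbf s; ec)}{\Psi_i^{\mathbf a}(\mathbf s; c)}
=\Bigl((a_i,e)_S^{b_i}\prod_{j>i}(a_i,e)_S^{2b_{ij}}\Bigr)\Bigl((e,a_ic)_S^{b_i}\prod_{j>i}(e,a_j)_S^{2b_{ij}}\Bigr),
\]
and the diagonal pair $(a_i,e)_S^{b_i}(e,a_i)_S^{b_i}=1$ does cancel by skew-symmetry. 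The gap is your final step. Skew-symmetry kills a product $(x,y)_S(y,x)_S$ only when the two symbols carry the \emph{same} pair of arguments, but here every factor of $(\mathbf a,\iota(e))_{S,i}^B$ has first slot $a_i$ (see (\ref{eqn-prev}): the pairing uses only $x_i$), while the cross terms coming from the cocycle are $(e,a_j)_S^{2b_{ij}}$ with $j\neq i$. What actually survives is $\prod_{j>i}\bigl((a_i,e)_S\,(a_j,e)_S^{-1}\bigr)^{2b_{ij}}$, which is not $1$ in general: writing $e=uf^n\in\Omega$ one has $(x,e)_S=(x,u)_S$, and by reciprocity this is essentially a power residue symbol of $u$ modulo $x$, which genuinely depends on $x$. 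So the "exponent pattern'' does not by itself annihilate the cross contributions.

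Closing the gap requires the precise conventions of \cite{BBF}, where the auxiliary factor attached to the $i$-th slot reverses the order of the arguments of the Hilbert symbol as $j$ crosses $i$ (schematically $\prod_{j<i}(a_j,\cdot)\prod_{j>i}(\cdot,a_j)$), so that each cross term from the prefactor meets its counterpart from the cocycle with matching arguments and skew-symmetry genuinely applies; with the pairing exactly as displayed in (\ref{eqn-prev}) your computation terminates with the residual factor above. Since this cancellation is the entire content of the lemma, asserting that the bookkeeping works out and deferring the "detailed calculation'' to \cite{BBF} leaves the essential point unproved.
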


\medskip

We define a shifted action of $W$ on $\frak h$ by \[ \sigma_i \circ \mathbf s = \sigma_i(\mathbf s - \rho^\vee) + \rho^\vee. \] Now we define an action of $\sigma_i$ on $\frak{M}_B(\Omega)$ as follows. For $\Psi \in \frak{M}_B(\Omega)$, we set
\[
(\sigma_i \Psi) (\mathbf s; \mathbf a) = \sum_{\eta \in
F_S^\times / F_S^{\times, n} } (\iota(\eta) , \mathbf a)_{S,i}^B \ P( s_i \, ; \, \eta m_i \mathbf a^{-h_i}, b_i) \ \Psi ( \sigma_i \circ \mathbf s; a_1, \cdots,  a_i \eta^{-1}, \cdots , a_r)  \]
where \[\mathbf a^{-h_i} = \prod_j a_j^{-\alpha_j(h_i)} =\prod_j a_j^{-a_{ij}}.\]
The element $\mathbf a^{-h_i}$ was denoted by $b_i$ in \cite{CGco}, while we set $b_i= (\alpha_i, \alpha_i)$ in this paper.

The following proposition plays a crucial role in describing functional equations.
\begin{Prop} \cite{BBF}
If $\Psi \in \frak M_B(\Omega)$ then $\sigma_i \Psi \in \frak M_B(\Omega)$.
\end{Prop}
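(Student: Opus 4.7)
The plan is to verify the two defining conditions for $\mathfrak{M}_B(\Omega)$ separately: that $(\sigma_i\Psi)(\mathbf{s};\mathbf{a})$ still lies in $\mathfrak{A}$ as a function of $\mathbf{s}$, and that for each fixed $\mathbf{s}$ it satisfies the twisted transformation law of $\mathcal{M}_B(\Omega)$ in $\mathbf{a}$.

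\medskip

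\noindent\textbf{Polynomial dependence in $\mathbf{s}$.} A short computation gives $\alpha_j(\sigma_i\circ\mathbf{s})=s_j-a_{ij}(s_i-1)$, so $\mathbf{s}\mapsto\sigma_i\circ\mathbf{s}$ is an affine substitution sending each $|\varpi_v|^{s_j}$ to a monomial in the $|\varpi_v|^{s_k}$ times a scalar. Since $P(s_i;\cdot,b_i)$ is an $S$-Dirichlet polynomial in $s_i$ by Theorem~\ref{thm-pp} and the sum over $\eta\in F_S^\times/F_S^{\times,n}$ is finite, the result remains in $\mathfrak{A}\otimes\mathcal{M}_B(\Omega)$.

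\medskip

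\noindent\textbf{Transformation law.} Fix $\mathbf{s}$ and write $\mathbf{a}=\mathbf{ec}$ with $\mathbf{e}\in\Omega^r$. Let $\mathbf{c}(\eta)$ denote the tuple obtained from $\mathbf{c}$ by replacing its $i$th coordinate $c_i$ with $c_i\eta^{-1}$. Expanding the definition of $\sigma_i\Psi$ and applying the $\mathcal{M}_B(\Omega)$-transformation law of $\Psi$ to the argument $\mathbf{e}\,\mathbf{c}(\eta)$ extracts a factor $(\mathbf{e},\mathbf{c}(\eta))_S^B$. Using bimultiplicativity of the Hilbert symbol one factors
\[
(\mathbf{e},\mathbf{c}(\eta))_S^B=(\mathbf{e},\mathbf{c})_S^B\cdot X(\mathbf{e},\eta),\qquad (\iota(\eta),\mathbf{ec})_{S,i}^B=(\iota(\eta),\mathbf{e})_{S,i}^B\,(\iota(\eta),\mathbf{c})_{S,i}^B,
\]
where $X(\mathbf{e},\eta)$ is a product of Hilbert symbols of the $e_j$ against $\eta^{-1}$ dictated by the definition of $(\cdot,\cdot)_S^B$. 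After pulling out the common factor $(\mathbf{e},\mathbf{c})_S^B$, the desired identity reduces to the term-by-term claim
\[
(\iota(\eta),\mathbf{e})_{S,i}^B\,X(\mathbf{e},\eta)\,P\bigl(s_i;\eta m_i(\mathbf{ec})^{-h_i},b_i\bigr)=P\bigl(s_i;\eta m_i\mathbf{c}^{-h_i},b_i\bigr)
\]
for every $\eta$. Since $\Omega=\mathfrak{o}_S^\times F_S^{\times,n}$ is closed under integer powers and products, $\mathbf{e}^{-h_i}=\prod_j e_j^{-a_{ij}}\in\Omega$, so the two arguments of $P$ differ by an element of $\Omega$.

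\medskip

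\noindent\textbf{Rank-one input and main obstacle.} The Dirichlet polynomials $P(s;\cdot,b_i)$ of Theorem~\ref{thm-pp} are insensitive to multiplication of their second argument by $F_S^{\times,n}$ and transform under multiplication by an $S$-unit by an explicit Hilbert-symbol cocycle; this is exactly the rank-one statement established in \cite{BB,BBF}. The heart of the proof is to verify that this cocycle matches the combination $(\iota(\eta),\mathbf{e})_{S,i}^B\,X(\mathbf{e},\eta)$, and this is precisely where the asymmetric ordering $j>i$ in the definition of $(\mathbf{x},\mathbf{y})_{S,i}^B$ has been calibrated. The check is a direct but delicate bookkeeping using bimultiplicativity, the skew behaviour $(x,y)_S(y,x)_S=1$, and $(u,v^n)_S=1$; once the rank-one cocycle is taken as input, the identity is forced. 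I expect this Hilbert-symbol comparison to be the only genuinely nontrivial step: the polynomial dependence in $\mathbf{s}$ and the algebraic manipulation of the transformation law of $\Psi$ are formal, but matching the rank-one scattering cocycle with the higher-rank bilinear form $(\cdot,\cdot)_S^B$ requires the full strength of the reciprocity law and the explicit functional equation of $\mathcal{D}^*$.
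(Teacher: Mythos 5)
You should first note that the paper does not prove this proposition: it is quoted from \cite{BBF} with no internal argument, so there is nothing in the text to compare your proof against, and the real question is whether your attempt is self-contained. It is not, although the skeleton is right. The $\mathfrak{A}$-part is indeed a harmless affine substitution in $\mathbf s$, and your reduction of the $\mathcal{M}_B(\Omega)$-transformation law is sound: writing $\mathbf a=\mathbf e\mathbf c$, expanding $\Psi(\sigma_i\circ\mathbf s;\cdot)$ at $\mathbf e\,\mathbf c(\eta)$, and using $b_{ki}=b_{ik}$, $(x,y)_S(y,x)_S=1$ and $2b_{ij}=b_i a_{ij}$, the combination $(\iota(\eta),\mathbf e)_{S,i}^B\,X(\mathbf e,\eta)$ collapses to $\prod_j(\eta,e_j)_S^{2b_{ij}}=(\eta,\mathbf e^{-h_i})_S^{-b_i}$, so the whole proposition reduces to the single identity
\[
(\eta,u)_S^{-b_i}\,P\bigl(s_i;\,\eta m_i\mathbf c^{-h_i}u,\,b_i\bigr)=P\bigl(s_i;\,\eta m_i\mathbf c^{-h_i},\,b_i\bigr),\qquad u=\mathbf e^{-h_i}\in\Omega .
\]

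The gap is that this is exactly where your proof stops. You write that you ``expect'' the rank-one cocycle to match, but Theorem \ref{thm-pp} as stated in the paper only asserts the \emph{existence} of the polynomials $P(s;a\eta,j)$; it records neither their invariance under multiplying the second argument by $F_S^{\times,n}$ nor the precise Hilbert-symbol factor they acquire when that argument is multiplied by an $S$-unit. Those two properties are the entire content of the proposition: without them nothing forces the displayed identity, and with them everything else in your argument is formal bookkeeping. As written, the ``heart of the proof'' you identify is described but not executed. To close the gap you must either import the explicit transformation law of the $P$'s from \cite{BB,BBF} (where it is part of the construction of the scattering matrix for Kubota's Dirichlet series) and then actually check that the exponent $-b_i$ and the argument $\eta$ in the Hilbert symbol above agree with it, or concede that the proposition is being cited rather than proved, which is what the paper itself does.
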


\medskip

If $\mathbf c=(c_1, \cdots , c_r) \in (\frak o_S)^r$ and $\mathbf s \in \mathfrak h$, we set \begin{equation} \label{eqn-abs} |\mathbf c|_Q^{\mathbf s} = \prod_{i=1}^r |c_i|^{\, \alpha_i(\mathbf s)} =|c_1|^{s_1} \cdots |c_r|^{s_r} \quad \text{ and } \quad |\mathbf c|_P^{\mathbf s} = \prod_{i=1}^r |c_i|^{\, \omega_i(\mathbf s)} .\end{equation}
We let $\hat{\mathbf c}$ be the $(r-1)$-tuple $(c_1, \cdots, \hat{c}_i , \cdots , c_r)$ for $\mathbf c = (c_1, \cdots , c_r)$, where the hat on $c_i$ indicates that this entry is omitted. Let $\Psi \in \frak M_B(\Omega)$. 
We define \begin{eqnarray*} \mathcal{E}(s_i , \hat{\mathbf c}; \mathbf m, \Psi , i) &=& \sum_{0 \neq c_i \in \frak o_S /\frak o_S^\times} H(c_1, \cdots, c_i, \cdots , c_r ; \mathbf m) \Psi (\mathbf s ;c_1, \cdots , c_i , \cdots , c_r) |c_i|^{-s_i} \\ &=& \sum_{0 \neq c_i \in \frak o_S /\frak o_S^\times} H(\mathbf c ; \mathbf m) \Psi (\mathbf s ;\mathbf c)   |c_i|^{-s_i} .\end{eqnarray*} 
We also define \begin{eqnarray*} \tilde {\mathcal{E}}(\mathbf s , \hat{\mathbf c}; \mathbf m, \Psi , i) &=& \sum_{0 \neq c_i \in \frak o_S /\frak o_S^\times} H(c_1, \cdots, c_i, \cdots , c_r ; \mathbf m) \Psi (\mathbf s ;c_1, \cdots , c_i , \cdots , c_r) |c_i|^{-s_i}|\hat{\mathbf c} |_Q^{- \mathbf s } |\mathbf m|_P^{\mathbf s} \\ &=& \sum_{0 \neq c_i \in \frak o_S /\frak o_S^\times} H(\mathbf c ; \mathbf m) \Psi (\mathbf s ;\mathbf c) |\mathbf c|_Q^{- \mathbf s } |\mathbf m|_P^{\mathbf s} .\end{eqnarray*}Let $m= m(\alpha_i)$ and $s_i =\alpha_i(\mathbf s)$. We set
\[\mathcal{E}^*(s_i, \hat{\mathbf c}; \mathbf m , \Psi , i ) =  G_m(s_i) \zeta_F( m s_i - m+1) \mathcal{E}(s_i , \hat{\mathbf c}; \mathbf m , \Psi , i), \text{ and } \]
\[ \tilde{\mathcal{E}}^*(\mathbf s, \hat{\mathbf c}; \mathbf m , \Psi , i ) =  G_m(s_i) \zeta_F( m s_i - m+1) \tilde{\mathcal{E}}(\mathbf s , \hat{\mathbf c}; \mathbf m , \Psi , i). \]

The following proposition adapts the functional equation (Theorem \ref{thm-pp}) of the Kubota's Dirichlet series $\mathcal{D}^*(s,a;\Psi, j)$ for our purpose.

\begin{Prop}  \label{prop-ultimate}
Let $C=\prod_{j \neq i} c_j^{-a_{ij}}$. Then the function $\mathcal{E}^*(s_i,  \hat{\mathbf c}; \mathbf m , \Psi , i )$ (resp. $ \tilde{\mathcal{E}}^*(\mathbf s,  \hat{\mathbf c}; \mathbf m , \Psi , i )$) has a meromorphic continuation to $\mathbb C$ (resp. $\mathfrak h$) and is holomorphic except for possible simple poles at $s_i= 1 \pm 1/m$. Moreover, 
we have the functional equations
\begin{equation} \label{eqn-ff}  \mathcal{E}^*(s_i,  \hat{\mathbf c}; \mathbf m , \Psi , i ) =  |C m_i|^{1-s_i} \mathcal{E}^*(2-s_i, \hat{\mathbf c}; \mathbf m , \sigma_i \Psi , i ) \text{ and } \end{equation}
\begin{equation} \label{eqn-fs} \tilde{\mathcal{E}}^*(\mathbf s,  \hat{\mathbf c}; \mathbf m , \Psi , i ) =  \tilde{\mathcal{E}}^*(\sigma_i \circ \mathbf s, \hat{\mathbf c}; \mathbf m , \sigma_i \Psi , i ) . \end{equation}
\end{Prop}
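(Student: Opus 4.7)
The plan is to treat $\mathcal{E}^*(s_i,\hat{\mathbf c};\mathbf m,\Psi,i)$, for fixed $\hat{\mathbf c}$ and $\mathbf m$, as a finite $\mathbb{C}$-linear combination of Kubota Dirichlet series $\mathcal{D}^*(s_i,a;\Phi,b_i)$ and apply Theorem~\ref{thm-pp}; the weighted equation (\ref{eqn-fs}) will then be deduced from (\ref{eqn-ff}) by computing how $|\hat{\mathbf c}|_Q^{-\mathbf s}|\mathbf m|_P^{\mathbf s}$ transforms under $\mathbf s\mapsto\sigma_i\circ\mathbf s$. Concretely, I would split the summation variable $c_i$ as $c_i=c_i^0 d$, with $c_i^0$ composed only of primes dividing $Cm_i$ and $\gcd(d,Cm_i)=1$, and apply twisted multiplicativity (\ref{eqn-h2}) together with the $\mathbf m$-shift relation (\ref{eqn-h3}) to factor $H(\hat{\mathbf c},c_i^0 d;\mathbf m)$ as $\xi_B\cdot H((\mathbf 1,c_i^0,\mathbf 1);\mathbf m)\cdot H(\hat{\mathbf c},d;\mathbf m)$. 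For $d$ coprime to $Cm_i$, a further application of (\ref{eqn-h3}) reduces $H(\hat{\mathbf c},d;\mathbf m)$ to a rank-one coefficient twisted by a power residue symbol of $Cm_i$; by the rank-one definition (\ref{eqn-h1}) and the specialization of $\gamma(i)$ at the top of Section~3, this rank-one coefficient is a Gauss sum $g(1,d;b_i)$ (up to the normalization $1/q$). Reciprocity then repackages the resulting dependence on $d$ as $g(Cm_i\eta,d;b_i)$ for various $\eta\in F_S^\times/F_S^{\times,n}$, while the sum over $c_i^0$ --- finite in each coefficient because of the structure of $N^{(\varpi)}_{\beta,i}$ from Section~3 --- contributes a finite Dirichlet polynomial in $|Cm_i|^{s_i}$. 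The outcome is an identity
\[\mathcal{E}^*(s_i,\hat{\mathbf c};\mathbf m,\Psi,i)=\sum_{\nu\text{ finite}} P_\nu(s_i)\,\mathcal{D}^*\!\bigl(s_i,\,Cm_i\eta_\nu;\,\Phi_\nu,\,b_i\bigr),\]
where each $\Phi_\nu\in\mathcal{M}_{b_i}(\Omega)$ is built from $\Psi_i^{\mathbf a}$ in the manner of the lemma preceding the proposition.

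The identity $m=m(\alpha_i)=n/\gcd(n,b_i)$ matches the modulus $n/\gcd(n,j)$ in Theorem~\ref{thm-pp}, so the meromorphic continuation of each $\mathcal{D}^*(s_i,\cdot\,;\cdot,b_i)$ to $\mathbb{C}$ with at worst simple poles at $s_i=1\pm 1/m$ yields the corresponding continuation of $\mathcal{E}^*$. Theorem~\ref{thm-pp} converts each $\mathcal{D}^*(s_i,Cm_i\eta_\nu;\Phi_\nu,b_i)$ into a sum over $\eta\in F_S^\times/F_S^{\times,n}$ involving the scattering polynomials $P(s_i;\cdot,b_i)$, and these are precisely what appear in the definition of $\sigma_i\Psi\in\mathfrak{M}_B(\Omega)$; reassembling the pieces produces $|Cm_i|^{1-s_i}\mathcal{E}^*(2-s_i,\hat{\mathbf c};\mathbf m,\sigma_i\Psi,i)$, the prefactor originating from the $|a|^{s/2}$ normalization built into the definition of $\mathcal{D}$.

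For (\ref{eqn-fs}), the formula $\sigma_i\circ\mathbf s=\mathbf s-(s_i-1)h_i$ together with $\omega_j(h_i)=\delta_{ij}$ gives $\alpha_i(\sigma_i\circ\mathbf s)=2-s_i$, $\alpha_j(\sigma_i\circ\mathbf s)=s_j-a_{ij}(s_i-1)$ for $j\neq i$, $\omega_i(\sigma_i\circ\mathbf s)=\omega_i(\mathbf s)+1-s_i$, and $\omega_j(\sigma_i\circ\mathbf s)=\omega_j(\mathbf s)$ for $j\neq i$, whence
\[|\hat{\mathbf c}|_Q^{-\sigma_i\circ\mathbf s}\,|\mathbf m|_P^{\sigma_i\circ\mathbf s}=|Cm_i|^{1-s_i}\,|\hat{\mathbf c}|_Q^{-\mathbf s}\,|\mathbf m|_P^{\mathbf s}.\]
Multiplying (\ref{eqn-ff}) through by $|\hat{\mathbf c}|_Q^{-\mathbf s}|\mathbf m|_P^{\mathbf s}$ absorbs the spurious $|Cm_i|^{1-s_i}$ and yields (\ref{eqn-fs}). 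Meromorphic continuation of $\tilde{\mathcal{E}}^*$ to $\mathfrak h$ is then immediate, as the multiplier $|\hat{\mathbf c}|_Q^{-\mathbf s}|\mathbf m|_P^{\mathbf s}$ is entire in $\mathbf s$ and the dependence on the variables $s_j$ ($j\neq i$) is polynomial through $\Psi\in\mathfrak{M}_B(\Omega)$.

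The main obstacle is the bookkeeping in the first paragraph: matching the Gauss sum $g(Cm_i\eta,d;b_i)$, the power residue symbols arising from (\ref{eqn-h3}), and the Hilbert-symbol twist packaged in $\xi_B$ so that the $\eta$-sum produced by Theorem~\ref{thm-pp} is exactly the one used to define $\sigma_i$ on $\mathfrak{M}_B(\Omega)$. Proposition~\ref{prop-frac} and Theorem~\ref{thm-3-last} already carry out the corresponding matching prime-by-prime; once the global bookkeeping is set up consistently, the remainder is essentially the verification recorded in \cite{BBF,CGco} in the finite-rank case.
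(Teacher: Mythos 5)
Your proposal is correct and follows essentially the same route as the paper: the paper disposes of the continuation and the functional equation (\ref{eqn-ff}) of $\mathcal{E}^*$ by citing Theorem 5.8 of \cite{CGco} (whose internal mechanism --- decomposition into Kubota series $\mathcal{D}^*$ handled by Theorem \ref{thm-pp}, combined with the local functional equations of Theorem \ref{thm-3-last} --- is exactly what you sketch), and then obtains (\ref{eqn-fs}) from (\ref{eqn-ff}) by the same direct computation of how $|\hat{\mathbf c}|_Q^{-\mathbf s}\,|\mathbf m|_P^{\mathbf s}$ transforms under $\mathbf s \mapsto \sigma_i\circ\mathbf s$. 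The one imprecision is that the local factors at primes dividing $Cm_i$ are infinite power series whose transformation is governed by Theorem \ref{thm-3-last}, not finite Dirichlet polynomials, but you correctly defer that matching to Proposition \ref{prop-frac} and Theorem \ref{thm-3-last}.
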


\begin{proof}
With Theorem \ref{thm-3-last} established, the meromorphic continuation and functional equation of $\mathcal{E}^*$ follows from Theorem 5.8 of \cite{CGco}. Now the meromorphic continuation of $\tilde{\mathcal E}^*$ is clear, and its functional equation is obtained from that of $\mathcal E^*$ by direct computation.
\end{proof}

\section{The Multiple Dirichlet Series}

In this section, we define Weyl group multiple Dirichlet series from the local coefficients via twisted multiplicativity, and prove functional equations and meromorphic continuation of the multiple Dirichlet series.  As mentioned in the introduction, it is expected that our multiple Dirichlet series would be related to a Whittaker function up to a normalizing factor in the affine case. However, it is also expected that, in the indefinite case, the contribution coming from the imaginary roots is much more complicated. 

\medskip

Let $\mathbf m =(m_1, \cdots , m_r) \in (\mathfrak o_S)^r$ be fixed. Then we have $\mathbf m = \mathbf u \prod_\varpi \varpi_P^{\lambda_\varpi}$, $\mathbf u \in (\mathfrak o_S^\times)^r$,  with $\lambda_\varpi \in P_+$.
Let $\Psi \in \mathfrak{M}_B(\Omega)$, and we define a function $Z(\mathbf s; \mathbf m, \Psi)$ on $\mathfrak h$ by \[ Z(\mathbf s; \mathbf m, \Psi)= \sum_{\mathbf c} H(\mathbf c; \mathbf m) \Psi(\mathbf s; \mathbf c) |\mathbf c|_Q^{- \mathbf s} | \mathbf m|_P^{\mathbf s} ,\] where the sum is over $\mathbf c=(c_1, \cdots , c_r)$ such that $0 \neq c_i \in \frak o_S / \frak o_S^\times$ for $i=1, ... , r$. Although this is a function on $\mathfrak h$, except the factor $|\mathbf m|_P^\mathbf{s}$, it only depends on $(s_1,\cdots, s_r)$.

In order to investigate the convergence of $Z(\mathbf s; \mathbf m, \Psi)$, we first prove the following lemma.

\begin{Lem} \label{lem-conv}
Let $q$ be the norm of a prime $\varpi \in \frak o_S$, and consider
\[ EV_q \left ( \frac { D(\mathbf x)}{\Delta(\mathbf x)} , \mathbf s \right ) = \prod_{\alpha \in \Phi_+} \left ( \frac {1 - q^{m(\alpha) \alpha ( \rho^\vee - \mathbf s) -1}  }{1 - q^{m(\alpha) \alpha ( \rho^\vee - \mathbf s)}  } \right )^{\mathrm{mult}(\alpha)}.  \] Assume that $\frak{Re}(\alpha_i(\mathbf s)) > 1$ for each $i=1, \dots , r$.  Then the product $EV_q \left ( \frac { D(\mathbf x)}{\Delta(\mathbf x)} , \mathbf s \right )$ is absolutely convergent for sufficiently large $q$. In this case, if $\frak{Re}(\alpha_i(\mathbf{s}))>1+\epsilon$ for each $1\leq i\leq r$, we have \[ |EV|_q \left ( \frac { D(\mathbf x)}{\Delta(\mathbf x)} , \mathbf s \right ) \sim 1 + O(q^{-\varepsilon}), \] where the implicit constant only depends on $\epsilon$.
\end{Lem}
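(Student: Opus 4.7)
The plan is to reduce to an analysis of nonnegative series in a single real variable by observing positivity, then apply a logarithmic bound together with a root-multiplicity estimate. First I would expand each factor $(1 - q^{m(\alpha)d(\alpha)-1}\mathbf{x}^{m(\alpha)\alpha})/(1 - q^{m(\alpha)d(\alpha)}\mathbf{x}^{m(\alpha)\alpha})$ as a geometric series, obtaining $1 + (1-1/q)\sum_{k \geq 1}(q^{m(\alpha)d(\alpha)}\mathbf{x}^{m(\alpha)\alpha})^k$, which has nonnegative coefficients. Since raising to the integer power $\mathrm{mult}(\alpha)$ and taking products preserves positivity, the formal expansion of $D(\mathbf{x})/\Delta(\mathbf{x})$ in $\mathcal{B}$ has nonnegative coefficients. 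For real $\mathbf{s}$ this means $|EV|_q(D/\Delta, \mathbf{s})$ equals the product on the right-hand side of the displayed identity, so it suffices to prove that product is absolutely convergent and equals $1 + O(q^{-\varepsilon})$.

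Next I would estimate the logarithm of a typical factor. Using $\alpha(\rho^\vee) = d(\alpha)$ and the hypothesis $\mathfrak{Re}(s_i) > 1 + \varepsilon$, the exponent $A := m(\alpha)\alpha(\rho^\vee - \mathbf{s})$ has real part at most $-\varepsilon\,m(\alpha)\,d(\alpha) \leq -\varepsilon d(\alpha)$, so $|q^A| \leq q^{-\varepsilon d(\alpha)}$. Expanding
\[ \log(1-q^{A-1}) - \log(1-q^A) = \sum_{k \geq 1}\frac{q^{Ak}(1 - q^{-k})}{k} \]
and applying the triangle inequality yields a bound of the form $\bigl|\mathrm{mult}(\alpha)\bigl[\log(1-q^{A-1}) - \log(1-q^A)\bigr]\bigr| \leq 2\,\mathrm{mult}(\alpha)\,q^{-\varepsilon d(\alpha)}$ once $q^\varepsilon \geq 2$.

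The main obstacle is controlling the sum of these bounds over the infinite set $\Phi_+$. For this I would invoke the standard fact that $\mathfrak{n}_+ \subset \mathfrak{g}(A)$ is a graded quotient of the free Lie algebra on $r$ generators, so that Witt's formula yields
\[ \sum_{\alpha \in \Phi_+,\, d(\alpha) = d}\mathrm{mult}(\alpha) \;=\; \dim \mathfrak{n}_+^{(d)} \;\leq\; \frac{r^d}{d}. \]
Consequently
\[ \sum_{\alpha \in \Phi_+} \mathrm{mult}(\alpha)\, q^{-\varepsilon d(\alpha)} \;\leq\; \sum_{d \geq 1}\frac{(rq^{-\varepsilon})^d}{d} \;=\; -\log\!\bigl(1 - rq^{-\varepsilon}\bigr), \]
which converges precisely when $q^\varepsilon > r$. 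This is exactly the ``sufficiently large $q$'' condition in the statement, and in that regime the sum is $O(q^{-\varepsilon})$ with implicit constant depending only on $\varepsilon$. Exponentiating the total logarithm gives $|EV|_q(D/\Delta, \mathbf{s}) = \exp\bigl(O(q^{-\varepsilon})\bigr) = 1 + O(q^{-\varepsilon})$, completing the plan. Modulo the multiplicity bound, every step is routine power-series estimation; it is the Kac-Moody input on root multiplicities that makes the argument genuinely different from the finite-rank case.
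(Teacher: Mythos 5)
Your argument is correct and follows essentially the same route as the paper's proof: take logarithms, bound each factor's contribution by a constant times $\mathrm{mult}(\alpha)\,q^{-\varepsilon d(\alpha)}$, control the sum over $\Phi_+$ by an exponential-in-depth multiplicity bound, and sum the resulting geometric series to get $O(q^{-\varepsilon})$ under the condition $q^{\varepsilon}>r$. The only divergence is cosmetic: you invoke Witt's formula to get $\sum_{d(\alpha)=d}\mathrm{mult}(\alpha)\le r^d/d$, whereas the paper uses the cruder bound $r^d$ (linear independence of root vectors inside the degree-$d$ piece of the tensor algebra), and both yield the same conclusion.
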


\begin{proof}
We may assume that $\mathbf s$ is real and that $\frak{Re}(\alpha_i(\mathbf s)) \ge 1 + \log_q r + \varepsilon$ for sufficiently large $q$ and for each $i=1, \dots , r$. Since we have
\[  \frac {1 - q^{m(\alpha) \alpha ( \rho^\vee - \mathbf s) -1}  }{1 - q^{m(\alpha) \alpha ( \rho^\vee - \mathbf s)}  } = 1 +  \frac {q^{m(\alpha) \alpha ( \rho^\vee - \mathbf s) } (1-q^{-1})  }{1 - q^{m(\alpha) \alpha ( \rho^\vee - \mathbf s)}  } .\]
Therefore
\[ \log\left(EV_q \left ( \frac { D(\mathbf x)}{\Delta(\mathbf x)} , \mathbf s \right ) \right)\sim \sum_{\alpha \in \Phi_+} \mathrm{mult}(\alpha) \ \frac {q^{m(\alpha) \alpha ( \rho^\vee - \mathbf s) } }{1 - q^{m(\alpha) \alpha ( \rho^\vee - \mathbf s)}  }. \] 
For each $k \in \mathbb Z_{>0}$, there are at most $r^k$ linearly independent root vectors all together for positive roots with depth $k$ in the Kac-Moody algebra $\mathfrak g(A)$, that is to say,
\[ \sum_{\alpha \in \Phi_+ \atop d(\alpha) =k} \mathrm{mult}(\alpha) \le r^k .\]
Then we have \begin{eqnarray*}
\sum_{\alpha \in \Phi_+} \mathrm{mult}(\alpha) \ \frac {q^{m(\alpha) \alpha ( \rho^\vee - \mathbf s) } }{1 - q^{m(\alpha) \alpha ( \rho^\vee - \mathbf s)}  }  &\le &\sum_{\alpha \in \Phi_+} \mathrm{mult}(\alpha) \ q^{\alpha(\rho^\vee -\mathbf s)} \\ & \le & \sum_{k=1}^\infty r^k q^{-k( \log_q r+\varepsilon)} = \sum_{k=1}^\infty q^{-k\varepsilon} = \frac 1 {q^{\varepsilon}-1}\leq Aq^{-\epsilon},
\end{eqnarray*}
where $A$ could be chosen to be $(1-2^{-\epsilon})^{-1}$. The last assertion of the proposition follows from this by taking the natural exponential function and we see that the implied constant only depends on $\epsilon$, that is, independent of $q$.
\end{proof}

\begin{Thm} \label{thm-conv-final}
Assume that $\Psi \in \mathfrak{M}_B(\Omega)$. The series $Z(\mathbf s; \mathbf m, \Psi)$ absolutely converges for $\mathbf s \in \mathfrak h$ satisfying the condition:
\[ \frak{Re}(\alpha_i(\mathbf s))=\frak{Re}(s_i) > \max\{1+\log_2r, 2\} \qquad \text{ for each }i=1, ... , r.\]
\end{Thm}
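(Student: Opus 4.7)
The plan is to exploit the twisted multiplicativity relations (\ref{eqn-h2})--(\ref{eqn-h3}) of $H$ together with the compatible cocycle behavior of $\Psi\in\mathfrak{M}_B(\Omega)$ to express $|Z(\mathbf s;\mathbf m,\Psi)|$, up to a bounded prefactor depending on $\mathbf m$ and $\mathbf s$, as an Euler product $\prod_\varpi |Z_\varpi(\mathbf s)|$, where
$$Z_\varpi(\mathbf s)\;=\;\sum_{\beta\in Q_+}H(\varpi_Q^\beta;\varpi_P^{\lambda_\varpi})\,q^{-\beta(\mathbf s)}\;=\;EV_q\bigl(N(\mathbf x;\lambda_\varpi),\mathbf s\bigr),$$
with $q=|\varpi|$ and $\lambda_\varpi$ the $\varpi$-part of $\mathbf m$ from (\ref{eqn-decom}). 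The factorization is legitimate because the power-residue and Hilbert symbols appearing in (\ref{eqn-h2})--(\ref{eqn-h3}) have absolute value one and $|\Psi|$ contributes only a bounded factor. It therefore suffices to establish finiteness of each $|Z_\varpi(\mathbf s)|$ and convergence of the resulting infinite product.

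Using the factorization $N(\mathbf x;\lambda)=(D(\mathbf x)/\Delta(\mathbf x))\cdot s(\mathbf x,\lambda)$ and the elementary submultiplicativity of $|EV|_q$ on formal distributions (an immediate consequence of the triangle inequality), one obtains
$$|Z_\varpi(\mathbf s)|\;\leq\;|EV|_q\!\bigl(D/\Delta,\mathbf s\bigr)\cdot|EV|_q\!\bigl(s(\mathbf x,\lambda_\varpi),\mathbf s\bigr).$$
The first factor is exactly what Lemma \ref{lem-conv} estimates: for $\frak{Re}(s_i)>1+\log_q r+\varepsilon$ and sufficiently large $q$, it equals $1+O(q^{-\varepsilon})$ with implied constant depending only on $\varepsilon$. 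For the second factor, apply Proposition \ref{prop-abc} (with $\beta=0$) term by term to $s(\mathbf x,\lambda_\varpi)=\sum_{w\in W}j(w,\mathbf x)(1|_{\lambda_\varpi}w)(\mathbf x)$ to get
$$|EV|_q\!\bigl(s(\mathbf x,\lambda_\varpi),\mathbf s\bigr)\;\leq\;\sum_{w\in W}3^{\ell(w)}\,q^{-\beta_w(\frak{Re}(\mathbf s)-\rho^\vee)},$$
with $\beta_w=(\lambda_\varpi+\rho)-w^{-1}(\lambda_\varpi+\rho)\in Q_+$. Since $\rho-w^{-1}\rho=\sum_{\alpha\in\Phi(w)}\alpha$ has depth at least $|\Phi(w)|=\ell(w)$ and $\lambda_\varpi-w^{-1}\lambda_\varpi\in Q_+$, one has $d(\beta_w)\geq\ell(w)$, whence $\beta_w(\frak{Re}(\mathbf s)-\rho^\vee)\geq\ell(w)\cdot\min_i(\frak{Re}(s_i)-1)$. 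Combined with the crude bound $\#\{w\in W:\ell(w)=k\}\leq r^k$, the Weyl group sum is dominated by a geometric series in a constant multiple of $r\cdot q^{-\min_i(\frak{Re}(s_i)-1)}$, which under the hypothesis converges and is also of the form $1+O(q^{-\varepsilon})$ for large $q$.

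Putting the pieces together, $|Z_\varpi(\mathbf s)|=1+O(q^{-\varepsilon})$ for all but finitely many primes $\varpi$, and the excluded small primes each contribute a bounded factor. The infinite product $\prod_\varpi(1+O(q^{-\varepsilon}))$ converges precisely when $\sum_\varpi q^{-\varepsilon}<\infty$, i.e., when $\varepsilon>1$, which forces $\frak{Re}(s_i)>2$. This accounts for the two thresholds in the hypothesis: $\frak{Re}(s_i)>1+\log_2 r$ (at the smallest prime norm $q=2$) secures finiteness of every local factor, and $\frak{Re}(s_i)>2$ secures convergence of the Euler product. The main obstacle is the Weyl group sum, which is genuinely infinite in the Kac-Moody setting; the delicate balance between the exponential factor $3^{\ell(w)}$ coming from Proposition \ref{prop-abc} and the combinatorial growth of $\#\{w:\ell(w)=k\}$ in the Kac-Moody Weyl group is what forces the logarithmic threshold in $r$, and a careful analysis is needed to obtain the sharp constant $\log_2 r$.
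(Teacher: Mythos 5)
Your overall strategy is the paper's: reduce to the Euler product $\prod_\varpi EV_q(N(\mathbf x;\lambda_\varpi),\mathbf s)$, split $N$ as $(D/\Delta)\cdot s(\mathbf x,\lambda_\varpi)$, control the first factor by Lemma \ref{lem-conv} and the second by Proposition \ref{prop-abc}, and then sum over primes; your accounting for the threshold $2$ via summability of the local deviations is also correct. But there is a genuine gap at the small primes, and it is exactly the point you gesture at in your last sentence without resolving. Your bound on the Weyl group sum is the geometric series $\sum_{k\ge 0}(3r)^k q^{-k\min_i(\frak{Re}(s_i)-1)}$, obtained from the linear estimate $d(w^{-1}\cdot 0)\ge \ell(w)$ together with $\#\{w:\ell(w)=k\}\le r^k$. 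Under the stated hypothesis one only gets $r\,q^{-\min_i(\frak{Re}(s_i)-1)}<1$ (from $\frak{Re}(s_i)>1+\log_2 r\ge 1+\log_q r$), so the ratio of your series is bounded only by $3$; for every prime with $q$ below roughly $3r$ --- in particular $q=2$ for any $r\ge 2$ --- your bounding series diverges, and finiteness of that local factor is not established. Saying that the finitely many small primes ``each contribute a bounded factor'' begs the question: boundedness of precisely those factors is what your estimate fails to deliver.

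The missing ingredient is the paper's inequality (\ref{eqn-qp}): for every $M>0$ one has $(w^{-1}\cdot 0)(\rho^\vee-\mathbf s)\le -M\,\ell(w)\min_i(s_i-1)$ for all but finitely many $w\in W$. This superlinear gain in $\ell(w)$ comes from the observation that $\frac{1}{\ell(w)}\sum_{\alpha\in\Phi(w)}d(\alpha)\to\infty$ as $\ell(w)\to\infty$, because only finitely many positive roots have depth $\le 2M$ while $\Phi(w)$ consists of $\ell(w)$ \emph{distinct} positive roots. Choosing $M$ so that $6r<q^{M(1+\epsilon)}$ turns the tail of the Weyl group sum into a geometric series with ratio less than $1/2$ for every $q\ge 2$, which is what absorbs the factor $3^{\ell(w)}$ from Proposition \ref{prop-abc}. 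With that lemma inserted your argument closes; without it, the threshold $\max\{1+\log_2 r,\,2\}$ is not strong enough to defeat the constant $3$, and the convergence of the local factors at small primes remains unproved.
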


\begin{proof}
We may assume that $\mathbf s$ is real.
Since the function $\Psi$ is bounded, it is sufficient to consider
\[ \sum_{\mathbb c} |H(\mathbf c; \mathbf m)| |\mathbf c|_Q^{-\mathbf s} = \prod_\varpi \sum_{\beta \in Q_+} | H(\varpi_Q^\beta; \varpi_P^{\lambda_\varpi})| |\varpi|^{-\beta (\mathbf s)} ,\]
which only depends on $(s_1,\cdots, s_r)$. We fix $\varpi$ for the time being, and write $q=|\varpi|$.

Suppose that $w=\sigma_{i_1} \cdots \sigma_{i_k} \in W$ is a reduced expression. Then one can show that
\[ w^{-1} \cdot \beta = \mu_{i_1}(\beta) \sigma_{i_k} \cdots \sigma_{i_2}\alpha_{i_1}  +  \cdots + \mu_{i_{k-1}}(\beta) \sigma_{i_k} \alpha_{i_{k-1}} + \mu_{i_k}(\beta) \alpha_{i_k} +\beta. \]
Using this identity with $\beta=0$, let us prove that, for any $M>0$, if $\ell(w)$ is large, then
\begin{equation} \label{eqn-qp}
(w^{-1} \cdot 0 ) ( \rho^\vee - \mathbf s) \le -M\ell(w)\cdot \min\{s_i-1 \colon 1\leq i\leq r\}.
\end{equation} Note first that it holds trivially for any $w$ if $M=1$. In general, since $w^{-1}\cdot 0$ is the weighted sum of all roots in $\Phi(w)$ by $\mu_i(0) \geq 1$, $i=1, \dots , r$, it is enough to show that as $ \ell(w)\rightarrow \infty$,
\[\frac{1}{\ell(w)}\sum_{\alpha\in \Phi(w)}d(\alpha)\rightarrow \infty.\]
Indeed, for any $M>0$, let $M'=\{\alpha\in \Phi_+\colon d(\alpha)\leq 2M\}$. If $\ell(w)>2M'$, then
\[\sum_{\alpha\in \Phi(w)}d(\alpha)>2M(\ell(w)-M')>M\ell(w),\]
and (\ref{eqn-qp}) is proved.

We first assume that $q$ is sufficiently large so that we have
\[  s_i \ge 2+\log_q r + \varepsilon \qquad \text{ for each }i=1, ... , r.\]
From the definition of $H(\varpi_Q^\beta; \varpi_P^{\lambda_\varpi})$, we have
\begin{eqnarray*} \sum_{\beta \in Q_+} H(\varpi_Q^\beta; \varpi_P^{\lambda_\varpi}) q^{-\beta(\mathbf s)} &=& EV_q \left ( N(\mathbf x; \lambda), \mathbf s \right )  \\ & = & EV_q \left ( \frac {D(\mathbf x)}{\Delta(\mathbf x)} , \mathbf s \right ) EV_q \left ( \sum_{w \in W} j(w, \mathbf x) (1|_\lambda w)(\mathbf x), \mathbf s \right ) .\end{eqnarray*}

We obtain from  Lemma \ref{lem-conv},  Proposition \ref{prop-abc} and \eqref{eqn-qp} that
\begin{eqnarray}
\sum_{\beta \in Q^+} | H(\varpi_Q^\beta; \varpi_P^{\lambda_\varpi})| q^{-\beta (\mathbf s)} &\le & |EV|_q \left ( \frac {D(\mathbf x)}{\Delta(\mathbf x)} , \mathbf s \right ) |EV|_q \left ( \sum_{w \in W} j(w, \mathbf x) (1|_\lambda w)(\mathbf x), \mathbf s \right ) \nonumber \\
 &\le &  |EV|_q \left ( \frac {D(\mathbf x)}{\Delta(\mathbf x)} , \mathbf s \right ) \sum_{w \in W} 3^{\ell(w)} q^{(w^{-1} \cdot 0)(\rho^\vee - \mathbf s)} \nonumber \\ & \le &  |EV|_q \left ( \frac {D(\mathbf x)}{\Delta(\mathbf x)} , \mathbf s \right ) \sum_{w \in W} 3^{\ell(w)} q^{-\ell(w)(1+\varepsilon)} \nonumber \\ & \le & |EV|_q \left ( \frac {D(\mathbf x)}{\Delta(\mathbf x)} , \mathbf s \right ) \left ( 1+ \sum_{k=1}^\infty \left ( \frac { 3 r}{q^{1+\varepsilon}} \right )^k  \right ) \nonumber \\ &\sim & 1+O (q^{-1-\varepsilon}) \label{eqn-geul}
\end{eqnarray}
for sufficiently large $q$. Here we applied the trivial case $M=1$ of (\ref{eqn-qp}).

For a general $q$, since $\frak{Re}(\alpha_i(\mathbf s))>1+\log_2r\geq 1+\log_qr$ for each $i$, the same estimation as in Lemma \ref{lem-conv} shows that
\[ |EV|_q \left ( \frac {D(\mathbf x)}{\Delta(\mathbf x)} , \mathbf s \right ) \]
is absolutely convergent. Using (\ref{eqn-qp}) and choosing $M$ large enough so that $6r<q^{M(1+\epsilon)}$,
\[|EV|_q \left ( \sum_{w \in W} j(w, \mathbf x) (1|_\lambda w)(\mathbf x), \mathbf s \right )\ll \sum_{w \in W} 3^{\ell(w)} q^{-M\ell(w)(1+\varepsilon)}\ll  \left ( 1+ \sum_{k=1}^\infty \left ( \frac { 3 r}{q^{M(1+\varepsilon)}} \right )^k  \right )\ll 1. \]
Therefore, all factors in \[ \prod_\varpi \sum_{\beta \in Q^+} | H(\varpi_Q^\beta; \varpi_P^{\lambda_\varpi})| |\varpi|^{-\beta (\mathbf s)} \]  are absolutely convergent, and the absolute convergence of the whole product is obtained using (\ref{eqn-geul}) .
\end{proof}

For any $\alpha \in \Phi$, we define \[ \zeta_\alpha (\mathbf s) = \zeta_F \left ( 1 + m(\alpha) \alpha(\mathbf s - \rho^\vee ) \right ) \quad \text{and} \quad G_\alpha (\mathbf s) =G_{m(\alpha)}  \left (1  +   \alpha(\mathbf s - \rho^\vee ) \right ).\] 
It is easy to see that $G_\alpha ( \sigma_i \circ \mathbf s) = G_{\sigma_i \alpha}(\mathbf s)$ and $\zeta_\alpha( \sigma_i \circ \mathbf s) = \zeta_{\sigma_i \alpha} ( \mathbf s)$.
Then we have, for $w \in W$,
 \begin{equation} \label{eqn-hoho} G_\alpha ( w \circ \mathbf s) = G_{w^{-1} \alpha}(\mathbf s) \quad \text{ and } \quad \zeta_\alpha( w \circ \mathbf s) = \zeta_{w^{-1} \alpha} ( \mathbf s).\end{equation}
In particular, $G_{\alpha_i}(\sigma_i \circ \mathbf s) = G_{-\alpha_i}( \mathbf s)$ and $\zeta_{\alpha_i}( \sigma_i \circ \mathbf s) = \zeta_{- \alpha_i} ( \mathbf s)$.
Now we define
\[ G(w,\mathbf s) = \prod_{ \alpha \in \Phi(w)} \frac{ G_\alpha (\mathbf s) \zeta_{\alpha}(\mathbf s) } {G_{-\alpha} (\mathbf s) \zeta_{-\alpha}(\mathbf s) }. \]

Let $\frak{F} = \{ \mathbf s \in \frak h_{\mathbb R} \ | \   \alpha_i ( \mathbf s ) =s_i \ge 1 \text{ for all } i=1, ... , r \}$. 
We define the {\em shifted Tits cone} $\frak{X} \subseteq \frak h_{\mathbb R}$ to be  \[ \frak{X} =  \bigcup_{w \in W} w \circ \frak{F}. \]

\begin{Prop} \cite{Bourbaki, Kac}
The shifted Tits cone $\frak{X}$ is a convex cone, and we have $\frak{X} = \frak h_{\mathbb R}$ if and only if $|W| < \infty$.
\end{Prop}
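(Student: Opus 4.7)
The plan is to reduce both assertions to the classical statements about the (unshifted) Tits cone, as established in Kac, Proposition~3.12, and in Bourbaki, LIE~V.

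First I would strip off the shift. Since $\alpha_i(\rho^\vee) = 1$ for every $i$, we have $\frak F = \rho^\vee + \mathfrak{C}$, where $\mathfrak{C} = \{ t \in \frak h_{\mathbb R} : \alpha_i(t) \ge 0 \text{ for all } i = 1, \dots, r \}$ is the ordinary fundamental chamber. A direct computation from $w \circ s = w(s - \rho^\vee) + \rho^\vee$ gives $w \circ (\rho^\vee + t) = \rho^\vee + wt$, so
\[
\frak X \;=\; \rho^\vee + X, \qquad X := \bigcup_{w \in W} w\mathfrak{C}.
\]
Consequently, $\frak X$ is a convex cone iff the standard Tits cone $X$ is one, and $\frak X = \frak h_{\mathbb R}$ iff $X = \frak h_{\mathbb R}$.

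For convexity of $X$, I would appeal to Kac, Proposition~3.12(b). The standard proof rests on the characterization that $s \in X$ iff $\alpha(s) < 0$ for only finitely many $\alpha \in \Phi_+^{\mathrm{re}}$; given $s, s' \in X$, every point on the open segment between them enjoys the same property (each $\alpha$ is a linear functional, so it changes sign at most once along the segment), and an induction on the number of walls crossed, at each crossing conjugating by the corresponding simple reflection, places the whole segment in $X$.

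For the equivalence $X = \frak h_{\mathbb R} \iff |W| < \infty$, the forward direction is the classical fact that a finite reflection group's closed Weyl chambers tile the ambient space. For the converse, I would argue contrapositively: if $|W| = \infty$ then $A$ is not of finite type, and by Kac, Proposition~3.12(e), one exhibits an explicit point outside $X$. Concretely, in the affine case $\alpha(s) \geq 0$ for every $\alpha \in \Phi_+^{\mathrm{re}}$ forces $\delta(s) \geq 0$ for the null root $\delta$, so the open half-space $\{\delta < 0\}$ is disjoint from $X$; a similar separating argument using an isotropic or negative-norm direction handles the indefinite case.

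The hardest step is the converse direction of the equivalence, since it genuinely requires the Kac-Moody structure theory (in particular the description of imaginary roots and the behavior of chambers at infinity). Once this is imported from the cited references, the remainder of the argument consists only of the short translation above together with the known convexity of the unshifted Tits cone.
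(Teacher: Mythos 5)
The paper does not actually prove this proposition: it is stated with a bare citation to Bourbaki and Kac, so the only content a proof needs to supply beyond the references is the passage from the shifted picture to the standard one, and that is exactly what your proposal does, correctly. The identity $\mathfrak{X}=\rho^\vee+X$ (from $\alpha_i(\rho^\vee)=1$ and $w\circ(\rho^\vee+t)=\rho^\vee+wt$) is right, and the two imported facts are indeed the classical ones. Two remarks. First, $\rho^\vee+X$ is generally \emph{not} a cone with apex at the origin (in the affine case $\tfrac12\rho^\vee=\rho^\vee+(-\tfrac12\rho^\vee)\notin\rho^\vee+X$, since $\delta(-\tfrac12\rho^\vee)<0$ while $X\subseteq\{\delta\ge 0\}$), so ``convex cone'' in the statement must be read as ``convex cone with apex $\rho^\vee$''; your translation delivers exactly that, and it is clearly what the authors intend, but the equivalence ``$\mathfrak X$ is a cone iff $X$ is'' is only literally true with this reading. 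Second, your handling of the indefinite case (``a similar separating argument using an isotropic or negative-norm direction'') is the one sketchy spot; the uniform argument covering the affine, indefinite and decomposable cases at once is to take any positive imaginary root $\alpha$ (which exists whenever $|W|=\infty$, by Kac, Theorem 5.6 applied to a component of non-finite type) and use the $W$-invariance of $\Delta_+^{\mathrm{im}}$: if $s=w^{-1}t$ with $t$ in the fundamental chamber, then $\alpha(s)=(w\alpha)(t)\ge 0$ because $w\alpha\in\Delta_+$, so $X\subseteq\{\alpha\ge 0\}\subsetneq\mathfrak{h}_{\mathbb R}$. With that substitution the argument is complete and self-contained modulo Kac, Proposition 3.12.
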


Let $ \frak{L} = \left\{ \mathbf s \in \frak h_{\mathbb R} \ | \   \alpha_i ( \mathbf s )=s_i  > \max\{2, 1+\log_2r\} \text{ for all } i=1, ... , r \right\}  $, and we define \[ \frak{X}_0 = \mathrm{convex\  hull} \left ( \bigcup_{w \in W} w \circ \frak{L} \right ) \subseteq \frak{X} . \]

\begin{Thm} \label{thm-main-final}
The Dirichlet series $Z(\mathbf s; \mathbf m, \Psi)$ has meromorphic continuation to all $\mathbf s \in \frak h$ such that $\frak{Re}(\mathbf s) \in \frak{X}_0$ and satisfies the functional equation \[ Z(w \circ \mathbf s; \mathbf m, w \Psi) = G(w, \mathbf s)Z(\mathbf s; \mathbf m, \Psi) \] for each $w \in W$, where the action of $w$ on $\Psi$ is given by the composition of simple reflections $\sigma_i$. The set of polar hyperplanes is contained in the $W$-translates of the hyperplanes $s_i= 1 \pm m(\alpha_i)/n$.
\end{Thm}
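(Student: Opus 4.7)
The plan is to establish the theorem in three stages: (i) prove the functional equation for a single simple reflection $\sigma_i$ by reducing it to the rank-one functional equation of Proposition \ref{prop-ultimate}; (ii) iterate, using a cocycle property of the factor $G(w,\mathbf s)$, to obtain the functional equation for arbitrary $w \in W$; and (iii) patch the resulting local pieces of meromorphy into a single function on the tube over $\mathfrak X_0$ via Bochner's tube theorem. Throughout, Theorem \ref{thm-conv-final} supplies the initial tube region $\mathfrak L + i\mathfrak h_{\mathbb R}$ on which $Z(\mathbf s;\mathbf m,\Psi)$ is holomorphic.

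For stage (i), I fix $i$ and reorganize the defining series by the $(r-1)$-tuple $\hat{\mathbf c}$, so that $Z(\mathbf s;\mathbf m,\Psi) = \sum_{\hat{\mathbf c}} \tilde{\mathcal E}(\mathbf s,\hat{\mathbf c};\mathbf m,\Psi,i)$. Multiplying through by the completion factor $G_{\alpha_i}(\mathbf s)\zeta_{\alpha_i}(\mathbf s)$ turns each inner Dirichlet series into $\tilde{\mathcal E}^*$, and the rank-one functional equation \eqref{eqn-fs} swaps $\mathbf s \leftrightarrow \sigma_i\circ\mathbf s$ and $\Psi \leftrightarrow \sigma_i\Psi$ term by term. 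Summing back and using the identities $G_{\alpha_i}(\sigma_i\circ\mathbf s) = G_{-\alpha_i}(\mathbf s)$ and $\zeta_{\alpha_i}(\sigma_i\circ\mathbf s) = \zeta_{-\alpha_i}(\mathbf s)$ from \eqref{eqn-hoho}, and noting $\Phi(\sigma_i) = \{\alpha_i\}$, yields
\[ Z(\sigma_i\circ\mathbf s;\mathbf m,\sigma_i\Psi) \;=\; G(\sigma_i,\mathbf s)\, Z(\mathbf s;\mathbf m,\Psi). \]
Interchanging $\sum_{\hat{\mathbf c}}$ with the rank-one functional equation is justified on the common region of absolute convergence of both sides, and the identity then propagates by meromorphic continuation.

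For stage (ii), I pick a reduced expression $w = \sigma_{i_1}\cdots\sigma_{i_k}$ and iterate. The product defining $G(w,\mathbf s)$ satisfies a cocycle relation $G(ww',\mathbf s) = G(w, w'\circ\mathbf s)\,G(w',\mathbf s)$, mirroring the one for $j$ in Lemma \ref{lem-delta}(2); this can be checked by telescoping $\Phi(ww')$ as $\Phi(w') \cup (w')^{-1}\Phi(w)$ via \eqref{eqn-phi} and applying \eqref{eqn-hoho}. The iteration then yields the full functional equation $Z(w\circ\mathbf s;\mathbf m,w\Psi) = G(w,\mathbf s)\,Z(\mathbf s;\mathbf m,\Psi)$ and simultaneously supplies meromorphic continuation of $Z$ to a tube neighborhood of each $w\circ\mathfrak L$, with the only poles lying on $W$-translates of the hyperplanes inherited from Proposition \ref{prop-ultimate}.

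The main obstacle is stage (iii): in contrast to the finite case, adjacent chambers $\mathfrak L$ and $\sigma_i\circ\mathfrak L$ need not overlap, their union does not exhaust $\mathfrak h_{\mathbb R}$, and so Bochner's tube theorem cannot be applied naively. To create the overlap I would exploit the stronger fact that in Proposition \ref{prop-ultimate} the completed function $\tilde{\mathcal E}^*$ is meromorphic on all of $\mathfrak h$ with only two polar hyperplanes, vastly larger than the domain of absolute convergence of $\tilde{\mathcal E}$. For each $\hat{\mathbf c}$ this yields an analytic continuation across the wall $\alpha_i(\mathbf s) = 1$; the technical core is then to show that, on a thickened wedge straddling this wall, the sum $\sum_{\hat{\mathbf c}}$ of these continuations is still absolutely convergent, giving a genuine open overlap between the local meromorphic extensions obtained from the two chambers. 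Chaining such overlaps along galleries in the Coxeter complex of $W$ produces a connected open tube of holomorphy (away from the declared polar locus) whose base has convex hull equal to $\mathfrak X_0$, and Bochner's tube theorem then delivers the meromorphic continuation to all of $\mathfrak X_0 + i\mathfrak h_{\mathbb R}$. The restriction to the convex subcone $\mathfrak X_0$ of the shifted Tits cone, rather than to all of $\mathfrak h_{\mathbb R}$, reflects exactly the fact that $W$ is infinite in the Kac-Moody setting.
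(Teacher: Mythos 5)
Your overall architecture matches the paper's: reduce to the rank-one functional equation of Proposition \ref{prop-ultimate}, iterate via a cocycle identity for $G(w,\mathbf s)$ using \eqref{eqn-phi} and \eqref{eqn-hoho}, and patch with Bochner's tube theorem, with the restriction to $\mathfrak X_0$ coming from the infiniteness of $W$. You also correctly isolate the real difficulty: $\Lambda_0$ and $\sigma_i\circ\Lambda_0$ need not overlap, so Bochner cannot be applied directly. However, at precisely that point you write that ``the technical core is then to show that \dots\ the sum $\sum_{\hat{\mathbf c}}$ of these continuations is still absolutely convergent'' and offer no mechanism for doing so. That is the one genuinely hard step of the proof, and asserting it is not proving it: knowing that each individual $\tilde{\mathcal E}^*(\mathbf s,\hat{\mathbf c};\mathbf m,\Psi,i)$ continues to all of $\mathfrak h$ says nothing about summability over $\hat{\mathbf c}$ unless you control the growth of the continued functions \emph{uniformly in} $\hat{\mathbf c}$, and the functional equation \eqref{eqn-ff} introduces the factor $|Cm_i|^{1-s_i}$ with $C=\prod_{j\neq i}c_j^{-a_{ij}}$, which grows polynomially in the $|c_j|$ once $\mathfrak{Re}(s_i)<1$. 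The paper's mechanism is a Phragm\'en--Lindel\"of convexity argument applied to the pole-cleared series $\mathcal F(s_i,\hat{\mathbf c};\mathbf m,\Psi,i)=(s_i-1-m(\alpha_i)/n)(s_i-1+m(\alpha_i)/n)\mathcal E(s_i,\hat{\mathbf c};\mathbf m,\Psi,i)$: a bound $O\bigl(\prod_{j\neq i}|c_j|^{1+\log_2 r+\epsilon}\bigr)$ on the right edge $\mathfrak{Re}(s_i)=1+\log_2 r+\epsilon$ coming from absolute convergence of the full multiple sum, the transferred bound $O\bigl(\prod_{j\neq i}|c_j|^{-a_{ij}(1-\mathfrak{Re}(s_i))+1+\log_2 r+\epsilon}\bigr)$ on the left edge via \eqref{eqn-ff}, and interpolation on the strip in between. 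Only with the resulting uniform polynomial bound in $\hat{\mathbf c}$ does one get, for every $u\in\mathbb R$, a $u'$ such that $\sum_{\hat{\mathbf c}}$ converges absolutely on $A(u,u')=\{\mathbf s:\mathfrak{Re}(s_i)>u,\ \mathfrak{Re}(s_j)>u'\ (j\neq i)\}$, which is what produces the open overlap between $\Lambda_0$ and $\sigma_i\circ\Lambda_0$. Without some such estimate your ``thickened wedge'' has no content.

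A secondary omission: when you chain overlaps along galleries, the number of polar hyperplanes that must be cleared by linear factors grows without bound as $\ell(w)\to\infty$, so one cannot multiply $Z$ by a single finite product of linear factors valid on all of $\mathfrak X_0$. The paper handles this by observing that any fixed $\mathbf s$ with $\mathfrak{Re}(\mathbf s)\in\mathfrak X_0$ lies in a union of finitely many $W$-translates of the $\Lambda_i$, so only finitely many steps (hence finitely many linear factors) are needed to reach it; the continuation is local in this sense. It also notes that the growth estimate above is needed only at the first stage, since afterwards the enlarged regions already overlap all their relevant translates. Your proposal should make both points explicit.
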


\begin{proof} If $r=1$, we essentially have the Kubota's series and we are done in this case. Let us assume $r\geq 2$ from now on. It follows that $1+\log_2r\geq 2$.

The Dirichlet series defining $Z(\mathbf s; \mathbf m, \Psi)$ is absolutely convergent by Theorem \ref{thm-conv-final} in the region $\Lambda_0 = \{ \mathbf s \in \frak{h} \ | \ \frak{Re}(\mathbf s)  \in \frak{L} \}$.  We write
\begin{eqnarray}
Z(\mathbf s; \mathbf m, \Psi) &=& \sum_{\mathbf c} H(\mathbf c; \mathbf m) \Psi(\mathbf s; \mathbf c) |\mathbf c|_Q^{- \mathbf s} | \mathbf m|_P^{\mathbf s} \nonumber \\ &=& \sum_{\hat{\mathbf c} } \sum_{0 \neq c_i \in \frak o_{S}/\frak o_S^{\times} } H(\mathbf c; \mathbf m) \Psi(\mathbf s; \mathbf c) |\mathbf c|_Q^{- \mathbf s} | \mathbf m|_P^{\mathbf s} \nonumber \\ &=& |\mathbf m|_P^\mathbf s\sum_{\hat{\mathbf c} }  \frac{\mathcal{E}(s_i , \hat{\mathbf c}; \mathbf m, \Psi , i)}{\prod_{j\neq i}|c_j|^{s_j}}, \label{eqn-eee}
\end{eqnarray}
where $\hat{\mathbf c}$ is the $(r-1)$-tuple $(c_1, \cdots, \hat{c}_i , \cdots , c_r)$ with $c_i$ omitted. 

With the expression (\ref{eqn-eee}), let us first continue $Z(\mathbf s; \mathbf m,\Psi)$ to a larger region as follows. We consider the modified series 
\[Z_i(\mathbf s;\mathbf m,\Psi)=(s_i-1-m(\alpha_i)/n)(s_i-1+m(\alpha_i)/n)Z(\mathbf s;\mathbf m,\Psi), \text{ and }\]
\[\mathcal{F}(s_i,\hat{\mathbf c};\mathbf m, \Psi, i)=(s_i-1-m(\alpha_i)/n)(s_i-1+m(\alpha_i)/n)\mathcal{E}(s_i, \hat{\mathbf c};\mathbf m, \Psi, i).\]
By Proposition \ref{prop-ultimate}, $\mathcal{F}$ is analytic on $\mathbb C$ with a functional equation.
We claim that for any $\epsilon>0$, on the region with $\frak{Re}(s_i)\geq1+\log_2r+\epsilon$, we have
\[\mathcal{F}(s_i, \hat{\mathbf c};\mathbf m, \Psi, i)=O \left (\prod_{j\neq i}|c_j|^{1+\log_2r+\epsilon} \right ),\] where the implied constant is independent of $\hat{\mathbf c}$. Actually, since the multiple sum \[ \sum_{\mathbf c} H(\mathbf c; \mathbf m) \Psi(\mathbf s; \mathbf c) |\mathbf c|_Q^{- \mathbf s} | \mathbf m|_P^{\mathbf s} \] is absolutely convergent on the region with $\frak{Re}(s_j)\geq1+\log_2r+\epsilon$ ($1\leq j\leq r$), the general term 
\[  \frac{\mathcal{E}(s_i , \hat{\mathbf c}; \mathbf m, \Psi , i)}{\prod_{j\neq i}|c_j|^{s_j}}\] over $\hat{\mathbf c}$ in \eqref{eqn-eee} is then bounded and the claim follows by taking the infimum. 
By the functional equation, we have for $\frak{Re}(s_i)\leq 1-\log_2r-\epsilon$,
\[\mathcal{F}(s_i, \hat{\mathbf c};\mathbf m, \Psi, i)=O \left (|C|^{1-\frak{Re}(s_i)}\prod_{j\neq i}|c_j|^{1+\log_2r+\epsilon} \right )=O\left(\prod_{j\neq i}|c_j|^{-a_{ij}(1-\frak{Re}(s_i))+1+\log_2r+\epsilon}\right).\]
In particular, on the line $\frak{Re}(s_i)=1-\log_2r-\epsilon$ we have
\[\mathcal{F}(s_i, \hat{\mathbf c};\mathbf m, \Psi, i)=O\left(\prod_{j\neq i}|c_j|^{1+(1-a_{ij})(\log_2r+\epsilon)}\right).\] Now by Phragm\'{e}n-Lindel\"{o}f Theorem, on the strip defined by $1-\log_2r-\epsilon\leq \frak{Re}(s_i)\leq 1+\log_2r+\epsilon$, we have the bound
\[\mathcal{F}(s_i, \hat{\mathbf c};\mathbf m, \Psi, i)=O\left(\prod_{j\neq i}|c_j|^{1+(1-a_{ij})(\log_2r+\epsilon)}\right).\] Finally, by putting the above bounds into $Z_i$ and considering the expression (\ref{eqn-eee}), we then see that for any $u\in\mathbb R$, there exists a positive real number $u'$ such that the series $Z_i$ (over $\hat{\mathbf c}$) is absolutely convergent on the region
\[A(u,u')=\{\mathbf s\in\mathfrak h: \frak{Re}(\alpha_i(\mathbf s))>u, \ \frak{Re}(\alpha_j(\mathbf s))>u', j\neq i \}.\] Note here that the dependence on variables other than $s_i$'s in $|\mathbf m|_P^\mathbf{s}$ does not affect the absolute convergence. This provides the analytic continuation of $Z_i$ and hence the meromorphic continuation of $Z$, from $\Lambda_0$ to $\Lambda_0\cup A(u,u')$.

By the functional equations in Proposition \ref{prop-ultimate} again, $Z$ can be meromorphically continued to $\sigma_i\circ \Lambda_0$. Now we choose $u$ so that  $A(u,u')$ intersects both $\Lambda_0$ and $\sigma_i\circ\Lambda_0$. Since the union of these pieces is (simply) connected, the function $Z_i$ is analytic on the union of these regions. Applying Bochner's  Tube-domain Theorem, we see that we may continue $Z$ meromorphically to $\Lambda_i$, the convex hull of $\Lambda_0\cup\sigma_i \circ\Lambda_0$. On this region, we have, by Proposition \ref{prop-ultimate},
\begin{eqnarray*} G(\sigma_i, \mathbf s) Z(\mathbf s; \mathbf m, \Psi) &=& \frac 1 { G_{-\alpha_i}( \mathbf s) \zeta_{- \alpha_i}(\mathbf s) } \sum_{\hat{\mathbf c} }  \mathcal{\tilde{E}}^*(\mathbf s , \hat{\mathbf c}; \mathbf m, \Psi , i)  \\ &=& \frac 1 { G_{\alpha_i}( \sigma_i \circ \mathbf s) \zeta_{\alpha_i}(\sigma_i \circ \mathbf s)} \sum_{\hat{\mathbf c} }  \mathcal{\tilde{E}}^*(\sigma_i \circ \mathbf s , \hat{\mathbf c}; \mathbf m, \sigma_i \Psi , i) \\ &=& \sum_{\hat{\mathbf c} }  \mathcal{\tilde{E}}(\sigma_i \circ \mathbf s , \hat{\mathbf c}; \mathbf m, \sigma_i \Psi , i) \\ &=& Z( \sigma_i \circ \mathbf s, ; \mathbf{m}, \sigma_i \Psi ).
\end{eqnarray*}

Now we extend the meromorphic continuation to $\frak{Re}(\mathbf s) \in \frak{X}_0$. The argument is similar to that of the finite case (\cite{BBCFH,BBF}), though we need to make some modifications. If $\Lambda_1=\mathfrak h$ or $\Lambda_2=\mathfrak h$, we are done. If not, we consider $\Lambda_1\cup\Lambda_2$. It is easy to see that the union is simply connected since both are convex. Because they intersect nontrivially, $Z$ is continued to the union. We multiply $Z$ by linear factors to eliminate the possible poles on such a union, and then apply Bochner's Theorem. Then consider $\Lambda_3$ and the union $\Lambda_1\cup\Lambda_2\cup\Lambda_3$ similarly. At the end, either we stop somewhere and we have continuation to $\mathfrak h$ or we have continuation to the convex hull, say $\Lambda_0^{(1)}$, of $\cup_i\Lambda_i$. In the latter case, repeat the whole procedure from the beginning by replacing $\Lambda_0$ by $\Lambda_0^{(1)}$, except that all the translates of $\Lambda_0^{(1)}$ intersect $\Lambda_0^{(1)}$ non-trivially and we do not need to do the growth estimate of the $\mathcal{E}$-series. 

Such a process may not stop in finitely many steps and it is clear that we need more and more linear factors to eliminate the poles of $Z$ as we carry out this procedure. However, since any $\mathbf s \in \frak h$ with $\frak{Re}(\mathbf s) \in \frak{X}_0$ is contained in a union of a finite number of $W$-translates of $\Lambda_i$'s, we need at most finitely many steps to continue $Z$ to $\mathbf s$, therefore at most finitely many linear factors in such a continuation.

The functional equation
$Z(w \circ \mathbf s; \mathbf m, w \Psi) = G(w, \mathbf s)Z(\mathbf s; \mathbf m, \Psi)$ for each $w \in W$ can be proved using (\ref{eqn-phi}), (\ref{eqn-hoho}) and induction, along with the continuation. The possible polar hyperplanes are clear from our continuation and the poles of Kubota's series. We are done with the proof.
 \end{proof}

\begin{Exa}
Let us consider the hyperbolic Kac-Moody root system associated with the generalized Cartan matrix $A= \begin{pmatrix} 2 & -3 \\ -3 & 2 \end{pmatrix}$. Then the Weyl group $W$ is the free group generated by the simple reflections $\sigma_1$ and $\sigma_2$. As in the proof of Theorem \ref{thm-main-final}, we let $\Lambda_i$  be the convex hull of $\Lambda_0 \cup ( \sigma_i \circ \Lambda_0)$ for $i=1, 2$. Then the projections on the real plane of the regions $\Lambda_1$ and $\Lambda_2$ are given by
\[ \Lambda_1 : \qquad 3 x + y > 5, \quad 3 x+2 y > 10, \quad y > 2 \] and \[ \Lambda_2 : \qquad x +3  y > 5, \quad 2 x+3 y > 10, \quad x > 2, \] where $x= \frak {Re}(s_1)$ and $y=\frak{Re}(s_2)$. The shifted Tits cone $\frak X$ is given by
\[ \frak X : \qquad  (3+\sqrt 5) x + 2 y > 5+\sqrt 5 , \quad  (3-\sqrt 5) x + 2 y > 5-\sqrt 5.\] On the other hand, the boundary of the convex hull $\frak X_0$ is the piecewise linear curve connecting the points in each column of the following: \[ \begin{array} {lcl} (2,2)&  \phantom{LLLL}&(2,2) \\ (0,5) = \sigma_1 \circ (2,2)  &  & (5,0) = \sigma_2 \circ (2,2)  \\ (-3, 12) = \sigma_1 \sigma_2 \circ (2,2) & &(12, -3) = \sigma_2 \sigma_1 \circ (2,2) \\ (-10, 30) =  \sigma_1 \sigma_2 \sigma_1 \circ (2,2) &  &(30, -10) =  \sigma_2 \sigma_1 \sigma_2 \circ (2,2) \\ \hskip 1 cm  \vdots & & \hskip 1 cm \vdots
\end{array} \]
In particular, $(3/2, 3/2) \in \frak X \setminus \frak X_0$.

\end{Exa}

\begin{Rmk}
One method to investigate moment problem of Dirichlet $L$-functions $L(s,\chi_d)$ with quadratic characters over $\mathbb Q$ in $d$-aspect is to consider the multiple Dirichlet series of the form
\[\sum_d\frac{L(s_1,\chi_d)L(s_2,\chi_d)\cdots L(s_r,\chi_d)}{|d|^{s_{r+1}}}.\]
Sufficient meromorphic continuation of this series will produce asymptotic formulas for the $r$-th moment. However, only in the cases $r=1,2,3$, necessary continuation has been obtained. Actually, in those cases the series can be continued to the whole $\mathbb C^r$. If $r>3$, the group of functional equations becomes infinite, and we are not able to continue the series sufficiently at the present. For more details, see Bump's survey paper \cite{Bump}.

Remarkably, in a recent paper \cite{BD}, Bucur and Diaconu considered quadratic Dirichlet $L$-functions over rational function fields in the case $r=4$, and managed to continue the corresponding multiple Dirichlet series sufficiently.  This is the first such result where the group of functional equations is infinite.

More specifically, they consider two multiple Dirichlet series, one from the moment problem and the other from the corresponding Weyl group action. Since the latter can be sufficiently continued, by proving a uniqueness theorem (up to a single variable power series), they prove sufficient continuation of the multiple Dirichlet series from the moment problem.

Their Weyl group multiple Dirichlet series are constructed in a similar way with those considered in this paper. In their case, the root system is of affine type $D_4^{(1)}$, and by identifying $s_i$ with $\alpha_i$,
they put $\textbf s =(s_1, \dots , s_r)$ and
\begin{equation} \label{DDeF} \Delta(\textbf s)=\prod_{\alpha\in\Phi^{\mathrm{re}}_+}(1-q^{d(\alpha)-\alpha}) \quad \text{ and } \quad D(\textbf s)=\prod_{\alpha\in\Phi^{\mathrm{re}}_+}(1-q^{d(\alpha)-\alpha+1}). \end{equation}  Here $q$ is the order of the finite field, and we have changed their notations to be consistent with ours. 
One can compare \eqref{DDeF} with \eqref{DDe}.
Moreover, they define a matrix-valued  rational function $M(w,\textbf s)$ that satisfies the same cocycle relation as $j(w, \mathbf x)$ in this paper.
Then they construct the matrix series
\[Z(\mathbf s)=\sum_{w\in W}\frac{D(w\mathbf s)}{\Delta(w\mathbf s)}M(w,\mathbf s),\]
and proved its analytic continuation and functional equations, namely
\[Z(\mathbf s)=M(w,\mathbf s)Z(w\mathbf s), \text{\  \  for any } w\in W.\]

In general, over number fields, the multiple Dirichlet series obtained from the moment problem  are expected to be different from their counterparts constructed from the Weyl group actions. In the affine case $D_4^{(1)}$, they are expected to differ by a normalizing factor. In the indefinite case, the situation may be much more complicated.

\end{Rmk}

\end{document}